\newcommand\robout{\bgroup\markoverwith {\textcolor{blue}{\rule[0.5ex]{2pt}{0.4pt}}}\ULon} 
\newtheorem{theorem}{Theorem}[subsection]
\newtheorem{lemma}[theorem]{Lemma}
\newtheorem{proposition}[theorem]{Proposition}
\newtheorem{corollary}[theorem]{Corollary}
\newtheorem{defn}[theorem]{Definition}
\theoremstyle{remark}
\newtheorem{remark}[theorem]{Remark}
\newcommand{\mylabel}[2]{#2\def\@currentlabel{#2}\label{#1}}
\newcommand{\Gal}{\operatorname{Gal}}
\newcommand{\DD}{\mathbb{D}}
\newcommand{\NN}{\mathbb{N}}
\newcommand{\QQ}{\mathbb{Q}}
\newcommand{\Qp}{\mathbb{Q}_p}
\newcommand{\Zp}{\mathbb{Z}_p}
\newcommand{\ZZ}{\mathbb{Z}}
\newcommand{\FF}{\mathbb{F}}
\newcommand{\g}{\underline{\mathbf{g}}}
\newcommand{\ord}{\mathrm{ord}}
\newcommand{\fp}{\mathfrak{p}}
\newcommand{\cO}{\mathcal{O}}
\newcommand{\Iw}{\mathrm{Iw}}
\newcommand{\GL}{\mathrm{GL}}
\newcommand{\cyc}{\textup{cyc}}
\newcommand{\ubeta}{\bbbeta}
\newcommand{\scrA}{\mathscr{A}}
\newcommand{\DdagrigA}{\mathbb{D}^\dagger_{\textup{rig},\mathscr{A}}}
\newcommand{\DdagrigAA}{\mathbb{D}^\dagger_{\textup{rig},{A}}}
\newcommand{\Ddagrig}{\mathbb{D}^\dagger_{\textup{rig}}}
\newcommand{\LL}{\Lambda}
\newcommand{\RR}{\mathcal{R}}
\newcommand{\f}{\underline{\textup{\bf f}}}
\newcommand{\lra}{\longrightarrow}
\newcommand{\ra}{\rightarrow}
\newcommand{\res}{\textup{res}}
\newcommand{\fP}{\mathfrak{P}}
\newcommand{\Dcris}{\mathbb{D}_{\rm cris}}
\newcommand{\p}{\mathfrak{p}}
\newcommand{\q}{\mathfrak{q}}
\newcommand{\m}{\mathfrak{m}}
\newcommand{\Be}{}
\newcommand{\RS}{{\rm RS}}
\newcommand{\naive}{{\rm naive}}
\begin{document}

\title[$p$-adic Gross--Zagier at Critical Slope]
{$p$-adic Gross--Zagier formula at critical slope and a conjecture of Perrin-Riou}

\author[K. B\"uy\"ukboduk]{K\^az\i m B\"uy\"ukboduk}
\address[B\"uy\"ukboduk]{UCD School of Mathematics and Statistics\\ University College Dublin\\ Ireland}
\email{kazim.buyukboduk@ucd.ie}

\author{Robert Pollack}
\address[Pollack]{Department of Mathematics and Statistics, Boston University  \\
111 Cummington Mall, Boston, MA 02215 USA\\ }
\email{rpollack@math.bu.edu}

\author{Shu Sasaki}
\address[Sasaki]{Fakult\"at f\"ur Mathematik, Universit\"at Duisburg-Essen, Thea-Leymann Str. 9, 45127, Essen, Germany}
\email{s.sasaki.03@cantab.net}
\keywords{Heegner cycles, Families of $p$-adic modular forms, Birch and Swinnerton-Dyer conjecture}

\begin{abstract}
Let $p$ be an odd prime. Given an imaginary quadratic field $K=\QQ(\sqrt{-D_K})$ where $p$ splits with $D_K>3$, and a $p$-ordinary newform $f \in S_k(\Gamma_0(N))$ such that $N$ verifies the Heegner hypothesis relative to $K$, we prove a $p$-adic Gross--Zagier formula for the critical slope $p$-stabilization of $f$ (assuming that it is non-$\theta$-critical). In the particular case when $f=f_A$ is the newform of weight $2$ associated to an elliptic curve $A$ that has good ordinary reduction at $p$, this allows us to verify a conjecture of Perrin-Riou. The $p$-adic Gross--Zagier formula we prove has applications also towards the Birch and Swinnerton-Dyer formula for elliptic curves of analytic rank one. 
\end{abstract}
\maketitle
\tableofcontents

\section{Introduction}
\label{sec:Intro}
Fix forever an odd prime $p$ as well as embeddings $\iota_\infty:\overline{\QQ}\hookrightarrow \mathbb{C}$ and $\iota_p: \overline{\QQ}\hookrightarrow \overline{\QQ}_p$. Let $N$ be an integer coprime to $p$. We let $v_p$ denote the valuation on $\overline{\QQ}_p$, normalized so that $v_p(p)=1$. 

Let $f=\underset{n=1}{\overset{\infty}\sum} a_nq^n \in S_k(\Gamma_0(N))$ be a normalized newform of even  weight $k\geq 2$ and level $N\geq 3$. Let $K_f:=\iota_\infty^{-1}(\QQ(\cdots,a_n,\cdots))$ denote the Hecke field of $f$ and $\mathfrak{P}$ the prime of $K_f$ induced by the embedding $\iota_p$. Let $E$ denote an extension of $\QQ_p$ that contains $\iota_p(K_f)$. We shall assume that $v_p(\iota_p(a_p))=0$, namely that $f$ is $\fP$-ordinary. Let $\alpha,\beta\in \overline{\QQ}$ denote the roots of the Hecke polynomial $X^2-\iota_{\infty}^{-1}(a_p)X+p^{k-1}$ of $f$ at $p$. We assume that $E$ is large enough to contain both $\iota_p(\alpha)$ and $\iota_p(\beta)$. Since we assume that $f$ is $\fP$-ordinary, precisely one of $\iota_p(\alpha)$ and $\iota_p(\beta)$ (say, without loss of generality, $\iota_p(\alpha)$) is a $p$-adic unit. Then $v_p(\iota_p(\beta))=k-1$. To ease our notation, we will omit $\iota_p$ and $\iota_\infty$ from our notation unless there is a danger of confusion.

The $p$-stabilization $f^\alpha \in S_k(\Gamma_0(Np))$ of $f$ is called the ordinary stabilization and $f^\beta$ is called the critical-slope $p$-stabilization. We shall assume throughout that $f^\beta$ is not $\theta$-critical $($in the sense of \cite{bellaichepadicL}, Definition 2.12$)$.

Our main goal in the current article is to prove a $p$-adic Gross--Zagier formula for the critical-slope $p$-stabilization $f^\beta$. This is Theorem~\ref{thm:padicGZoverKIntro}. In the particular case when $f$ has weight $2$ and it is associated to an elliptic curve $A_{/{\QQ}}$, this result allows us to prove a conjecture of Perrin-Riou. This is recorded below as Theorem~\ref{thm:PRconjectureIntro}; it can be also translated into the statement of Theorem~\ref{thm:PRRubinFormula}, which is an explicit construction of a point of infinite order in $A(\QQ)$ in terms of the two $p$-adic $L$-functions associated to $f_A$ (under the assumption that $A$ has analytic rank one, of course). As a byproduct of Theorem~\ref{thm:padicGZoverKIntro}, we may also deduce that at least one of the two $p$-adic height pairings associated to $A$ is non-degenerate, still assuming that $A$ has analytic rank one. This fact yields the proof of the $p$-part of the Birch and Swinnerton-Dyer formula\footnote{Under the additional hypothesis that $A$ be semistable, this has been proved in \cite{BertiBertoliniVenerucci,JetchevSkinnerWan,ZhangBSD} using different techniques. We do not need to assume that $A$ is semistable.} for $A$; this is Theorem~\ref{thm:BSDformulainrankoneIntro} below.

Before we discuss these results in detail, we will introduce more notation. Let $S$ denote the set consisting of all rational primes dividing $Np$ together with the archimedean place. We let $W_f$ denote Deligne's (cohomological) $p$-adic representation associated to $f$ (so that the Hodge--Tate weights of $W_f$ are $(1-k,0)$, with the convention that the Hodge--Tate weight of the cyclotomic character is $+1$). Set $V_f=W_f(k/2)$; we call $V_f$ the central critical twist of $W_f$. Both $W_f$ and $V_f$ are unramified outside $S$ and they are crystalline at $p$. 

Let  $\Dcris (V_f)$ denote the crystalline Dieudonn\'e module and $\mathbb{D}^\dagger_{\textup{rig}}(V_f)$ Fontaine's (\'etale) $(\varphi,\Gamma_\cyc)$-module associated to $V_f\vert_{G_{\QQ_p}}$. We let $D_\alpha, D_\beta$ denote the eigenspaces of $\Dcris (V_f)$ for the action of the crystalline Frobenius $\varphi$; so that $\varphi\vert_{D_\alpha}=p^{-k/2}\alpha$ and $\varphi\vert_{D_\beta}=p^{-k/2}\beta$.

Let $K=\QQ(\sqrt{-D_K})$ be an imaginary quadratic field (where $D_K>3$ is a square-free integer congruent to $3$ modulo $4$) and let $H^1_{\rm f}(K,V_f)$ denote the Bloch-Kato Selmer group associated to $V_f$ over $K$. For each $\lambda\in \{\alpha,\beta\}$ the submodule $D_\lambda\subset \Dcris (V_f)$ defines a canonical splitting of the Hodge filtration on $\Dcris (V_f)$, namely that we have 
\[
\Dcris (V_f)=D_\lambda\oplus {\rm Fil}^0\Dcris (V_f).
\]
as $E$-vector spaces. Note that our assumption that $f^\beta$ is non-$\theta$-critical is necessary to ensure this splitting when $\lambda=\beta$ (see \cite{bellaichepadicL}, Proposition 2.11(iv)). Associated to this splitting and the cyclotomic $\ZZ_p$-extension of $K$, Nekov\'a\v r in \cite{Ne92} defined a $p$-adic height pairing
\[
h^{\textup{Nek}}_{\lambda,K}\,:\,H^1_{\textup{f}}(K,V_f)\times H^1_{\textup{f}}(K,V_f)\lra E\,.
\]
Suppose that the prime $p$ splits in $K$ and write $(p)=\p\p^c$. Assume also that $K$ verifies the Heegner hypothesis relative to $N$. Let $\epsilon_K$ denote the quadratic Dirichlet character associated to $K/\QQ$. The Heegner hypothesis ensures that $\ord_{s=\frac{k}{2}}L(f_{/K},s)$ is odd and there exists a Heegner cycle $z_f \in {H}_\textup{f}^1(K,V_f)$. 

\subsection{Results}
\label{subsec:results}
Let $L_{p,\beta}^{\rm Kob}(f_{/K},s)$ be the {$p$-adic $L$-function} given as in (\ref{DEF_TwoVarAmiceTransforms}). It is the critical slope counterpart of Nekov\'a\v{r}'s $p$-adic $L$-function associated to the $p$-ordinary stabilization $f^\alpha$. It follows from the interpolation property of $L_{p,\beta}^{\rm Kob}(f_{/K},s)$ that $L_{p,\beta}^{\rm Kob}(f_{/K},\frac{k}{2})=0$. As its predecessors, our $p$-adic Gross--Zagier formula expresses the first derivative of $L_{p,\beta}^{\rm Kob}(f_{/K},s)$ in terms of the $p$-adic height of the Heegner cycle $z_f$:
\begin{theorem}
\label{thm:padicGZoverKIntro}
Let $f\in S_{k}(\Gamma_0(N))$ be a newform with $N\geq 3$. Suppose $f$ is $p$-ordinary with respect to the embedding $\iota_p$ and let $f^\beta$ denote its critical-slope $p$-stabilization $($of slope $v_p(\beta)=k-1$$)$. Assume also that $f^\beta$ is not $\theta$-critical. Let $K=\QQ(\sqrt{-D_K})$ be an imaginary quadratic field where the prime $p$ splits and that satisfies the Heegner hypothesis relative to $N$. Then,
$$\displaystyle{\frac{d}{ds}L_{p,\beta}^{\rm Kob}(f_{/K},s)\big{|}_{s=\frac{k}{2}}={\left(1-\frac{p^{\frac{k}{2}-1}}{\beta}\right)^{4}}\cdot  \frac{h^{\textup{Nek}}_{\beta,K}(z_f,z_f)}{(4|D_K|)^{\frac{k}{2}-1}}}.$$
\end{theorem}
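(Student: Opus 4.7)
My plan is to establish Theorem~\ref{thm:padicGZoverKIntro} via a deformation argument along a Coleman family passing through $f^\beta$. The non-$\theta$-critical hypothesis, together with Bellaïche's work on the eigencurve, ensures that $f^\beta$ is a smooth classical point of the Coleman--Mazur eigencurve, and hence interpolates in a unique Coleman family $\underline{\mathbf{f}}$ over some affinoid weight neighbourhood $U$. The slope in this family is constant equal to $v_p(\beta)=k-1$, so every other classical specialization of weight $k'>k$ has non-critical slope $k-1<k'-1$, and therefore (by Coleman's classicality theorem) corresponds to a classical $p$-stabilized eigenform for which a $p$-adic Gross--Zagier formula is available through the work of Nekov\'a\v r, Shnidman, Kobayashi and their subsequent extensions.

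The first step is to interpolate the left hand side. Using the overconvergent modular symbol attached to $\underline{\mathbf{f}}$ (following Pollack--Stevens and Bellaïche), I would construct a two-variable Kobayashi-style $p$-adic $L$-function $\mathbf{L}_p^{\rm Kob}(\underline{\mathbf{f}}_{/K})$ on $U\times\mathcal{W}_{\cyc}$ specializing to $L_{p,\beta}^{\rm Kob}(f_{/K},s)$ at $f^\beta$ and to the analogous $p$-adic $L$-functions at the other classical points of $U$. Its cyclotomic derivative along the central critical line then defines a rigid analytic function on $U$ restricting to $\frac{d}{ds}L_{p,\beta}^{\rm Kob}(f_{/K},s)\big|_{s=k/2}$ at $f^\beta$.

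The second step is to interpolate the right hand side. I would construct a big Heegner class $\mathbf{z}_{\underline{\mathbf{f}}}$ in a suitable Selmer group of $\underline{\mathbf{f}}$ following Howard's big Heegner points and their higher-weight generalization by Castella--Hsieh, whose specialization at each classical point of $U$ recovers (up to an explicit factor) the associated Heegner cycle. Paired with this is a big Nekov\'a\v r-style $p$-adic height pairing $\mathbf{h}_{\underline{\mathbf{f}},K}^{\rm Nek}$, built from the Selmer-complex formalism applied to the rank-one subobject $\mathbf{D}_\beta\subset\Ddagrig(\mathbf{V}_{\underline{\mathbf{f}}})$ coming from the triangulation of the Coleman family. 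The non-$\theta$-critical assumption guarantees that this triangulation specializes at $f^\beta$ to the splitting $\Dcris(V_f)=D_\beta\oplus\Fil^0\Dcris(V_f)$ used to define $h^{\textup{Nek}}_{\beta,K}$.

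At the classical non-critical specializations, the two rigid analytic functions on $U$ agree by the cited $p$-adic Gross--Zagier formulas; since such points are Zariski-dense, equality propagates to all of $U$, and specialization at $f^\beta$ yields the theorem. The Euler factor $\left(1-p^{k/2-1}/\beta\right)^{4}$ arises from the interpolation property of the big $p$-adic $L$-function and from comparing the integral Heegner cycle to its family-theoretic deformation. The hard part will be Step 2: showing that Nekov\'a\v r's height pairing varies rigid analytically across $U$ and specializes correctly at $f^\beta$, which requires a careful Iwasawa-theoretic analysis of the Selmer complexes, verification that the local conditions at $\p$ and $\p^c$ determined by $\mathbf{D}_\beta$ interpolate Bloch--Kato's at classical points, and a check that local duality for $(\varphi,\Gamma)$-modules over the family introduces no extra factors beyond the expected Euler factor.
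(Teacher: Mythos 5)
Your overall strategy -- deform along a Coleman family through $f^\beta$, interpolate both sides of the formula, invoke Kobayashi's non-critical GZ theorem at classical specializations, and conclude by density -- matches the paper's approach. However, there is a genuine gap at the heart of your Step~1 that propagates through Step~2: you propose to construct ``a two-variable Kobayashi-style $p$-adic $L$-function $\mathbf{L}_p^{\rm Kob}(\underline{\mathbf{f}}_{/K})$ specializing to $L_{p,\beta}^{\rm Kob}(f_{/K},s)$ at $f^\beta$ and to the analogous $p$-adic $L$-functions at the other classical points,'' but no such object exists. As explained in Remark~\ref{REM_KobayashiNekDoesNotInterpolate}, the functions $L_{p,\ubeta(\kappa)}^{\rm Kob}(\f(\kappa)^\circ_{/K},s)$ do \emph{not} interpolate in $\kappa$, because the multiplier $\ubeta(\kappa)\lambda_N(\f(\kappa)^\circ)\,\mathcal{E}(\f(\kappa))\,\mathcal{E}^*(\f(\kappa))$ relating $L_{p,\ubeta(\kappa)}^{\rm Kob}$ to the genuinely interpolating Loeffler--Zerbes Rankin--Selberg $p$-adic $L$-function $L_p^{\RS}(\f_{/K},\kappa,s)$ does not vary rigid-analytically. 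The modular-symbol construction you invoke produces the (interpolating) na\"{i}ve/Rankin--Selberg object, not the Kobayashi object; the derivative of your interpolated $L$-function at a classical weight is therefore \emph{not} the left-hand side of Kobayashi's Theorem~\ref{thm:noncriticalGZhigherweight}, and your proposed density argument does not close.

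The same non-interpolating factor reappears -- and this is the crucial compensating mechanism your proposal misses -- on the height side. Benois' Selmer-complex height $\frak{h}_{g,b,K}$ built from the triangulation's rank-one sub $\mathbb D_{\ubeta}$ (which is indeed what interpolates) does \emph{not} specialize to Nekov\'a\v{r}'s $h^{\rm Nek}_{b,K}$; by Proposition~\ref{prop:comparisionofheightswithouttrivialzeros} it equals $h^{\rm Nek}_{b,K}$ divided by the \emph{same} factor $b\,\lambda_N(g_\circ)\,\mathcal{E}(g)\,\mathcal{E}^*(g)$, arising from the discrepancy between the Poincar\'e duality pairing at level $\Gamma_0(N)$ and the $p$-stabilized pairing at level $\Gamma_0(N)\cap\Gamma_1(p)$ (Proposition~\ref{prop:comparisionoftwopoincarepairings}). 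It is precisely the cancellation of these two non-interpolating factors that makes the two sides of Kobayashi's formula interpolate simultaneously (Corollary~\ref{cor:padicGZoverKThatInterpolates}) and hence makes the density argument work. Without this bookkeeping your argument breaks exactly where you flag it as ``the hard part.'' A smaller point: Howard's and Castella--Hsieh's constructions are in the ordinary setting; for a Coleman family at critical slope one needs the universal Heegner classes of B\"uy\"ukboduk--Lei and Jetchev--Loeffler--Zerbes as in Theorem~\ref{thm:bigheegnermain}, whose interpolation factor $(1-p^{\kappa/2-1}/\ubeta(\kappa))^{2}$ is indeed the source of the quartic Euler factor you anticipated, once squared by the bilinearity of the height pairing.
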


This theorem is proved (in \S\ref{subsec_proof_of_Cor_thm_padicGZoverKIntro}) by appealing to the existence of $p$-adic families of finite slope modular forms, which allows us, using the existence of a suitable two-variable $p$-adic $L$-function\footnote{The construction of this $p$-adic $L$-function follows from the work of Loeffler~\cite{LoefflerpadicRankinSelberg} and Loeffler-Zerbes~\cite{LZ1}.} and Theorem~\ref{thm:bigheegnermain}, to reduce to the case of non-critical slope. Such a non-critical slope result is precisely Theorem~\ref{thm:noncriticalGZhigherweight} ($p$-adic Gross-Zagier formula for non-ordinary eigenforms of arbitrary weight; which is work in progress by Kobayashi~\cite{kobayashi_higherweight_nonord_GZ}). More precisely, Kobayashi's method only establishes a $p$-adic Gross--Zagier formula in the non-ordinary case for one of the two $p$-stabilization of a given form (the one of smaller slope).  However, this result is sufficient for our method and moreover, our method not only yields the Gross--Zagier formula in the case of critical slope, but also allows us to handle the case of the other non-ordinary $p$-stabilization.  See Theorem~\ref{THM_GeneralizeKobayashiToBothPadicLFunctions} for an even more general statement.

 \subsubsection{Abelian varieties of $\GL_2$-type}\label{subsubsec_abvarGL2type}
 We assume until the end of this introduction that $f$ has weight $2$. Let $A_f/\QQ$ denote the abelian variety of $\GL_2$-type that the Eichler-Shimura congruences associate to $f$. This means that there exists an order $\cO_f\subset K_f$ and an embedding $\cO_f\hookrightarrow {\rm End}_\QQ(A_f)$. We shall assume that $\ord_{s=1}L(f_{/\QQ},1)=1$ and we choose $K$ (relying on \cite{BumpFriedbergHoffstein}) in a way to ensure that $\ord_{s=1}L(f_{/K},1)=1$ as well. In this scenario, the element $z_f \in H^1(\QQ,V_f)$ is obtained as the Kummer image of the $f$-isotypical component $P_f$ of a \emph{Heegner point}\footnote{More precisely, $z\in J_0(N)(K)$ is given as the trace of a Heegner point $y\in J_0(N)(H_K)$ which is defined over the Hilbert class field $H_K$ of $K$. Our restriction on the sign of the functional equation (for the Hecke $L$-function of $f$) shows that $P_f\in  A_f(\QQ)\otimes K_f$.} $z\in J_0(N)(K)$. Here, $J_0(N)$ is the Jacobian variety of the modular curve $X_0(N)$ over $\mathbb{Q}$ and we endow it with the canonical principal polarization induced by the intersection form on $H^1(X_0(N),\ZZ)$. This equips $A_f$ with a canonical polarization as well.
 
 We let $\langle \,,\,\rangle_{\infty}^{J_0(N)}$ denote the N\'eron-Tate height pairing on the abelian variety $J_0(N)$. Nekov\'a\v{r}'s constructions in \cite{Ne92} give rise to a pair of $E$-equivariant  $p$-adic height pairings
\[
h^{\textup{Nek}}_{\lambda,\QQ}\,:\,(A_f(\QQ)\otimes_{\cO_f}E)\times (A_f(\QQ)\otimes_{\cO_f}E)\lra E
\]
for each $\lambda=\alpha, \beta$. We set
$$c(f):=-\frac{L^\prime(f_{/\QQ},1)}{\langle P_f,P_f\rangle_{\infty}^{J_0(N)}\,\Omega_f^+}\in K_f^\times$$
where $\Omega_f^+$ is a choice of Shimura's period. We note that $K_f$-rationality of $c(f)$ is proved in \cite{GrossZagier1986}.

\begin{corollary}
\label{cor:padicGZoverQIntro} In addition to the hypotheses of Theorem~\ref{thm:padicGZoverKIntro}, suppose that $k=2$ and $\ord_{s=1}L(f_{/\QQ},1)=1$. Then for $P_f\in {A_f(\QQ)\otimes_{\cO_f} E}$ and $c(f)\in K_f^\times$ as in the previous paragraph we have
$$L_{p,\beta}^\prime(f_{/\QQ},1)= \left(1-{1}/{\beta}\right)^2c(f)\,  h^{\textup{Nek}}_{\beta,\QQ}(P_f,P_f).$$
\end{corollary}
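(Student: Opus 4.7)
The plan is to derive the formula over $\QQ$ from Theorem~\ref{thm:padicGZoverKIntro} by factoring the $p$-adic $L$-function over $K$ and descending the height pairing. First I would establish a factorization
\[
L_{p,\beta}^{\rm Kob}(f_{/K},s) \;=\; L_{p,\beta}(f_{/\QQ},s)\cdot L_{p,\beta}(f_{/\QQ}\otimes\epsilon_K,s),
\]
up to an elementary scalar depending on $|D_K|$ and the chosen periods, mirroring the complex factorization $L(f_{/K},s)=L(f_{/\QQ},s)L(f_{/\QQ}\otimes\epsilon_K,s)$ on the interpolation range. Because $\ord_{s=1}L(f_{/\QQ},s)=1$ and $L(f_{/\QQ}\otimes\epsilon_K,1)\neq 0$ by our choice of $K$, the same vanishing/non-vanishing transfers to the $p$-adic sides, so differentiating at $s=1$ gives
\[
\tfrac{d}{ds}L_{p,\beta}^{\rm Kob}(f_{/K},s)\big|_{s=1} \;=\; L_{p,\beta}'(f_{/\QQ},1)\cdot L_{p,\beta}(f_{/\QQ}\otimes\epsilon_K,1).
\]

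Next I would evaluate $L_{p,\beta}(f_{/\QQ}\otimes\epsilon_K,1)$ via its interpolation property: since $p$ splits in $K$ one has $\epsilon_K(p)=1$, so the missing Euler factor at $p$ contributes a clean $(1-1/\beta)^{2}$ multiplied by a ratio $L(f_{/\QQ}\otimes\epsilon_K,1)/\Omega_f^{-}$ (times elementary constants). Theorem~\ref{thm:padicGZoverKIntro} specialized to $k=2$ trivializes the powers $p^{k/2-1}$ and $(4|D_K|)^{k/2-1}$, leaving $(1-1/\beta)^{4}\, h^{\rm Nek}_{\beta,K}(z_f,z_f)$ on its right-hand side. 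Equating these two expressions for the derivative and dividing by $L_{p,\beta}(f_{/\QQ}\otimes\epsilon_K,1)$ absorbs one factor $(1-1/\beta)^{2}$ and leaves $L_{p,\beta}'(f_{/\QQ},1)$ expressed as $(1-1/\beta)^{2}$ times a quotient of $h^{\rm Nek}_{\beta,K}(z_f,z_f)$ by the twisted complex $L$-value.

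Two compatibilities then bring the right hand side into the stated form. Since the sign assumptions force $P_f\in A_f(\QQ)\otimes_{\cO_f}E$, the class $z_f$ is the restriction to $G_K$ of the Kummer image of $P_f$ in $H^1_{\rm f}(\QQ,V_f)$; the compatibility of Nekov\'a\v{r}'s height pairing under base change then yields $h^{\rm Nek}_{\beta,K}(z_f,z_f)=2\,h^{\rm Nek}_{\beta,\QQ}(P_f,P_f)$, analogously to the N\'eron--Tate situation and using that $p$ splits in $K$. Secondly, the classical Gross--Zagier formula \cite{GrossZagier1986} gives $L'(f_{/K},1)=C_K\,\langle P_f,P_f\rangle_{\infty}^{J_0(N)}$ for an explicit constant $C_K$ depending on $|D_K|$, $\|f\|^{2}$ and $\Omega_f^{\pm}$; combined with $L'(f_{/K},1)=L'(f_{/\QQ},1)L(f_{/\QQ}\otimes\epsilon_K,1)$ and the definition of $c(f)$, this trades $L(f_{/\QQ}\otimes\epsilon_K,1)$ for $c(f)$, $\langle P_f,P_f\rangle_\infty^{J_0(N)}$, $L'(f_{/\QQ},1)$ and the period ratio.

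Assembling these inputs, the auxiliary factor $L_{p,\beta}(f_{/\QQ}\otimes\epsilon_K,1)$ and the N\'eron--Tate height $\langle P_f,P_f\rangle_\infty^{J_0(N)}$ cancel, the various period factors match, and the resulting identity is precisely
\[
L_{p,\beta}'(f_{/\QQ},1) \;=\; (1-1/\beta)^{2}\,c(f)\,h^{\rm Nek}_{\beta,\QQ}(P_f,P_f).
\]
The hard part, I expect, is purely bookkeeping: matching every constant in the factorization of the first step and in the twisted interpolation formula of the second step (powers of $2\pi i$, of $|D_K|$, and the distinction between $\Omega_f^{+}$ and $\Omega_f^{-}$) with the classical Gross--Zagier normalization, and verifying the factor-of-two comparison between $h^{\rm Nek}_{\beta,K}$ and $h^{\rm Nek}_{\beta,\QQ}$ directly within Nekov\'a\v{r}'s framework, rather than by transporting the corresponding archimedean statement.
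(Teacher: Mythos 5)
Your proposal is essentially the same route the paper takes: specialize Theorem~\ref{thm:padicGZoverKIntro} to $k=2$, invoke the factorization of $L_{p,\beta}^{\rm Kob}(f_{/K},s)$ into $L_{p,\beta}(f_{/\QQ},s)\cdot L_{p,\beta}(f^K_{/\QQ},s)$ established in Section~\ref{sec:padicLfunctions} (and its complex analogue), evaluate the twisted factor at $s=1$ by interpolation, feed in the classical Gross--Zagier formula to produce $c(f)$, and descend the heights from $K$ to $\QQ$. The one cosmetic difference is how the $K$-to-$\QQ$ descent is handled: you track the degree factor $[K:\QQ]=2$ explicitly via base-change compatibility of Nekov\'a\v{r}'s height, while the paper avoids this by working with the scale- and field-invariant ratio $h^{\rm Nek}_{\beta,K}(P_f,P_f)/\langle P_f,P_f\rangle_{\infty,K}$, so that the $[K:\QQ]$ and normalization factors cancel automatically; both are correct. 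Your stated period $\Omega_f^{-}$ for the twisted value should be $\Omega_{f^K}^{+}$ (since $j=1$ and $\eta=\mathds{1}$ give the sign $(-1)^{j-1}\eta(-1)=+1$, and the period is that of the twisted form $f^K$, not of $f$), and the scalar in the $p$-adic factorization also involves $\langle f,f\rangle_N$ and a sign, not just $|D_K|$ and periods — but you flag these bookkeeping issues yourself, and they do not affect the structure of the argument.
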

Corollary~\ref{cor:padicGZoverQIntro} is proved in \S\ref{subsec_proof_of_Cor_cor_padicGZoverQIntro}.

\begin{remark}
\label{rem:pordversionsIntro}
The version of Theorem~\ref{thm:padicGZoverKIntro} above for the $p$-ordinary stabilization $f^\alpha$ is due to Perrin-Riou (when $k=2$) and Nekov\'a\v{r} (when $k$ is general). The version of Corollary~\ref{cor:padicGZoverQIntro} concerning the $p$-adic $L$-function $L_{p,\alpha}(f_{/\QQ},s)$ follows from Perrin-Riou's $p$-adic Gross--Zagier theorem.
\end{remark}

\subsubsection{Elliptic curves} In this subsection, we will specialize to the case when $K_f=\QQ$, so that $A=A_f$ is an elliptic curve defined over $\QQ$ of conductor $N$ and analytic rank one, with good ordinary reduction at $p$ and without CM. We note that it follows from \cite[Theorem 1.3]{EmertonTheta} that $f^\beta$ is not $\theta$-critical.

We still  assume that $\ord_{s=1}L(f_{/\QQ},1)=1$ and we choose $K$ as in \S\ref{subsubsec_abvarGL2type}. We assume that the mod $p$ representation 
$$\overline{\rho}_A: G_{\QQ}\lra {\rm Aut}_{\FF_p}(A[p])\stackrel{\sim}{\lra}\GL_2(\FF_p)$$
is absolutely irreducible. We fix a Weierstrass minimal model $\mathcal{A}_{/\ZZ}$ of $A$ and let $\omega_{\mathcal{A}}$ denote the N\'eron differential normalized so that that its associated real period $\Omega_A^+:=\int_{E(\mathbb R)}\omega_{\mathcal A}$ is positive.  Set $V=T_p(A)\otimes\QQ_p$ and we let $\omega_{\textup{cris}}\in \DD_{\textup{cris}}(V)$ denote the element that corresponds to $\omega_\mathcal{A}$ under the comparison isomorphism. We denote by $D_\alpha, D_\beta\subset \DD_{\rm cris}(V)$ the corresponding eigenspaces as before. Set $\omega_{\textup{cris}}=\omega_\alpha+\omega_\beta$ with $\omega_\alpha \in D_\alpha$ and $\omega_\beta\in D_\beta$. We let 
$$[-,-]: \DD_{\textup{cris}}(V)\times \DD_{\textup{cris}}(V)\lra E$$ 
denote  the canonical pairing (induced from the Weil pairing) and we set $\delta_A:=[\omega_\beta,\omega_\alpha]/c(f)$. We let $\omega_{\mathcal{A}}^*\in \DD_{\rm cris}(V)/{\rm Fil^0}\DD_{\rm cris}(V)$ denote the unique element such that $[\omega_{\mathcal{A}},\omega_{\mathcal{A}}^*]=1$. We remark that $\DD_{\rm cris}(V)/{\rm Fil^0}\DD_{\rm cris}(V)$ may be identified with the tangent space of $A(\QQ_p)$ and the Bloch-Kato exponential map 
$$\exp_V:\,\DD_{\rm cris}(V)/{\rm Fil^0}\DD_{\rm cris}(V)\lra H^1_{\rm f}(\QQ_p,V)=A(\QQ_p)\otimes\QQ_p$$ 
with the exponential map for the $p$-adic Lie group $A(\QQ_p)$. 
\begin{theorem}
\label{thm:PRRubinFormula}
Suppose $A=A_f$ is in the previous paragraph $($so $k=2$, $K_f=\QQ$ and $\overline{\rho}_f$ is absolutely irreducible$)$. In addition to all the hypotheses of Theorem~\ref{thm:padicGZoverKIntro}, assume that $\ord_{s=1}L(A_{/\QQ},1)=1$. Then
$$\exp_{V}\left(\omega^*_{\mathcal{A}}\cdot\sqrt{\delta_A\left((1-1/\alpha)^{-2}\cdot L_{p,\alpha}^\prime(f_{/\QQ},1)-(1-1/\beta)^{-2}\cdot L_{p,\beta}^\prime(f_{/\QQ},1)\right)}\right)$$
is a $\QQ$-rational point on the elliptic curve $A$ of infinite order.
\end{theorem}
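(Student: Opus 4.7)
The plan is to combine Corollary~\ref{cor:padicGZoverQIntro} at both slopes $\lambda=\alpha,\beta$ with an explicit comparison between the two Nekov\'a\v{r} height pairings $h^{\textup{Nek}}_{\lambda,\QQ}$. Applying Corollary~\ref{cor:padicGZoverQIntro} together with its $\alpha$-counterpart (due to Perrin-Riou, cf.\ Remark~\ref{rem:pordversionsIntro}) and subtracting the two instances yields
$$
(1-1/\alpha)^{-2}L_{p,\alpha}^\prime(f_{/\QQ},1)-(1-1/\beta)^{-2}L_{p,\beta}^\prime(f_{/\QQ},1)=c(f)\left(h^{\textup{Nek}}_{\alpha,\QQ}(P_f,P_f)-h^{\textup{Nek}}_{\beta,\QQ}(P_f,P_f)\right).
$$
Multiplying both sides by $\delta_A=[\omega_\beta,\omega_\alpha]/c(f)$ then reduces the theorem to proving the identity
$$
[\omega_\beta,\omega_\alpha]\cdot\left(h^{\textup{Nek}}_{\alpha,\QQ}(P_f,P_f)-h^{\textup{Nek}}_{\beta,\QQ}(P_f,P_f)\right)=\log_A(P_f)^2,
$$
where $\log_A$ denotes the $p$-adic formal logarithm on $A$ normalized by the N\'eron differential $\omega_\mathcal{A}$.

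To establish this identity, I would exploit the fact that $h^{\textup{Nek}}_{\alpha,\QQ}$ and $h^{\textup{Nek}}_{\beta,\QQ}$ are built from the same global Selmer class of $P_f$ and differ only through the choice of splitting $\Dcris(V)=D_\lambda\oplus{\rm Fil}^0\Dcris(V)$ of the Hodge filtration. Consequently, the local decomposition of Nekov\'a\v{r}'s height pairing forces $h^{\textup{Nek}}_{\alpha,\QQ}-h^{\textup{Nek}}_{\beta,\QQ}$ to vanish at all places $v\ne p$, and at $p$ to be given by a symmetric bilinear form on $H^1_{\rm f}(\QQ_p,V)\cong A(\QQ_p)\otimes\QQ_p$. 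Since the target $\Dcris(V)/{\rm Fil}^0\Dcris(V)$ of the Bloch-Kato logarithm is one-dimensional with basis $\omega_\mathcal{A}^*$, and since $\log_V(Q)=\log_A(Q)\,\omega_\mathcal{A}^*$ for every $Q\in A(\QQ_p)$ (as $\omega_\mathcal{A}^*$ is dual to $\omega_\mathcal{A}$ under $[\,\cdot\,,\,\cdot\,]$), this bilinear form is determined by its value on $\omega_\mathcal{A}^*\otimes\omega_\mathcal{A}^*$. A direct unwinding of Nekov\'a\v{r}'s construction in \cite{Ne92}, using the decomposition $\omega_\mathcal{A}=\omega_\alpha+\omega_\beta$ and the isotropy of ${\rm Fil}^0$ for the canonical pairing, identifies this value with $[\omega_\beta,\omega_\alpha]^{-1}$; the displayed identity follows.

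Combining the two steps gives
$$\delta_A\left((1-1/\alpha)^{-2}L_{p,\alpha}^\prime(f_{/\QQ},1)-(1-1/\beta)^{-2}L_{p,\beta}^\prime(f_{/\QQ},1)\right)=\log_A(P_f)^2.$$
Extracting a square root yields $\pm\log_A(P_f)\in\QQ_p$, whence $\exp_V\!\left(\pm\log_A(P_f)\cdot\omega_\mathcal{A}^*\right)=\pm P_f$ in $A(\QQ_p)\otimes\QQ_p$; the resulting element lies in the image of $A(\QQ)$ because $P_f\in A(\QQ)$ by construction. Finally, the assumption $\ord_{s=1}L(A_{/\QQ},1)=1$ combined with the complex Gross--Zagier formula and Kolyvagin's Euler system argument shows that $P_f$ has infinite order, and hence $\log_A(P_f)\ne 0$ because the kernel of $\log_A$ on $A(\QQ_p)$ is the torsion subgroup, so the constructed point is nonzero and of infinite order. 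The main obstacle is the local height computation in the second paragraph: pinning down the precise coefficient $[\omega_\beta,\omega_\alpha]^{-1}$ from Nekov\'a\v{r}'s abstract definition requires delicate bookkeeping with the two complementary subspaces $D_\alpha,D_\beta$ and the canonical pairing on $\Dcris(V)$, even though the overall shape of the formula is forced by scaling invariance.
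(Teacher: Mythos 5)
Your overall strategy is genuinely different from the paper's, and it is worth comparing the two. The paper does \emph{not} prove Theorem~\ref{thm:PRRubinFormula} directly: it proves Theorem~\ref{thm:PRconjectureIntro} (an identity relating $\log_A(\res_p({\rm BK}_1))$ to $\log_A(\res_p(P))^2$) by evaluating $L_{p,\lambda}'(f,1)$ for a \emph{single} $\lambda$ in two ways --- once via $p$-adic Gross--Zagier and once via Kato's theorem (Theorem~\ref{thm:katohansen}), Perrin-Riou's explicit reciprocity law, and the Rubin-style formula of Benois--B\"uy\"ukboduk --- and then deduces Theorem~\ref{thm:PRRubinFormula} from Theorem~\ref{thm:PRconjectureIntro} via the factorization $\mathfrak{L}_{\rm PR}=\mathscr{D}_{f^\alpha}\omega_\alpha+\mathscr{D}_{f^\beta}\omega_\beta$ and Perrin-Riou's big logarithm formalism (the authors point to \cite[\S 2.3]{kbbIwasawa2017} for this step). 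Your proposal bypasses the Beilinson--Kato element and the explicit reciprocity law entirely: you apply the $p$-adic Gross--Zagier formula at \emph{both} stabilizations and reduce the whole theorem to the purely local identity
\[
[\omega_\beta,\omega_\alpha]\cdot\bigl(h^{\textup{Nek}}_{\alpha,\QQ}(P_f,P_f)-h^{\textup{Nek}}_{\beta,\QQ}(P_f,P_f)\bigr)=\log_A(\res_p(P_f))^2.
\]
That reduction is clean and correct; the constant $[\omega_\beta,\omega_\alpha]^{-1}$ is also the one forced by consistency with the statement, and your structural remarks (that the difference of the two height pairings is supported only at $p$, and that a symmetric form on a one-dimensional space is a scalar) are correct.

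The gap is precisely where you flag it, and it is more serious than you suggest. Notice that, given $p$-adic Gross--Zagier at both $\alpha$ and $\beta$, the displayed local identity is not a lemma \emph{en route} to Theorem~\ref{thm:PRRubinFormula}; it is \emph{equivalent} to Theorem~\ref{thm:PRRubinFormula}. You have thus reformulated the theorem as a statement about the two Nekov\'a\v{r} pairings, not proved it. Calling the verification of this identity ``a direct unwinding of Nekov\'a\v{r}'s construction'' conceals all the difficulty: the two height pairings are built from Bockstein morphisms on two different Selmer complexes whose local conditions at $p$ are the $(\varphi,\Gamma)$-submodules $\mathbb D_\alpha$ and $\mathbb D_\beta$, and extracting the explicit scalar $[\omega_\beta,\omega_\alpha]^{-1}$ from this requires a genuine computation in the style of Benois' Rubin-type formulae (\cite[Theorem 4.13]{BenoisBuyukbodukExceptionalPR}) or Nekov\'a\v{r}'s comparison results in \cite[\S 11.4]{Ne06}, not a formal manipulation with $\omega_\mathcal{A}=\omega_\alpha+\omega_\beta$. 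If you can supply that local computation --- or locate a reference where the comparison of $p$-adic heights for two different Hodge-splittings of $\Dcris(V)$ is established with this explicit constant --- your argument would indeed give a shorter route to Theorem~\ref{thm:PRRubinFormula} that sidesteps Kato's theorem and the Beilinson--Kato element; as written, it assumes the key point.
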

The theorem above asserts the validity of a conjecture of Perrin-Riou. We also note that this theorem allows the explicit computation of rational points on elliptic curves.  Indeed one can compute the expression appearing in Theorem \ref{thm:PRRubinFormula} to very high $p$-adic accuracy by using the methods of \cite{PS} where algorithms are given to compute the derivatives of both  ordinary and critical slope $p$-adic $L$-functions.  Such computations should be compared to the analogous computations in \cite{KP} in the non-ordinary case.

Theorem \ref{thm:PRRubinFormula} may be deduced from the next result we present (in a manner identical to the argument in \cite[\S2.3]{kbbIwasawa2017}), which compares  the Bloch-Kato logarithms of two distinguished elements of the Bloch-Kato Selmer group $H^1_{\rm}(\QQ,V)$\,: the Beilinson-Kato element ${\rm BK}_1$ and the Heegner point $P_f$ given as above, for an appropriate choice of the imaginary quadratic field $K$. Notice that under our running hypotheses,
$$H^1_{\rm f}(\QQ,V)=A(\QQ)\otimes\QQ_p$$ 
and it is a one-dimensional $\QQ_p$-vector space. Note that a suitable integer multiple of $P_f\in A(\QQ)\otimes\QQ$ is a rational point on $A$ and as such, it is a genuinely algebraic object, whereas ${\rm BK}_1 \in A(\QQ)\otimes\QQ_p$ is a transcendental object that relates to both $p$-adic $L$-functions. The proof of Theorem~\ref{thm:PRRubinFormula} boils down to setting up an explicit comparison between ${\rm BK}_1$  and $P_f$. This is precisely the content of Theorem~\ref{thm:PRconjectureIntro}. It was conjectured by Perrin-Riou and was proved independently by Bertolini--Darmon--Venerucci in their preprint~\cite{BDV} (their approach is different from ours).

\begin{theorem}
\label{thm:PRconjectureIntro}
Suppose $A_{/\QQ}$ is an elliptic curve as in Theorem~\ref{thm:PRRubinFormula} and let $P\in A(\QQ)$ be any lift of a generator of $A(\QQ)/A(\QQ)_{\rm tor}$. We have
$$\log_A\left(\textup{res}_p({\rm BK}_1)\right)=-(1-1/\alpha)(1-1/\beta)\cdot d(f)\cdot\log_A\left(\textup{res}_p(P)\right)^2,$$
where $\log_A$ stands for the coordinate of the Bloch-Kato logarithm associated to $A$ with respect to the basis (of the tangent space) dual to that given by the N\'eron differential $\omega_{\mathcal{A}}$, and $d(f)=d^2c(f)$ with $d\in \QQ^\times$ the unique rational number with $P_f=P\otimes d$ in $A(\QQ)\otimes \QQ$.

\end{theorem}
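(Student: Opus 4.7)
Under the running hypotheses, the theorems of Gross--Zagier and Kolyvagin ensure that $A(\QQ)$ has rank one, so $H^1_{\rm f}(\QQ,V)\cong A(\QQ)\otimes\QQ_p$ is one-dimensional and generated by (the Kummer image of) $P$. Writing $\mathrm{BK}_1=\kappa\cdot P$ for a unique $\kappa\in\QQ_p$, the theorem amounts to identifying $\kappa = -(1-1/\alpha)(1-1/\beta)\,c(f)\,\log_A(\res_p P)$. The plan is to compare two independent computations of the auxiliary quantity
$$\Delta:=\frac{L_{p,\alpha}^\prime(f_{/\QQ},1)}{(1-1/\alpha)^{2}} - \frac{L_{p,\beta}^\prime(f_{/\QQ},1)}{(1-1/\beta)^{2}}.$$

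First I would apply Corollary~\ref{cor:padicGZoverQIntro} at both $\lambda=\alpha$ and $\lambda=\beta$, which gives $\Delta = c(f)\bigl(h^{\textup{Nek}}_{\alpha,\QQ}(P,P)-h^{\textup{Nek}}_{\beta,\QQ}(P,P)\bigr)$. The two Nekov\'a\v{r} height pairings are built from identical data away from $p$; at $p$ they differ only in which splitting of the Hodge filtration on $\Dcris(V)$ is used to define the local height. A direct local comparison of these two splittings through the $\varphi$-eigendecomposition $\Dcris(V)=D_\alpha\oplus D_\beta$ yields
$$h^{\textup{Nek}}_{\alpha,\QQ}(P,P)-h^{\textup{Nek}}_{\beta,\QQ}(P,P) = K\cdot \log_A(\res_p P)^{2}$$
for an explicit constant $K$ depending only on $\alpha,\beta,p$.

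Next I would invoke Kato's explicit reciprocity law. Both $L_{p,\alpha}(f,s)$ and $L_{p,\beta}(f,s)$ arise from the same Iwasawa-theoretic Beilinson--Kato class via Perrin-Riou's big logarithm paired against $\omega_\alpha$ and $\omega_\beta$, respectively. Since $L(f,1)=0$ both $p$-adic $L$-functions vanish at $s=1$, and because $\mathrm{BK}_1\in H^1_{\rm f}(\QQ,V)$, Perrin-Riou's formula at the trivial character expresses each $L^\prime_{p,\lambda}(f,1)$ as an explicit scalar multiple of $\log_A(\res_p \mathrm{BK}_1)$. Using $\alpha\beta=p$ to simplify the operator factors $(1-\varphi)$ and $(1-p^{-1}\varphi)$ that govern Perrin-Riou's formula, the particular combination $\Delta$ rearranges into $K^\prime\cdot\log_A(\res_p \mathrm{BK}_1)$ for an explicit constant $K^\prime$. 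Equating the two expressions for $\Delta$ gives
$$\log_A(\res_p \mathrm{BK}_1) = \frac{K}{K^\prime}\,c(f)\,\log_A(\res_p P)^{2}.$$

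The main obstacle is the final matching $K/K^\prime = -(1-1/\alpha)(1-1/\beta)$. The Euler factor $(1-1/\alpha)(1-1/\beta)$ will emerge from the interaction of the $\varphi$-eigenvalues $\alpha,\beta$ with the relation $\alpha\beta=p$ in the operators appearing in Perrin-Riou's formula, while the sign will be controlled by the antisymmetry of the Weil pairing on $\Dcris(V)$ together with the orientation conventions for $\omega_{\mathcal{A}}$ and $\omega_{\mathcal{A}}^*$. This careful accounting of normalizations across the two computations, rather than any single deep input, is the technical heart of the argument.
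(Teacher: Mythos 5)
Your step (2) is sound in outline---computing $\Delta$ via the two $p$-adic Gross--Zagier formulae and identifying the difference of Nekov\'a\v{r} heights (for the two Hodge splittings) with a multiple of $\log_A(\res_p P)^2$ is a well-known mechanism, and indeed this is part of why Theorem~\ref{thm:PRRubinFormula} follows from Theorem~\ref{thm:PRconjectureIntro}. But your step (3) contains a genuine gap that the paper's proof is specifically designed to circumvent.

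You claim that because $\mathrm{BK}_1 \in H^1_{\rm f}(\QQ,V)$ is crystalline, ``Perrin-Riou's formula at the trivial character expresses each $L'_{p,\lambda}(f,1)$ as an explicit scalar multiple of $\log_A(\res_p\mathrm{BK}_1)$.'' This is false. Perrin-Riou's leading-term formula for $\mathds{1}(\frak{L}'_{\rm PR})$ at a crystalline class pins down this vector in $\DD_{\rm cris}(V)$ only \emph{modulo} ${\rm Fil}^0\DD_{\rm cris}(V)$---one linear constraint on two unknowns $L'_{p,\alpha}(f,1)$ and $L'_{p,\beta}(f,1)$. The residual one-dimensional piece is controlled by a dual exponential applied to a \emph{derived} Beilinson--Kato class, not by $\log_V$. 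In fact the opposite of your claim is classical in the ordinary case: Perrin-Riou's own $p$-adic Gross--Zagier theorem identifies $L'_{p,\alpha}(f,1)$ with a $p$-adic height, not a logarithm, so an individual derivative cannot be a multiple of $\log_A(\res_p\mathrm{BK}_1)$. The specific combination $\Delta$ (with the asymmetric factors $(1-1/\alpha)^{-2}$ and $(1-1/\beta)^{-2}$) mixes the ${\rm Fil}^0$ and ${\rm Fil}^0$-transverse components, so it is likewise not pinned down by the reciprocity law alone; making $\Delta$ a multiple of $\log_A(\res_p\mathrm{BK}_1)$ is essentially what Perrin-Riou conjectured, not something the explicit reciprocity law yields for free.

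The paper's argument resolves exactly this difficulty. It fixes one $\lambda$ for which $h^{\rm Nek}_{\lambda,\QQ}$ is non-trivial (Theorem~\ref{thm:padicheightnontrivialIntro}), then runs a chain of equalities: Gross--Zagier for $L'_{p,\lambda}(f,1)$, followed by the Kato/Hansen reciprocity law, then the introduction of the derived Kato class $\partial_\lambda\mathbb{BK}_1$ together with the Perrin-Riou--Colmez derived reciprocity law, and---crucially---a Rubin-style formula (\cite[Theorem 4.13]{BenoisBuyukbodukExceptionalPR}) that converts $[\exp^*(\partial_\lambda\mathrm{BK}_1),\log_V(\res_p\mathrm{BK}_1)]$ into $-h^{\rm Nek}_{\lambda,\QQ}(\mathrm{BK}_1,\mathrm{BK}_1)$. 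It is this last ingredient that bridges the ${\rm Fil}^0$-ambiguity and lets the logarithm of $\mathrm{BK}_1$ appear; the result then drops out by cancelling the (non-zero) height $h^{\rm Nek}_{\lambda,\QQ}(P,P)$. Your proposal is missing the Rubin-style formula entirely, and without it the second computation of $\Delta$ does not produce a multiple of $\log_A(\res_p\mathrm{BK}_1)$.
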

The proof of Theorem~\ref{thm:PRconjectureIntro} is presented in \S\ref{sec:PRsconj}. One key result that we rely on establishing Theorem~\ref{thm:PRconjectureIntro} is the following consequence of our $p$-adic Gross--Zagier formula. We record it in \S\ref{subsubsec_padicheightsandBSDabvar} as we believe that it is of independent interest; while we re-iterate that a proof of Theorem~\ref{thm:PRRubinFormula} is not written down explicitly in this article as it follows verbatim as in~\cite{kbbIwasawa2017}.

\subsubsection{$p$-adic heights on elliptic curves and the conjecture of Birch and Swinnerton-Dyer} \label{subsubsec_padicheightsandBSDabvar}
Throughout this section, we still assume that $f\in S_2(\Gamma_0(N))$ and that $K_f=\QQ$. Let $A=A_f$ denote the rational elliptic curve associated to $f$. 
\begin{theorem}
\label{thm:padicheightnontrivialIntro}
Suppose $f=\sum a_nq^n\in S_2(\Gamma_0(N))$ is a normalized newform with $N\geq 3$, $K_f=\QQ$ and such that 
\begin{itemize}
\item $p\nmid a_p$, 
\item neither of the $p$-stabilizations of $f$ is $\theta$-critical (equivalently, $A$ is non-CM), 
\item the residual representation $\overline{\rho}_f$ is absolutely irreducible,
\item $\ord_{s=1}L(f_{/\QQ},1)=1$\,.
\end{itemize}
Then either $h^{\textup{Nek}}_{\alpha,\QQ}$ or $ h^{\textup{Nek}}_{\beta,\QQ}$ is non-degenerate.
\end{theorem}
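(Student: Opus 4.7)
The plan is to reduce the statement to a non-vanishing assertion for a single scalar, then combine the two $p$-adic Gross--Zagier formulas with a Nekov\'a\v{r}-style comparison of the two height pairings.

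Under the running hypotheses, the paragraph preceding the statement recalls that $A_f(\QQ)\otimes\QQ$ is a one-dimensional $K_f$-vector space spanned by the trace of a Heegner point, and hence $A_f(\QQ)\otimes_{\cO_f}E$ is a one-dimensional $E$-space spanned by $P_f$; non-degeneracy of $h^{\textup{Nek}}_{\lambda,\QQ}$ is therefore equivalent to $h^{\textup{Nek}}_{\lambda,\QQ}(P_f,P_f)\ne 0$. Combining Corollary~\ref{cor:padicGZoverQIntro} (our critical-slope formula, $\lambda=\beta$) with Perrin-Riou's classical ordinary counterpart (cf.\ Remark~\ref{rem:pordversionsIntro}, $\lambda=\alpha$) gives
\begin{equation*}
L'_{p,\lambda}(f_{/\QQ},1)\;=\;(1-1/\lambda)^2\,c(f)\,h^{\textup{Nek}}_{\lambda,\QQ}(P_f,P_f),\qquad \lambda\in\{\alpha,\beta\},
\end{equation*}
so the theorem is equivalent to the assertion that at least one of $L'_{p,\alpha}(f_{/\QQ},1)$ and $L'_{p,\beta}(f_{/\QQ},1)$ is non-zero.

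The crux is a comparison formula for the difference of the two height pairings, which should follow from Nekov\'a\v{r}'s general theory since $h^{\textup{Nek}}_{\alpha,\QQ}$ and $h^{\textup{Nek}}_{\beta,\QQ}$ are built from the same Poitou--Tate construction and differ only in the choice of Frobenius-stable splitting $\Dcris(V_f)=D_\lambda\oplus\Fil^0\Dcris(V_f)$ of the Hodge filtration. This is a rank-one perturbation of the Lagrangian, and tracking it through the construction should yield an identity of the form
\begin{equation*}
\big(h^{\textup{Nek}}_{\alpha,\QQ}-h^{\textup{Nek}}_{\beta,\QQ}\big)(x,y)\;=\;\kappa\cdot\big(\ell_\alpha(x)\,\ell_\beta(y)+\ell_\alpha(y)\,\ell_\beta(x)\big),
\end{equation*}
where $\ell_\lambda$ denotes an $E$-linear functional on $H^1_{\textup{f}}(\QQ_p,V_f)$ canonically attached to $D_\lambda$ (concretely, the Bloch--Kato logarithm followed by a pairing against a basis of $D_\lambda$ via the self-duality of $\Dcris(V_f)$) and $\kappa\in E^\times$ is an explicit constant determined by the pairing of $\omega_\alpha$ and $\omega_\beta$.

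Granting this identity, the conclusion is swift: since $P_f$ has infinite order, the inclusion $A_f(\QQ)\hookrightarrow A_f(\QQ_p)$ forces $\mathrm{loc}_p(P_f)\in H^1_{\textup{f}}(\QQ_p,V_f)$ to be non-zero, and the non-$\theta$-critical hypothesis on $f^\beta$ (together with its automatic analogue for $f^\alpha$) ensures that $D_\alpha$ and $D_\beta$ are both transverse to $\Fil^0\Dcris(V_f)$, so that both $\ell_\alpha$ and $\ell_\beta$ are non-trivial on the one-dimensional space $H^1_{\textup{f}}(\QQ_p,V_f)$. Hence $\ell_\alpha(P_f)\ell_\beta(P_f)\ne 0$, which forces $h^{\textup{Nek}}_{\alpha,\QQ}(P_f,P_f)\ne h^{\textup{Nek}}_{\beta,\QQ}(P_f,P_f)$, so at least one of the two scalars must be non-zero. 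The main obstacle is the derivation of the displayed comparison formula and the explicit identification of $\kappa$; I expect both to fall out of a direct computation of the difference of the two Lagrangian splittings in Nekov\'a\v{r}'s construction, combined with careful bookkeeping of the normalisations fixed by $c(f)$ and the canonical polarisation on $A_f$.
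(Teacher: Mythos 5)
Your reduction of the statement to the non-vanishing of at least one of $L'_{p,\alpha}(f_{/\QQ},1)$, $L'_{p,\beta}(f_{/\QQ},1)$---via Corollary~\ref{cor:padicGZoverQIntro} and Perrin-Riou's ordinary Gross--Zagier formula---is correct and coincides with the opening move of the paper. From there, however, the two arguments part ways completely. You propose to establish a direct comparison identity $h^{\textup{Nek}}_{\alpha,\QQ}-h^{\textup{Nek}}_{\beta,\QQ}=\kappa\,(\ell_\alpha\otimes\ell_\beta+\ell_\beta\otimes\ell_\alpha)$ intrinsic to Nekov\'a\v{r}'s height theory, then conclude by noting the right side is nonzero on $(P_f,P_f)$. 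The paper never compares the two heights against each other. Instead it argues by contradiction: if both derivatives vanished then, via Theorem~\ref{thm:katohansen} (which identifies the components of Perrin-Riou's vector-valued $p$-adic $L$-function $\frak{L}_{\rm PR}=\frak{Log}_V(\res_p\mathbb{BK}_1)$ with $\mathscr{D}_{f^\alpha}$ and $\mathscr{D}_{f^\beta}$ up to units), the full derivative $\mathds{1}(\frak{L}_{\rm PR}')$ would vanish; Perrin-Riou's explicit formula then forces $\log_V(\res_p{\rm BK}_1)=0$, hence $\res_p({\rm BK}_1)=0$; and the Kolyvagin--Logachev injectivity of $\res_p$ on $H^1_{\rm f}(\QQ,V)$ would make the Beilinson--Kato class torsion, contradicting the known non-vanishing of ${\rm BK}_1$ in analytic rank one. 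The paper thus outsources the non-vanishing to the Beilinson--Kato Euler system rather than proving anything internal about the pair of heights.

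The genuine gap in your proposal is the comparison formula itself, which you acknowledge as ``the main obstacle'' and for which you give neither a derivation nor a reference. The shape of the formula is plausible---the difference of two Nekov\'a\v{r} heights built from distinct Lagrangian complements of $\Fil^0\Dcris(V_f)$ is a purely local quantity at $p$, and in the one-dimensional situation at hand it should be proportional to $\log_A(\res_p\cdot)^2$ with a constant governed by $[\omega_\alpha,\omega_\beta]\ne 0$. But pinning down this identity rigorously, with the correct sign and normalisation and the verification that no contributions from universal norms or exceptional-zero phenomena interfere, is genuinely delicate: the ``Rubin-style'' height identity the paper invokes in the proof of Theorem~\ref{thm:PRconjectureIntro} is of exactly this flavor and required a separate paper to establish. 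Were your identity available, your argument would in fact be cleaner than the paper's---it would bypass Kato's theorem, Hansen's announced result, and the non-vanishing of ${\rm BK}_1$ entirely, and would prove the stronger conclusion $h^{\textup{Nek}}_{\alpha,\QQ}(P_f,P_f)\neq h^{\textup{Nek}}_{\beta,\QQ}(P_f,P_f)$. As written, though, the central step is missing, and the proposal does not constitute a proof.
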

Theorem~\ref{thm:padicheightnontrivialIntro} is proved in \S\ref{subsubsec_proof_of_thm_padicheightnontrivialIntro}.
\begin{remark}
Suppose $p$ is a prime of good supersingular reduction for the elliptic curve $A=A_f$, a stronger form of Theorem~\ref{thm:padicheightnontrivialIntro} was proved by Kobayashi in \cite{KobayashiGZ}. Fortunately, this weaker version is good enough for applications towards the Birch and Swinnerton-Dyer conjecture we discuss below.
\end{remark} 

The final result we shall record in this introduction (Theorem~\ref{thm:BSDformulainrankoneIntro} below) is a consequence of Theorem~\ref{thm:padicGZoverKIntro} and Theorem~\ref{thm:padicheightnontrivialIntro} towards the Birch and Swinnerton-Dyer conjecture for an elliptic curve $A/\QQ$. Under the additional hypothesis that $A$ be semistable, this has been proved in \cite{BertiBertoliniVenerucci, JetchevSkinnerWan, ZhangBSD}\footnote{Besides the assumption that $A$ be semistable, \cite[Theorem 7.3]{ZhangBSD} has additional assumption that $p$ is coprime to Tamagawa factors and \cite[Theorem A]{BertiBertoliniVenerucci} requires $p$ be non-anomalous for $A$. In \S5.6 of \cite{BertiBertoliniVenerucci}, the authors explain a strategy to weaken the semistability hypothesis.} using different techniques. Our results here allow us to adapt the proof of \cite[Cor. 1.3]{KobayashiGZ} to the current setting to obtain a much simpler proof of (the $p$-part of) the Birch and Swinnerton-Dyer formula and eliminate the semistability hypothesis in \cite{JetchevSkinnerWan}. The proof of Theorem~\ref{thm:BSDformulainrankoneIntro} is presented in \S\ref{sec:BSD}. 

We note that our argument in \S\ref{sec:BSD} can be extended (with appropriate adjustments) to treat the case of abelian varieties of ${\rm GL_2}$-type, once the easy generalizations of \cite[\S3]{PerrinRiou93Rubinsformula} and \cite[\S2]{kbbIwasawa2017} to cover the case when $K_f\neq \QQ$ become available.

 Let $f\in S_2(\Gamma_0(N))$ be the normalized newform associated to $A$ as above. We retain our hypothesis that $\ord_{s=1}L(f_{/\QQ},1)=1$. In this situation, it follows from the work of Gross--Zagier and Kolyvagin (on choosing the auxiliary imaginary quadratic field $K$ as in \S\ref{subsubsec_abvarGL2type}) that the Tate-Shafarevich group ${\rm III}(A/\QQ)$ is finite and the Heegner point $P_f$ generates the $\QQ$-vector space $A(\QQ)\otimes \QQ$.

\begin{theorem}
\label{thm:BSDformulainrankoneIntro} Suppose $A_{/\QQ}$ is a {non-CM} elliptic curve with analytic rank one and that 
\begin{itemize}
\item[\mylabel{item_MC1}{{\bf (MC1)}}] $A$ has good ordinary reduction at $p$,
\item[\mylabel{item_MC2}{{\bf (MC2)}}] $\overline{\rho}_A$ is absolutely irreducible, 
\item[\mylabel{item_MC3}{{\bf (MC3)}}] one of the following two conditions hold:
\begin{itemize}
\item[\mylabel{item_MC31}{{\bf (MC3.1)}}]  There exists a prime $q||N$ such that $p\nmid {\rm ord}_q(\Delta_q)$ for a minimal discriminant $\Delta_q$ of $A$ at $q$.
\item[\mylabel{item_MC32}{{\bf (MC3.2)}}] We have ${\rho}_A(G_\QQ)\supset {\rm SL}_2(\ZZ_p)$ and there exists a real quadratic field $F$ verifying the conditions of \cite[Theorem 4]{Wan2015ForumSigma}.
\end{itemize}
\end{itemize}
Then the $p$-part of the Birch and Swinnerton-Dyer formula for $A$ holds true. 
\end{theorem}

\begin{remark}
\label{intro_remark_IMC}
 The hypotheses \ref{item_MC1}--\ref{item_MC3} in Theorem~\ref{thm:BSDformulainrankoneIntro} are to insure that the Iwasawa main conjecture for the elliptic curve $A/\QQ$ at the good ordinary prime $p$ holds true; c.f. \cite{skinnerurbanmainconj}, \cite[Theorem 2.5.2]{skinnerPasificJournal2016} and \cite[Theorem 4]{Wan2015ForumSigma}.
\end{remark}

We close this introduction with a brief overview of our strategy to prove Theorem~\ref{thm:padicGZoverKIntro}. We remark that the original approach of Perrin-Riou and Kobayashi (which is an adaptation of the original argument of Gross and Zagier) cannot be applied in our case of interest as there is no Rankin-Selberg construction of the critical-slope $p$-adic $L$-functions $L_{p}(f^\beta_{/\QQ},s)$ and $L_{p}(f^\beta_{/\QQ}\otimes\epsilon_K,s)$. The main idea is to prove a version of the asserted identity in $p$-adic families. That is to say, we shall choose a Coleman family $\f$ through the $p$-stabilized eigenform $f^\beta$ (over an affinoid domain $\mathscr{A}$) and we shall consider the following objects that come associated to $\f$:
\begin{itemize}
\item A two-variable $p$-adic $L$-function $L_p(\f_{/K},s)$. The construction is essentially due to Loeffler (and it compares to that due to Bella\"iche); we recall its defining properties in \S\ref{sec:padicLfunctions} below. One subtle point is that this $p$-adic $L$-function does not\footnote{In fact, it could not: See {Remark~\ref{REM_KobayashiNekDoesNotInterpolate}} below where we explain that $L_{p,\beta}^{\rm Kob}(f_{/K},s)$ does not vary continuously as $f^\beta$ varies in families.} interpolate $L_{p,\beta}^{\rm Kob}(f_{/K},s)$, but rather an explicit multiple of it. This extra (non-interpolatable) $p$-adic multiplier is essentially the $p$-adic interpolation factor for the adjoint $p$-adic $L$-function attached to $f^\beta$. Crucially, the same factor also appears in the height side. 
\item An $\mathscr{A}$-adic height pairing $\mathfrak{h}_{\f,K}$ that interpolates Nekov\'a\v{r}'s $p$-adic height pairings for the members of the Coleman family, in the sense that the diagram \eqref{eqn:comaprisonofheights} below (located just before the start of \S\ref{subsec:BigHeegstatementsconsequences}) commutes. It is important to compare the ``correction factor''\footnote{This factor appears as the ratio of the two Poincar\'e duality pairings on the the $f$-direct summand summands of two modular curves of respective levels $\Gamma_0(N)$ and $\Gamma_0(N)\cap \Gamma_1(p)$. See Proposition~\ref{prop:comparisionoftwopoincarepairings} where we make this discussion precise. We are grateful to D. Loeffler for explaining this to us.} that appears on the right most vertical arrow in the lower right square to the non-interpolatable $p$-adic multiplier mentioned in the previous paragraph. The construction of the $\mathscr{A}$-adic height pairing is due to Benois and it is recalled in \S\ref{subsec:Aadicheights} below. 
\item A ``universal'' Heegner point $\mathscr{Z}_{\f}$ that interpolates the Heegner cycles associated to the central critical twists of the members of the family $\f$. The construction of this class is one of the main ingredients here and it is carried out in \cite{JLZHC, BLGHCpatching}.
\end{itemize}
Relying on the density of non-critical-slope crystalline points in the family $\f$ and a $p$-adic Gross--Zagier formula for these members (recorded in Theorem~\ref{thm:noncriticalGZhigherweight}, which is Kobayashi's work in progress), one may easily deduce an $\mathscr{A}$-adic Gross--Zagier formula\footnote{See also~\cite{DisegniUniversal} where a similar formula for slope-zero families was proved independently.} for $L_p(\f_{/K},s)$, expressing its derivative with respect to the cyclotomic variable as the $\mathscr{A}$-adic height of the universal Heegner cycle (see Theorem~\ref{thm:twovarGZ} below). The  proof of Theorem~\ref{thm:padicGZoverKIntro} follows, on specializing this statement to weight $k$.

Let $g=\sum_{n=1}a_n(g)q^n\in S_{2r}(\Gamma_0(N))$ be a normalized eigenform. We let $a,b\in \overline{\QQ}$ denote the roots of its Hecke polynomial $X^2-a_p(g)X+p^{2r-1}$ at $p$. Suppose that $v_p(\iota_p(a_p(g)))>0$ and assume that
$$0<v_p(\iota_p(b))\leq v_p(\iota_p(a))\,.$$
 Let $g^b \in S_{2r}(\Gamma_0(Np))$ denote the $p$-stabilization corresponding to the Hecke root $b$. Kobayashi's forthcoming result (Theorem~\ref{thm:noncriticalGZhigherweight} below) proves a $p$-adic Gross--Zagier formula for the $p$-stabilization $g^b$ alone. This is sufficient for our purposes; moreover, the method we present here (without any modification whatsoever) allows one to deduce the following $p$-adic Gross--Zagier formula (Theorem~\ref{THM_GeneralizeKobayashiToBothPadicLFunctions} below) at every non-$\theta$-critical point $x$ on the eigencurve of tame level $N$, that admits a neighborhood with a dense set of crystalline classical points (e.g. any crystalline non-$\theta$-critical classical point $x$ verifies this property).
 \begin{theorem}
\label{THM_GeneralizeKobayashiToBothPadicLFunctions}
Suppose $x$ is any non-$\theta$-critical point of weight $w$ on the eigencurve of tame level $N
\geq 3$, that admits a neighborhood  with a dense set of crystalline classical points. Set $L_p(x,s):=L_p^{\RS}(\mathbf{F},\kappa,s)\vert_x$, where $\mathbf{F}$ is any Coleman family over a sufficiently small neighborhood of $x$ and finally $L_p^{\RS}(\mathbf{F},\kappa,s)$ is as in Definition~\ref{THMloefflersinterpolation}. Then,
$$\frac{d}{ds}L_p(x,s)\vert_{s=\frac{w}{2}}=H_{x,K}(\mathscr{Z}_{x},\mathscr{Z}_{x})\,.$$
Here, $H_{x,K}$ is the specialization of the height pairing $H_{\mathbf{F},K}$ (given as in Definition~\ref{DEF_AmiceTransformOfUniversalHeight}) to $x$ and likewise, $\mathscr{Z}_x$ is the specialization of the universal Heegner cycle $\mathscr{Z}_{\mathbf{F}}$ to $x$. In particular, if $f\in S_k(\Gamma_0(N))$ is a classical newform and $\lambda$ is a $\varphi$-eigenvalue on $\Dcris (V_f)$ such that $f^\lambda$ is non-$\theta$-critical, then 
$$\displaystyle{\frac{d}{ds}L_{p,\lambda}^{\rm Kob}(f_{/K},s)\big{|}_{s=\frac{k}{2}}={\left(1-\frac{p^{\frac{k}{2}-1}}{\lambda}\right)^{4}}\cdot  \frac{h^{\textup{Nek}}_{\lambda,K}(z_f,z_f)}{(4|D_K|)^{\frac{k}{2}-1}}}.$$
\end{theorem}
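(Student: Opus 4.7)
The plan is to upgrade the non-critical-slope identity of Theorem~\ref{thm:noncriticalGZhigherweight} (Kobayashi) to an equality of two rigid-analytic functions on an affinoid neighborhood of $x$ in the eigencurve, and then specialize. First I would fix a Coleman family $\mathbf{F}$ over a sufficiently small affinoid $\mathscr{A}$ around $x$, chosen so that the crystalline classical specializations of $\mathbf{F}$ which are non-$\theta$-critical and of non-critical slope (so that Kobayashi's theorem applies to them) form a dense subset of $\mathscr{A}$; this is possible by the assumption on $x$.

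Next I would assemble the three $\mathscr{A}$-adic objects required: the two-variable $p$-adic $L$-function $L_p^{\RS}(\mathbf{F},\kappa,s)$ of Loeffler--Zerbes (recalled in Section~\ref{sec:padicLfunctions}); Benois's $\mathscr{A}$-adic cyclotomic height pairing $H_{\mathbf{F},K}$, which specializes at each classical point through the commutative diagram \eqref{eqn:comaprisonofheights}; and the universal Heegner class $\mathscr{Z}_{\mathbf{F}}$ constructed in \cite{JLZHC, BLGHCpatching}. A crucial feature encoded in \eqref{eqn:comaprisonofheights} is that the specialization of $H_{\mathbf{F},K}$ at a classical point differs from the corresponding Nekov\'a\v{r} pairing by the explicit Poincar\'e-duality correction factor of Proposition~\ref{prop:comparisionoftwopoincarepairings}; this is precisely the same non-classical-interpolatable factor that distinguishes $L_p^{\RS}$ from $L_{p,\lambda}^{\rm Kob}$ at classical specializations, and the matching of these factors is the technical core of the argument.

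The main step is then to establish the $\mathscr{A}$-adic identity
$$\frac{d}{ds}L_p^{\RS}(\mathbf{F},\kappa,s)\Bigl|_{s=\kappa/2}\;=\;H_{\mathbf{F},K}(\mathscr{Z}_{\mathbf{F}},\mathscr{Z}_{\mathbf{F}}).$$
Both sides are rigid-analytic on $\mathscr{A}$: the left by construction of $L_p^{\RS}$, the right because $H_{\mathbf{F},K}$ is bilinear over the coordinate ring of $\mathscr{A}$ and $\mathscr{Z}_{\mathbf{F}}$ is an $\mathscr{A}$-adic cohomology class. At each point $y$ of the dense locus, unwinding the interpolation relations of all three $\mathscr{A}$-adic objects yields, after cancellation of the matched Poincar\'e/adjoint factors, exactly the classical equality provided by Theorem~\ref{thm:noncriticalGZhigherweight}. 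By density, the two rigid-analytic functions coincide on $\mathscr{A}$, and specializing at $x$ establishes the first claim. For the final assertion about a classical newform $f$ and a $\varphi$-eigenvalue $\lambda$ such that $f^\lambda$ is non-$\theta$-critical, take $x$ to be the point on the eigencurve corresponding to $f^\lambda$; the explicit Euler factor $(1-p^{k/2-1}/\lambda)^4$ and the denominator $(4|D_K|)^{k/2-1}$ then fall out of the interpolation formulas relating $L_p(x,s)$ to $L_{p,\lambda}^{\rm Kob}(f_{/K},s)$ and $\mathscr{Z}_x$ to the Heegner cycle $z_f$.

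I expect the principal technical obstacle to lie in the precise bookkeeping of all the Euler-type and Poincar\'e-duality factors at a density point $y$: one must verify that the adjoint-type factor appearing in the specialization of $H_{\mathbf{F},K}$, the multiplier relating $\mathscr{Z}_y$ to $z_{\mathbf{F}_y}$, and the interpolation factor relating $L_p^{\RS}(\mathbf{F},\kappa,s)\vert_y$ to $L_{p,\lambda_y}^{\rm Kob}((\mathbf{F}_y)_{/K},s)$ combine to reproduce exactly Kobayashi's formula at $y$. This is where the comparison of the two Poincar\'e pairings (Proposition~\ref{prop:comparisionoftwopoincarepairings}) is indispensable; once this matching is carried out, the passage from pointwise identities on the dense locus to the $\mathscr{A}$-adic identity and thence to the statement at $x$ is a formal density argument.
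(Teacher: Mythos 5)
Your proposal takes essentially the same route as the paper: the $\mathscr{A}$-adic identity you formulate is exactly Theorem~\ref{thm:twovarGZ}, proved by the density argument you describe (via Corollary~\ref{cor:padicGZoverKThatInterpolates} and the density of arithmetic weights $\kappa\in I\cap\ZZ_{\geq 2k}$, where the constant-slope property of the family guarantees that Kobayashi's small-slope hypothesis $2v_p(\ubeta(\kappa))<\kappa-1$ holds), with the matching of Euler and Poincar\'e correction factors encoded in the diagram \eqref{eqn:comaprisonofheights} and in the definition of $L_{p,\ubeta(\kappa)}^{\rm Kob}$. You have correctly identified that matching as the technical core of the argument.
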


\subsection*{Acknowledgements} The authors thank Daniel Disegni, Shinichi Kobayashi, David Loeffler and Barry Mazur for very helpful conversations. K.B.\ was hosted at Harvard University while preparing this article and its companion. He thanks the Mathematics Department at Harvard University for their amazing hospitality. R.P.\ thanks MPIM-Bonn for their strong support and hospitality throughout his extended visit there. S.S. thanks Vytas Pa\v{s}k$\bar{\mbox{u}}$nas for his unflagging moral support and helpful conversations.  
K.B.\ has received funding from the European Union's Horizon 2020 research and innovation programme under the Marie Sk\l odowska-Curie Grant Agreement No.\ 745691 (CriticalGZ).  R.P.\ acknowledges support from NSF grant DMS-1702178 as well as a fellowship from the Simons Foundation. S.S.\ acknowledges financial support from DFG/SFB. The authors thank the anonymous referees for their comments and suggestions, which guided us towards a significant improvement on an earlier version of this article.

\section{Notation and Set up}
\label{subsec:notationsetup}
For any field $L$, we let $\overline{L}$ denote a fixed separable closure and let $G_L:=\Gal(\overline{L}/L)$ denote its Galois group. 

For each prime $\lambda$ of a number field $F$, we fix a decomposition group at $\lambda$ and identify it with $G_{\lambda}:=G_{F_{\lambda}}$. We denote by $I_{\lambda}\subset G_\lambda$ the inertia subgroup. In the main body of our article, we will only work with the case when $F=\QQ$ or $F=K$ (the imaginary quadratic field we have fixed above). For any finite set of places $S$ of $F$, we denote by $F_S$ the maximal extension of $F$ unramified outside $S$ and set $G_{F,S}:=\Gal(F_S/F)$.

We set $\mathbb{C}_p:=\widehat{\overline{\QQ}}_p$, the $p$-adic completion of $\overline{\QQ}_p$. We fix embeddings $\iota_\infty: \overline{\QQ}\hookrightarrow \mathbb{C}$ and $\iota_p: \overline{\QQ}\hookrightarrow \mathbb{C}_p$. When the prime $p$ is assumed to split in the imaginary quadratic field $K$, we let $\fp$  denote the prime of $K$ corresponding to the embedding $\iota_p$.

We denote by $v_p\,:\,\mathbb{C}_p \ra \mathbb R \cup\{+\infty\}$ the $p$-adic valuation on $\mathbb{C}_p$ which is normalized by the requirement that $v_p(p)=1$. Set $\vert x\vert_p=  p^{-v_p(x)}.$ 
 
 We fix a system $\varepsilon=(\zeta_{p^n})_{n\geqslant 1}$ of primitive $p^n$th roots of the unity in $\overline{\QQ}$ such that $\zeta_{p^{n+1}}^p=\zeta_{p^n}$ for all $n$. We set  $\Gamma_\cyc=\mathrm{Gal}(\QQ(\zeta_{p^{\infty}})/\mathbb Q)$ and denote by 
 $$\chi_\cyc:\,\Gamma_\cyc \stackrel{\sim}{\lra} \ZZ_p^\times$$ 
 the cyclotomic character. 
The group $\Gamma_\cyc$ factors canonically as
$\Gamma_\cyc=\Delta\times \Gamma$ where $\Delta =\mathrm{Gal}(\QQ(\zeta_p)/\QQ)$
and $\Gamma=\mathrm{Gal}(\QQ(\zeta_{p^{\infty}})/\mathbb Q(\zeta_p))$. We let $\omega$ denote the Teichm\"uller character (that factors through $\Delta$) and set $\left\langle \chi_\cyc\right\rangle:=\omega^{-1}\chi_\cyc$. We let $\LL:=\ZZ_p[[\Gamma]]$. We write $\Lambda^\iota$ to denote the free $\LL$-module of rank one, on which $G_\QQ$ acts via 
\begin{align*} 
G_\QQ \twoheadrightarrow \Gamma&\stackrel{\iota}{ \lra} \Gamma\hookrightarrow \LL^\times\\
\iota:\gamma&\longmapsto \gamma^{-1}
\end{align*}

By slight abuse of notation, we denote all the objects ($\Gamma_\cyc,\chi_\cyc, \Delta,\Gamma, \omega,\LL$ and $\iota$) introduced in the previous paragraph but defined over the base field $\QQ_p$ (in place of $\QQ$) with the same set of symbols.

For any a topological group $G$ and a module $M$ that is equipped with a continuous $G$-action, we shall write $C^{\bullet}(G,M)$ for the complex of continuous cochains of $G$ with coefficients in $M$.

Let $S$ be a finite set of places of $\QQ$ that contains $p$ and the archimedean place. If $V$ is a $p$-adic representation of $G_{\QQ,S}$ with coefficients in an affinoid algebra ${A}$, we shall denote by $\DdagrigAA(V)$ the $(\varphi,\Gamma_\cyc)$-module associated to the restriction of $V$ to the decomposition group at $p$.

Let $\Sigma$ denote the set of rational primes that divides $Np$, together with the archimedean place. We denote the set of places of $K$ above those in $\Sigma$ also by $\Sigma$.

For our fixed imaginary quadratic field $K$, we let $\cO$ denote the maximal order of $K$. For any positive integer $c$, let $\cO_c:=\ZZ+c\cO$ denote the order of conductor $c$ in $K$ and let $H_c$ denote the ring class field of $K$ of $\cO_c$. Write $H_c^{(Np)}$ for the maximal extension of $H_c$ outside $Np$ and $\mathfrak{G}_c:=\Gal(H_c^{(Np)}/H_c)$. Fix a positive integer $c$ coprime to $N$. We also set $L_{cp^s}:=H_{cp^s}(\mu_{p^s})$.

For any eigenform $g$, we shall write $g^K$ in place of $g\otimes\epsilon_K$ for its twist by the quadratic character $\epsilon_K$ associated to $K/\QQ$.

For each non-negative real number $h$, we let $\mathcal{D}_{h}$ denote the $\QQ_p$-vector space of $h$-tempered distributions on $\ZZ_p$ and set $\mathcal{D}_{\infty}:=\cup_{h} \mathcal{D}_{h}$. We also let $\mathcal{D}$ denote the $\LL$-algebra of $\QQ_p$-valued locally analytic distributions on $\ZZ_p$. The natural map $\mathcal{D}_h\ra \mathcal{D}$ is an injection (for every $h$) since locally analytic functions are dense in the space of continuous functions.

We let $\mathscr{R}_+$ denote the $\QQ_p$-algebra of analytic functions on the open unit ball. In explicit terms, 
$$\mathscr{R}_+:=\left\{\sum_{n=0}^\infty c_n X^n: \lim_{n\ra \infty}|c_n|_p s^n=0 \hbox{ for every } s\in[0,1)\right\}.$$
According to \cite[Proposition 1.2.7]{PRLocalIwasawa1994}, the algebra $\mathcal{D}$ is naturally isomorphic via the Amice transform to $\mathscr{R}_+$. 

On fixing a topological generator $\gamma$ of $\Gamma$ (which in turn fixes isomorphisms $\Gamma\cong \ZZ_p$ and $\LL\cong \ZZ_p[[X]]$), we may define the $\QQ_p$-algebras
$$\mathcal{D}_{h}(\Gamma)\subset \mathcal{D}_{\infty}(\Gamma)\subset \mathcal{D}(\Gamma)$$
of distributions on $\Gamma$. We also set 
$$\mathscr{H}:=\{f(\gamma-1): f\in \mathscr{R}_+\}\subset \QQ_p[[\Gamma]]$$
(so that $\mathscr{H}\cong \mathscr{R}_+$ via $\gamma\mapsto 1+X$). For $H=\sum_{n=0}^{\infty}c_n(H)(\gamma-1)^n\in \mathscr{H}$, we shall set 
${\displaystyle H^\prime:=\sum_{n=0}^{\infty}\frac{(n+1)c_{n+1}(H)}{\log_p\chi_\cyc(\gamma)}(\gamma-1)^n\in \mathscr{H}}$. Notice then that $\mathds{1}(H^\prime)={c_1(H)}\big{/}{\log_p\chi_\cyc(\gamma)}$ does not depend on the choice of $\gamma$.

We shall equip $\mathcal{D}_h(\Gamma)$ ($0\leq h\leq \infty$) and $\mathscr{H}$ with a $\LL$-module and Galois module structure via the compositum of the maps
$$\LL\hookrightarrow \LL[1/p]=\mathcal{D}_0(\Gamma)\hookrightarrow \mathcal{D}(\Gamma)\stackrel{\rm{Amice}}{\hookrightarrow} \mathscr{H}\,.$$

We set $\mathscr{H}_{\mathscr{A}}:=\mathscr{H}\widehat{\otimes}\mathscr{A}$ given a $\QQ_p$-affinoid $\mathscr{A}$. We let $\mathscr{H}^\iota:=\mathscr{H}\otimes_{\LL}\LL^\iota$ and similarly define $\mathscr{H}_{\mathscr{A}}^\iota$.

\subsection{Modular curves, Hecke correspondences and the weight space}
\label{subsubsec:modularcurvesheckeweight}

For each non-negative integer $s\in \ZZ_{\geq2}$, we let $Y_s$ denote the affine modular curve over $\mathbb{Q}$ of level $\Gamma_0(N)\cap \Gamma_1(p^s)$. It parametrizes triples $(E,C,\varpi)$ where $E$ is an elliptic curve, $C$ is a cyclic group of $E$ of order $N$ and $\varpi$ is a point of order $p^s$. We let $X_s{}_{/\QQ}$ denote its compactification and $J_s:={\rm Jac}(X_s)$. 

For each $s$, we let $\mathfrak{H}_{s} \subset {\rm End}(J_s)$ denote $\ZZ_p$-the algebra generated by all Hecke operators $\{T_{\ell}\}_{\ell\nmid Np}$ together with $\{U_\ell\}_{\ell\mid Np}$ and the diamond operators $\{\langle m \rangle: m\in (\ZZ/p^s\ZZ)^\times\}$. 

We set $\LL_{\rm wt}:=\ZZ_p[[\ZZ_p^\times]]$.  
For $z\in \ZZ_p^\times$, we let $[z]\in \LL_{\rm wt}$ denote the group-like element. The Hecke algebra $\mathfrak{H}_{s}$ comes equipped with a $\LL_{\rm wt}$-module structure via $[z]\mapsto \langle z \rangle$. We let $\mathfrak{m}_s$ denote the maximal ideal of $\mathfrak{H}_s$  that is determined by the residual representation $\overline{\rho}_f$ associated to our fixed eigenform $f$. When there is no risk of confusion, we shall abbreviate $\mathfrak{m}:=\mathfrak{m}_s$.

 Following \cite{how2}, we define the critical weight character $\Theta:\Gamma_\cyc\ra \LL_{\rm wt}$ (centered at weight $k$) by setting
$$\Theta(\sigma):=\omega^{\frac{k}{2}-1}(\sigma)[\langle\chi_\cyc\rangle^{\frac{1}{2}}(\sigma)]$$
for $\sigma\in \Gamma_\cyc$, where $\omega:\Gamma_\cyc \ra \ZZ_p^\times$ is the Teichm\"uller character. We let $\LL_{\rm wt}^{\dagger}$ denote $\LL_{\rm wt}$ as a module over itself, but allowing $G_\QQ$ act via the character $\Theta^{-1}$. Let $\xi\in \LL_{\rm wt}^\dagger$ denote the element that corresponds to $1 \in \LL_{\rm wt}$.

For any $\mathfrak{H}_s$-module $M$ on which $G_\QQ$ acts, we shall write 
$$M^{\dagger}:=M\otimes_{\LL_{\rm wt}}\LL^{\dagger}_{\rm wt}$$ 
which we equip with the diagonal $G_\QQ$-action. Here the tensor product is over $\LL_{\rm wt}$ and its action on $M$ is given via the morphism $\LL_{\rm wt}\ra \mathfrak{H}_s$ (the diamond action).

\subsection{The Coleman family ($\hbox{\underline{\bf{f}}},\ubeta$)}
\label{subsubsec:colemanfamilyintro}
We fix an isomorphism $e_{k-2}\LL_{\rm wt} \cong\ZZ_p[[w]]$ and let $\mathscr{W}$  denote the weight space over $\QQ_p$ parametrizing continuous characters of $\Zp^\times$ (so that for an affinoid $\Qp$-algebra $A$, we have $\mathscr{W}(A)={\rm Hom}_{\rm cts}(\ZZ_p^\times,A^\times)$). Let $U=\overline B(k,p^{-a})\subset \mathscr W$ denote the closed disk about $k$ of radius $p^{-a}$ for some positive integer $a$, then $U$ has the structure of an affinoid space over $\QQ_p$.  We let $\mathcal O(U)$ denote the $E$-algebra of analytic functions on the affinoid $U$; the $E$-algebra $\cO(U)$  is isomorphic to the Tate algebra $\mathscr{A}=E\left\langle\left \langle\displaystyle{w}/{p^r}\right \rangle\right \rangle.$ 

For each $\kappa \in k+p^{r-1}\ZZ_p$, we shall denote by $\psi_{\kappa}$ the morphism 
\begin{align*}
\psi_\kappa:\, \scrA&\lra E\\
w &\longmapsto  (1+p)^{\kappa-k}-1\,.
\end{align*}

Consider the sequence $I=\{ \kappa \in \ZZ_{\geq 2} \mid \kappa\equiv k\pmod{(p-1)p^{r-1}}\}$ of integers and let
$$\f=\underset{n=1}{\overset{\infty}\sum} \mathbf{a}_nq^n \in \mathscr{A}[[q]]$$
denote a $p$-adic family of cuspidal eigenforms passing through $f^\beta$, in the sense of Coleman~\cite{coleman}. 
This means that for every  point  $\kappa\in I$, the formal expression
$${\f}(\kappa):=\underset{n=1}{\overset{\infty}\sum} \psi_{\kappa}(\mathbf{a}_n)q^n$$ 
is the $q$-expansion of a cuspidal eigenform of level $\Gamma_0(Np)$ and weight $\kappa$, with the additional property that ${\f}(k)=f.$ Let us denote by $\ubeta=\mathbf{a}_p$ for the $U_p$-eigenvalue for its action on $\f$, so that we have $\ubeta(k)=\beta$.  By shrinking the closed ball $U$ if necessary  and using \cite[Corollary B5.7.1]{coleman}, we may (and we will henceforth) assume that ${\f}$ is a family of constant slope $k-1$ (in particular, ${{\f}}$ specialises to a classical form of weight $w$ and slope $k-1$ at any integer weight $w>k$ lying in $U$).

Let $W_{\f}$ denote the big Galois representation associated to the family ${\f}$ with coefficients in $\scrA=\mathcal O(U)$. We define its twist $V_{\f}:=W_{\f}\otimes_{\LL_{\rm wt}}\LL_{\rm wt}^{\dagger}$. We recall that $W_{\f}$ comes equipped with a $\LL_{\rm wt}$-module structure via the diamond action. Note then that $V_{\f}$ is self-dual in the sense that we have a $G_{\QQ,\Sigma}$-equivariant symplectic pairing (that we denote by $\langle\,,\,\rangle_{Np^\infty}$)
\begin{equation}
\label{pairingforVf}
\langle\,,\,\rangle_{Np^\infty}\,:\,V_{{\f}}\times V_{{\f}} \lra \scrA(1)\,.
\end{equation}  

\section{Selmer complexes and $p$-adic heights in families}
\label{sec:padicheights}

\subsection{Cohomology of $(\varphi,\Gamma_\cyc)$-modules}
In this subsection, we shall review the cohomology of $(\varphi,\Gamma_\cyc)$-modules.  Fix a topological generator $\gamma$ of  $\Gamma$. Recall that $\mathscr{A}$ stands for the affinoid algebra over $E$ and $\RR_{\mathscr A}$ for the relative Robba ring over $\mathscr{A}$. For any $(\varphi,\Gamma_\cyc)$-module $\mathbb D$ over $\RR_{\mathscr{A}}$, consider the Fontaine--Herr complex    
\[
C^{\bullet}_{\varphi,\gamma}(\mathbb D)\,\,:\,\, 
\mathbb{D}^{\Delta}  \xrightarrow{d_0} 
\mathbb{D}^{\Delta} \oplus \mathbb{D}^{\Delta} 
\xrightarrow{d_1} \mathbb{D}^{\Delta},
\]
where  $d_0(x)=((\varphi-1)(x), (\gamma-1)x)$ and $d_1(y,z)=(\gamma-1)(y)-(\varphi-1)(z)$
(for further details and properties, see \cite{H1, Liu, KPX}). We define 
\[
H^i(\mathbb D):=H^i(C^{\bullet}_{\varphi,\gamma}(\mathbb D)).
\] 
It follows from \cite[Theorem 0.2]{Liu} and \cite[Theorem 4.4.2]{KPX} that $H^i(\mathbb D)$ is a finitely  generated  $\mathscr{A}$-module for $i=0,1,2$. 

In the particular case when $\mathbb D=\DdagrigA (V_{\f})$, it follows by \cite[Theorem 0.1]{Liu} and \cite[Theorem 2.8]{Pot13}) that there exist canonical (up to the choice of $\gamma$) and functorial isomorphisms
\begin{equation}
\label{formula:computationgaloiscohviaphigamma}
H^i(\DdagrigA (V_{\f}))\simeq H^i(\QQ_p,V_{\f}).
\end{equation}
for all $i$. The following proposition is due to Benois~ \cite[Proposition 2.4.2]{Ben14b} and refines the isomorphism \eqref{formula:computationgaloiscohviaphigamma}. Set
$$K_p^{\bullet}(V_{\f}):=\mathrm{Tot} \left (C^{\bullet}\left(G_p,V_{\f}\otimes_{\mathscr{A}}\widetilde{\mathbb{B}}_{\rm{rig},\mathscr{A}}^{\dagger}\right) \xrightarrow{\varphi-1} C^{\bullet}\left(G_p,V_{\f}\otimes_{\mathscr{A}}\widetilde{\mathbb{B}}_{\rm{rig},\mathscr{A}}^{\dagger}\right)
\right ),$$
where $\widetilde{\mathbb{B}}_{\rm{rig},\mathscr{A}}^{\dagger}$ is the ring of $p$-adic periods introduced by Berger in \cite{Ber02}. 

\begin{proposition}[Benois]
\label{proposition:quasi-isomorphisms} We have a diagram 
\begin{equation*}
\xymatrix{
C^{\bullet}(G_p,V_{\f}) \ar[r]^{\xi}_{\simeq} &K_p^{\bullet}(V_{\f}),\\
& C^{\bullet}_{\varphi,\gamma}(\Ddagrig (V_{\f})) \ar[u]^{\eta}_{\simeq}}
\end{equation*}
where the maps $\eta$ and $\xi$ are both quasi-isomorphisms.
\end{proposition}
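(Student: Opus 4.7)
The plan is to construct both quasi-isomorphisms from the two foundational inputs of $p$-adic Hodge theory in the affinoid-family setting: the fundamental exact sequence
\[
0 \lra \QQ_p \lra \widetilde{\mathbb B}^\dagger_{{\rm rig}} \xrightarrow{\varphi-1} \widetilde{\mathbb B}^\dagger_{{\rm rig}} \lra 0,
\]
and the overconvergence theorem (Cherbonnier--Colmez in the classical case, extended by Berger--Colmez and Kedlaya--Pottharst--Xiao to affinoid families) identifying
$\Ddagrig(V_{\f}) \simeq \bigl(V_{\f}\otimes_{\mathscr{A}} \widetilde{\mathbb B}^\dagger_{{\rm rig},\mathscr{A}}\bigr)^{H_{\QQ_p}}$,
where $H_{\QQ_p}:=\ker(G_{\QQ_p}\twoheadrightarrow \Gamma_\cyc)$. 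Both maps $\xi$ and $\eta$ arise from natural morphisms of complexes, and the content lies in checking they induce isomorphisms on cohomology.

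For $\xi$, I would tensor the fundamental sequence (after base change to $\mathscr{A}$ and using flatness of $V_{\f}$ over $\mathscr{A}$) with $V_{\f}$ to obtain a short exact sequence of continuous $G_p$-modules, and then apply $C^\bullet(G_p,-)$. The resulting total complex of the mapping cone of $\varphi-1$ is by definition $K_p^\bullet(V_{\f})$, and the kernel of $\varphi-1$ on $V_{\f}\otimes_{\mathscr{A}} \widetilde{\mathbb B}^\dagger_{{\rm rig},\mathscr{A}}$ is $V_{\f}$ itself. The long exact sequence in continuous cohomology then produces a quasi-isomorphism $\xi$ in the standard way, provided one knows that the $\varphi-1$ map is surjective on cochains, which reduces to Tate's acyclicity of $\widetilde{\mathbb B}^\dagger_{{\rm rig}}/\QQ_p$ in the relative setting.

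For $\eta$, after invoking the overconvergence identification above, I would apply a Hochschild--Serre argument to the extension $1\to H_{\QQ_p} \to G_{\QQ_p} \to \Gamma_\cyc \to 1$. The key vanishing input is that the higher continuous cohomology of $V_{\f}\otimes_{\mathscr{A}} \widetilde{\mathbb B}^\dagger_{{\rm rig},\mathscr{A}}$ under $H_{\QQ_p}$ is trivial; this is the relative Tate--Sen formalism of Kedlaya--Pottharst--Xiao. Granting this, the double complex computing $K_p^\bullet(V_{\f})$ collapses to the $\Gamma_\cyc$-cohomology of the $\varphi$-module $\Ddagrig(V_{\f})$. Using that $\Gamma_\cyc=\Delta\times\Gamma$ with $|\Delta|$ prime to $p$, taking $\Delta$-invariants is exact, and then computing $\Gamma$-cohomology via the two-term Koszul complex for a topological generator $\gamma$ yields exactly the Fontaine--Herr complex $C^\bullet_{\varphi,\gamma}(\Ddagrig(V_{\f}))$, producing the quasi-isomorphism $\eta$.

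The main obstacle is making both arguments work uniformly in the affinoid base $\mathscr{A}$: the surjectivity of $\varphi-1$ on $V_{\f}\otimes_{\mathscr{A}} \widetilde{\mathbb B}^\dagger_{{\rm rig},\mathscr{A}}$-valued cochains and the vanishing of higher $H_{\QQ_p}$-cohomology are both delicate statements requiring the relative Tate--Sen method, together with some care regarding completed tensor products over $\mathscr{A}$ and the topology on $\widetilde{\mathbb B}^\dagger_{{\rm rig},\mathscr{A}}$. Once these inputs are available from \cite{KPX} and the works of Liu and Pottharst cited, the remaining homological algebra parallels Herr's original treatment of a single Galois representation and formally yields the asserted diagram.
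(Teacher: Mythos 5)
The paper does not prove this statement; it is attributed to Benois and cites~\cite[Proposition 2.4.2]{Ben14b} for the proof, so there is no in-text argument to compare against. Your reconstruction does follow the same overall strategy as Benois (fundamental exact sequence for $\xi$; Tate--Sen/overconvergence plus Hochschild--Serre for $\eta$), and the outline for $\xi$ is essentially sound, granting surjectivity of $\varphi-1$ on $\widetilde{\mathbb{B}}^\dagger_{\mathrm{rig},\mathscr{A}}$ in the relative setting.

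However, your argument for $\eta$ has a genuine gap. After invoking the relative Tate--Sen acyclicity of $H_{\QQ_p}$ and collapsing the Hochschild--Serre spectral sequence, what you obtain is the $\Gamma_\cyc$-cohomology of the $H_{\QQ_p}$-invariants
$\left(V_{\f}\otimes_{\mathscr{A}}\widetilde{\mathbb{B}}^\dagger_{\mathrm{rig},\mathscr{A}}\right)^{H_{\QQ_p}}$,
and by the overconvergence theorem this module is $\Ddagrig(V_{\f})\otimes_{\mathbb{B}^\dagger_{\mathrm{rig},\QQ_p}}\widetilde{\mathbb{B}}^{\dagger,H_{\QQ_p}}_{\mathrm{rig}}$, a module over the \emph{perfect} (tilde) Robba ring, not $\Ddagrig(V_{\f})$ itself. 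Your claim that the double complex ``collapses to the $\Gamma_\cyc$-cohomology of the $\varphi$-module $\Ddagrig(V_{\f})$'' therefore skips a substantive step: one still has to descend from the perfect Robba ring to the imperfect one, i.e., show that the natural map
\[
C^{\bullet}_{\varphi,\gamma}\bigl(\Ddagrig(V_{\f})\bigr)\lra C^{\bullet}_{\varphi,\gamma}\Bigl(\Ddagrig(V_{\f})\otimes_{\mathbb{B}^\dagger_{\mathrm{rig},\QQ_p}}\widetilde{\mathbb{B}}^{\dagger,H_{\QQ_p}}_{\mathrm{rig}}\Bigr)
\]
is a quasi-isomorphism. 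This descent is not a formal consequence of the two inputs you explicitly list (surjectivity of $\varphi-1$ and vanishing of higher $H_{\QQ_p}$-cohomology); it is a separate theorem, going back to Cherbonnier--Colmez in the classical case and established in the affinoid-family setting in~\cite{KPX} as part of (but not automatically subsumed by the mere statement of) the Tate--Sen formalism. Without this step, $\eta$ lands in the wrong target complex. Adding this descent as an explicit third input would close the gap and make your outline match the actual structure of Benois' argument.
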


\subsection{Selmer complexes}
\label{subsec:selmercomplexes}
\subsubsection{Local conditions at primes above $p$}
A result of Liu \cite[Theorem~0.3.4]{Liu} (see also \cite[\S3.3.2]{BenoisBuyukbodukCKPRVolume} for a detailed summary) shows that the $(\varphi,\Gamma_\cyc)$-module $\DdagrigA (V_{\f})$ admits a triangulation over $\mathscr{A}$. In more precise terms, the module $\DdagrigA (V_{\f})$ sits in an exact sequence
\begin{equation}
\label{eqn:triangulationoverA}
0\rightarrow \mathbb D_{\ubeta} \rightarrow \DdagrigA (V_{\f})
\rightarrow \widetilde{\mathbb D}_{\ubeta}\rightarrow 0,
\end{equation}
where both $\mathbb D_{\ubeta}$ and $\widetilde{\mathbb D}_{\ubeta}$ are $(\varphi,\Gamma_\cyc)$-modules of rank $1$, and where $\mathbb D_{\ubeta}$ is the $(\varphi,\Gamma_\cyc)$-module of rank one associated to the character $\delta_{\ubeta}:\QQ_p^\times\to \mathscr{A}^\times$ given by $\delta_{\ubeta}(p)=\ubeta$ and $\delta_{\ubeta}\,{\vert_{\ZZ_p^\times}}=1$.

Recall that we have assumed $p=\p\p^c$ splits, so that $K_\q=\QQ_p$ for each $\q \in \{\p,\p^c\}$. We define $U_\q^+ (V_{\f},\mathbb D_{\ubeta}):=C^{\bullet}_{\varphi,\gamma}(\mathbb D_{\ubeta})$. On composing the quasi-isomorphism $\eta$ of Proposition~\ref{proposition:quasi-isomorphisms}  with the canonical morphism $U_\q^+ (V_{\f},\mathbb D_{\ubeta}) \rightarrow C^{\bullet}_{\varphi,\gamma}(\DdagrigA (V))$, we obtain a map 
\[
i_\q^+\,:\,U_\q^+ (V_{\f},\mathbb D_{\ubeta}) \lra K^{\bullet}_\q(V_{\f})
\]
where $K^{\bullet}_\q(V_{\f}):=\mathrm{Tot} \left (C^{\bullet}\left(G_\q,V_{\f}\otimes_{\mathscr{A}}\widetilde{\mathbb{B}}_{\rm{rig},\mathscr{A}}^{\dagger}\right) \xrightarrow{\varphi-1} C^{\bullet}\left(G_\q,V_{\f}\otimes_{\mathscr{A}}\widetilde{\mathbb{B}}_{\rm{rig},\mathscr{A}}^{\dagger}\right)\right)$ as above.

\subsubsection{Local conditions away from $p$}
For each non-archimedean prime $\lambda\in \Sigma\setminus \{\p,\p^c\}$ of $K$, we define the complex 
\[
U_{\lambda}^+(V)= \left [V^{I_{\lambda}}_{\f} \xrightarrow{{\rm Fr}_{\lambda}-1} V_{\f}^{I_{\lambda}}\right ],
\]
which is concentrated in degrees $0$ and $1$ and where ${\rm Fr}_{\lambda}$ denotes the geometric Frobenius.
We define
\[
i_{\lambda}^+\,:\,U_{\lambda}^+(V_{\f}) \lra C^{\bullet}(G_{\lambda},V_{\f})
\]
by setting
\[
\begin{aligned}
&i_{\lambda}^+(x)= x &&\text{in degree $0$,}\\
&i_{\lambda}^+(x)({\rm Fr}_{\lambda})=x &&\text{in degree $1$.}
\end{aligned}
\]
In order to have a uniform notation for all primes in $\Sigma$ we set $K_{\lambda}^{\bullet}(V_{\f}):=C^{\bullet}(G_{\lambda},V_{\f})$ and $U_{\lambda}^+(V_{\f},\mathbb D_{\ubeta}):=U_{\lambda}^+(V_{\f})$ for a non-archimedean prime $\lambda\in \Sigma\setminus \{\p,\p^c\}$. Since we assume $p>2$, we may safely ignore the archimedean places. 
\subsubsection{The Selmer complex}
\label{subsubsec:selmercomplexbig}
We define  the complexes $K_\Sigma^{\bullet}(V_{\f}):=\underset{\lambda\in \Sigma}\bigoplus K_{\lambda}^\bullet(V_{\f})$
and $U^+_\Sigma (V_{\f},\mathbb D_{\ubeta}):=\underset{\lambda\in \Sigma}\bigoplus U_{\lambda}^+(V_{\f},\mathbb{D}_{\ubeta}).$  Observe that we have a diagram
\[
\xymatrix{
C^{\bullet}(G_{K,\Sigma},V_{\f}) \ar[r]^(.55){\res_\Sigma} &K_\Sigma^{\bullet}(V_{\f})\\
& U_\Sigma^+ (V_{\f},\mathbb D_{\ubeta}) \ar[u]^{i_\Sigma^+},}
\]
where $i_\Sigma^+=(i_{\lambda}^+)_{\lambda\in \Sigma}$ and $\res_\Sigma$ denotes the localization map. 
\begin{defn}
The Selmer complex associated to these data is defined as 
\[
S^{\bullet}(V_{\f_{/K}}, \mathbb D_{\ubeta})=\mathrm{cone} \left (C^{\bullet}(G_{K,\Sigma},V_{\f}) \oplus 
U_\Sigma^+(V_{\f},\mathbb D_{\ubeta}) \xrightarrow{\res_\Sigma-i_\Sigma^+} K_\Sigma^{\bullet}(V_{\f}) \right)[-1].
\]
 \end{defn}
\begin{defn}
We denote by  $\mathbf{R}\Gamma  (V_{\f_{/K}},\mathbb D_{\ubeta})$ the class of $S^{\bullet} (V_{\f_{/K}},\mathbb D_{\ubeta})$
in the  derived category of $\mathscr{A}$-modules  and denote by  
\[
H^i (V_{\f_{/K}},\mathbb D_{\ubeta}):= \mathbf{R}^i\Gamma (V_{\f_{/K}},\mathbb D_{\ubeta}).
\]
its cohomology. 
\end{defn}

\subsection{$\mathscr{A}$-adic cyclotomic height pairings}
\label{subsec:Aadicheights}
We provide in this section an overview of the construction of $p$-adic heights for $p$-adic representations 
over the affinoid algebra $\mathscr{A}$, following \cite{Ben14b}. We retain our previous notation and conventions.  

Let $J_{\mathscr{A}}$ denote the kernel of the augmentation map 
$$\mathscr{H}\,\widehat{\otimes}\,\scrA=:\mathscr R_{+,\scrA}\rightarrow \scrA$$
which is induced by $\gamma\mapsto 1$. Note that $J_{\scrA}=(\gamma-1)\mathscr R_{+,\scrA}$ and $J_{\scrA}/J_{\scrA}^2\simeq \scrA$  as $\scrA$-modules. The exact sequence 
\[
0\rightarrow V_{\f}\otimes J_{\scrA}/J_{\scrA}^2\lra V_{\f}\otimes  \mathscr{R}_{+,\scrA}/J_{\scrA}^2 \lra V_{\f}  \lra 0
\]
and the functorial behaviour of Selmer complexes under base change induces the Bockstein morphism
\[
\beta^{\mathrm{cyc}}_{V_{\f},\mathbb D_{\ubeta}}: \mathbf{R}\Gamma (V_{\f_{/K}},\mathbb D_{\ubeta}) \lra \mathbf{R}\Gamma  (V_{\f_{/K}},\mathbb D_{\ubeta})[1]\otimes_\scrA J_{\scrA}/J_{\scrA}^2.
\]

\begin{defn}
\label{definition:height pairing} The $p$-adic height pairing associated to the Coleman family $(\f,\ubeta)$ is defined as the morphism
\begin{multline*}
\mathfrak{h}_{\f,\ubeta}: \,
\mathbf{R}\Gamma(V_{\f_{/K}},\mathbb D_{\ubeta})\otimes_\scrA^{\mathbb L}\mathbf{R}\Gamma(V_{\f_{/K}},\mathbb D_{\ubeta}) \xrightarrow{\beta^{\mathrm{cyc}}_{V_{\f},\mathbb D_{\ubeta}}\,\otimes\,\mathrm{id}}\\
\left(\mathbf{R}\Gamma(V_{\f_{/K}},\mathbb D_{\ubeta})[1]\otimes J_{\scrA}/J_{\scrA}^2\right)\otimes_\mathscr{A}^{\mathbb L}\mathbf{R}\Gamma (V_{\f_{/K}},\mathbb D_{\ubeta})
\xrightarrow{\cup} J_{\scrA}/J_{\scrA}^2[-2]
\end{multline*}
where $\cup$ is the cup-product pairing 
$$\mathbf{R}\Gamma(V_{\f_{/K}},\mathbb D_{\ubeta})\otimes_\scrA^{\mathbb L}\mathbf{R}\Gamma(V_{\f_{/K}},\mathbb D_{\ubeta})\stackrel{\cup}{\lra}\mathscr{A}[-3]$$
which is induced from the  $G_{K,\Sigma}$-equivariant  symplectic pairing $\langle\,,\,\rangle_{Np^\infty}$ of \eqref{pairingforVf}.
\end{defn}

In the level of cohomology, $h_{\f,\ubeta}$ induces a pairing  
\[
\mathfrak{h}_{\f,\ubeta}^{1,1}:\, H^1(V_{\f_{/K}},\mathbb D_{\ubeta})\otimes_{\scrA} H^1(V_{\f_{/K}},\mathbb D_{\ubeta})\lra J_\scrA/J_\scrA^2.
\]

\begin{proposition}
\label{prop:symmetricpropertyofheights} 
The $\mathscr{A}$-adic height pairing $\mathfrak{h}_{\f,\ubeta}^{1,1}$ is symmetric.
\end{proposition}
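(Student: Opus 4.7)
The plan is to reproduce, in the $(\varphi,\Gamma_\cyc)$-module setting over the affinoid $\mathscr{A}$, the standard argument of Nekov\'a\v{r} (\emph{Selmer Complexes}, §11.2) that identifies the $\pm$-symmetry of a Bockstein-type height pairing with the $\pm$-symmetry of the coefficient pairing. Benois's construction of $\mathbf{R}\Gamma(V_{\f_{/K}},\mathbb D_{\ubeta})$ in \cite{Ben14b} is designed precisely so that all the formal ingredients of Nekov\'a\v{r}'s argument (graded commutativity of cup products, graded Leibniz rule for Bocksteins) survive on the Selmer-complex level.

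Concretely, I would first unpack the definition of $\frak{h}_{\f,\ubeta}^{1,1}$: for classes $x,y\in H^1(V_{\f_{/K}},\mathbb D_{\ubeta})$ it is realised as
\[
\frak{h}_{\f,\ubeta}^{1,1}(x,y)=\beta^{\mathrm{cyc}}_{V_{\f},\mathbb D_{\ubeta}}(x)\cup y,
\]
where $\cup$ is the cup product induced by the symplectic pairing $\langle\,,\,\rangle_{Np^\infty}$ of \eqref{pairingforVf} on Benois's Selmer complex, landing in $H^2(V_{\f_{/K}},\mathbb D_{\ubeta})\otimes J_\scrA/J_\scrA^2$ and projected to $J_\scrA/J_\scrA^2$ via the global trace map coming from Proposition~\ref{proposition:quasi-isomorphisms} and global duality. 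Then I would invoke two formal properties for this $\cup$: (a) graded commutativity, $x\cup y=(-1)^{\deg x\cdot \deg y}\,s_\ast(y\cup x)$, where $s$ is the swap on $V_{\f}\otimes V_{\f}$; and (b) the graded Leibniz identity $\beta^{\mathrm{cyc}}(x\cup y)=\beta^{\mathrm{cyc}}(x)\cup y+(-1)^{\deg x}\,x\cup \beta^{\mathrm{cyc}}(y)$, which is the standard derivation property of a Bockstein attached to a short exact sequence of coefficient rings.

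Now $\langle\,,\,\rangle_{Np^\infty}$ is symplectic, so $s_\ast=-\mathrm{id}$ on the induced pairing into $\mathscr{A}(1)$. Applying (a) in bidegree $(1,1)$ (total sign $(-1)^{1\cdot 1}\cdot(-1)=+1$) yields $x\cup y=y\cup x$, and similarly in bidegree $(2,1)$ the sign is $(-1)^{2\cdot 1}\cdot(-1)=-1$, so $\beta^{\mathrm{cyc}}(y)\cup x=-\,x\cup\beta^{\mathrm{cyc}}(y)$. Combining these two signs with (b), applied to the class $x\cup y\in H^2(V_{\f_{/K}}\otimes V_{\f_{/K}})\to H^2(\mathscr{A})$ — whose Bockstein image is controlled (in fact it vanishes in the image we care about, by the same duality / trace-compatibility argument as in Nekov\'a\v{r}) — one obtains
\[
\frak{h}_{\f,\ubeta}^{1,1}(x,y)-\frak{h}_{\f,\ubeta}^{1,1}(y,x) \;=\; \beta^{\mathrm{cyc}}(x)\cup y-\beta^{\mathrm{cyc}}(y)\cup x \;=\;\pm\,\beta^{\mathrm{cyc}}(x\cup y)\;=\;0.
\]

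The main obstacle is not the sign bookkeeping above, which is purely formal once the structure is in place, but rather the verification that Benois's Selmer complex $S^\bullet(V_{\f_{/K}},\mathbb D_{\ubeta})$ genuinely admits a cup product satisfying (a) and (b) at the cochain level. This requires working with explicit cochain models and tracking signs through the cone construction in the definition of $S^\bullet$, through the local terms $U_\q^+(V_{\f},\mathbb D_{\ubeta})$, and through the Fontaine--Herr-to-continuous-cochain comparison recorded in Proposition~\ref{proposition:quasi-isomorphisms}. All of this is done (for the classical Galois-cohomological side) in Nekov\'a\v{r}'s book and carried over to the $(\varphi,\Gamma_\cyc)$-module side in \cite{Ben14b}; our task is merely to observe that Benois's local conditions at $\p$ and $\p^c$ are self-orthogonal under $\langle\,,\,\rangle_{Np^\infty}$ (which follows from the rank-one triangulation \eqref{eqn:triangulationoverA} together with the symplectic nature of the pairing), which is what makes the cup product descend to the Selmer complex in the first place.
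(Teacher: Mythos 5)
Your proof takes essentially the same route as the paper's: the paper's entire proof is the single sentence ``This is a direct consequence of \cite[Theorem~I]{Ben14b} and the fact that the pairing $\langle\,,\,\rangle_{Np^\infty}$ is symplectic,'' and you are expanding the content of Benois's Theorem~I (itself an adaptation of Nekov\'a\v{r}'s formalism) rather than doing something independent of it. So as a citation-level argument, you are on the same ground as the authors.

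However, your attempt to \emph{unfold} the sign bookkeeping does not close as written. Using your own relations (a) and (b): from (b), $\beta^{\mathrm{cyc}}(x\cup y)=\beta^{\mathrm{cyc}}(x)\cup y - x\cup\beta^{\mathrm{cyc}}(y)$, and from (a) in bidegree $(1,2)$ with the symplectic sign, $x\cup\beta^{\mathrm{cyc}}(y)=-\,\beta^{\mathrm{cyc}}(y)\cup x$. Substituting gives $\beta^{\mathrm{cyc}}(x\cup y)=\beta^{\mathrm{cyc}}(x)\cup y+\beta^{\mathrm{cyc}}(y)\cup x$, i.e.\ the \emph{sum} $\frak{h}^{1,1}(x,y)+\frak{h}^{1,1}(y,x)$, not the difference. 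If one then grants the vanishing of $\beta^{\mathrm{cyc}}(x\cup y)$ as you propose, the na\"ive conclusion is \emph{anti}symmetry, contradicting the statement. The resolution (and the reason the paper simply cites Benois rather than re-deriving) is that there are additional signs coming from the odd shift $[-3]$ in the duality $\mathbf{R}\Gamma\otimes^{\mathbb L}\mathbf{R}\Gamma\to\mathscr{A}[-3]$ and from the cone construction defining $S^\bullet(V_{\f_{/K}},\mathbb D_{\ubeta})$; these are tracked over several sections in Nekov\'a\v{r}'s \emph{Selmer Complexes} (and transported to the $(\varphi,\Gamma_\cyc)$-module setting in \cite{Ben14b}) and conspire to flip the symmetry type exactly once. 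You flag the cochain-level verification as the ``main obstacle'' and defer it to Nekov\'a\v{r}/Benois, which is the honest move; but the two-line display you offer in the interim would mislead a careful reader, and you should either drop it or carry the shift signs through explicitly. You also assert $\beta^{\mathrm{cyc}}(x\cup y)=0$ ``by the same duality/trace-compatibility argument as in Nekov\'a\v{r}'' without any detail — this is a genuine input and deserves at least a precise pointer. None of this affects the validity of the proposition (Benois's Theorem~I, which you invoke, does the work), but it means your sketch is not a self-contained re-derivation of that theorem.
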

\begin{proof} This is a direct consequence of \cite[Theorem~I]{Ben14b} and the fact that the pairing $\langle\,,\,\rangle_{Np^\infty}$ is symplectic.
\end{proof}
The map $\gamma-1\pmod{J_\scrA^2}\mapsto \log \chi_\cyc (\gamma)$ induces an isomorphism 
$$\partial_\cyc: J_\scrA/J_\scrA^2\stackrel{\sim}{\lra} \scrA\,.$$
We define the $\scrA$-valued height pairing $\mathfrak{h}_{\f,K}$ by setting
$$\mathfrak{h}_{\f,K}:=\partial_\cyc\circ \mathfrak{h}_{\f,\ubeta}^{1,1}\,.$$

\subsection{Specializations and comparison with Nekov\'a\v{r}'s heights}
\label{subsec:specializeheights}

Shrinking $U$ if necessary, we shall assume that $2k\notin I$. Throughout this subsection, we fix an integer $\kappa\in I$ with $\kappa\geq k$ and set 
$$g:=\f(\kappa)\in S_{\kappa}(\Gamma_0(Np)) \hbox{\,\,\, and \,\,\,}b=\ubeta(\kappa).$$
The Galois representation $V_{\f}\otimes_{\mathscr{A},\psi_\kappa}E$ is the central-critical twist $V_g$ of Deligne's representation $W_g$ associated to the cuspidal eigenform $g$. 

\begin{lemma}\label{lemma_old_non_theta_critical}
The eigenform $g$ is non-$\theta$-critical and old at $p$.
\end{lemma}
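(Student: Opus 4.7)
My plan is to handle the two conclusions separately, reducing both to the constant slope $k-1$ of the family $\f$ and the standing exclusion $2k\notin I$.

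The case $\kappa=k$ is essentially tautological: here $g=f^\beta$, which is non-$\theta$-critical by our running hypothesis and is by construction a $p$-stabilization of $f\in S_k(\Gamma_0(N))$, hence old at $p$. I may thus assume $\kappa>k$, which by the definition of $I$ means $\kappa\geq k+(p-1)p^{r-1}$; together with the constancy of slope of $\f$ (arranged after shrinking $U$ via \cite[Corollary~B5.7.1]{coleman}), this gives $v_p(b)=k-1<\kappa-1$.

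For the first assertion, the strict inequality $v_p(b)<\kappa-1$ already says that $g$ is of non-critical slope. Since any $\theta$-critical classical eigenform of weight $\kappa$ is necessarily of critical slope $\kappa-1$ (by the characterization recalled in \cite[Proposition~2.11]{bellaichepadicL}), this forces $g$ to be non-$\theta$-critical. For the second assertion I would first appeal to Coleman's classicality theorem (slope $<\kappa-1$ implies classical) to see that $g$ is a classical element of $S_\kappa(\Gamma_0(Np))$. If $g$ were new at level $\Gamma_0(Np)$, then local-global compatibility combined with the fact that $p\|Np$ would force its local representation at $p$ to be an unramified twist of the Steinberg representation, and hence $v_p(b)=(\kappa-2)/2$. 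Combined with $v_p(b)=k-1$, this forces $\kappa=2k$, contradicting our standing assumption $2k\notin I$. Consequently $g$ lies in the old subspace at $p$, i.e.\ it arises as a $p$-stabilization of some eigenform in $S_\kappa(\Gamma_0(N))$.

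I do not anticipate a genuine obstacle: the condition $2k\notin I$ was arranged precisely so as to exclude the Steinberg case, and all remaining ingredients (constancy of slope for the Coleman family, Coleman's classicality criterion, and Bella\"iche's description of $\theta$-critical classical forms) are available in the references already invoked in the paper.
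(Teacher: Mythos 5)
Your proof is correct and takes essentially the same route as the paper: dispose of $\kappa=k$ directly via the running hypothesis, use the constant-slope hypothesis $v_p(b)=k-1<\kappa-1$ to rule out $\theta$-criticality for $\kappa>k$, and observe that a $p$-newform of weight $\kappa$ would have $v_p(b)=\kappa/2-1$, forcing $\kappa=2k$, excluded by the choice of $I$. The extra justifications you offer (Coleman's classicality criterion, local-global compatibility for the Steinberg case) are correct but already implicit in the paper's setup, where $\f(\kappa)$ is stipulated to be a classical cuspform of level $\Gamma_0(Np)$ for $\kappa\in I$.
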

\begin{proof}
If $\kappa>k$, the eigenform for $g$ is not critical since in this case we have $v_p(b)=k-1<\kappa-1$. If $\kappa=k$, then $g=f^\beta$ is non-$\theta$-critical by assumption.

If $g$ were new at $p$, we would have $k-1=v_p(b)=\kappa/2-1$ and thus $\kappa=2k$, contradicting the choice of $I$.
\end{proof}

\begin{corollary}
\label{cor:giscrytalline}
The Galois representation $V_g$ is crystalline at $p$.
\end{corollary}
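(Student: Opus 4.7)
The plan is to leverage the previous lemma (Lemma~\ref{lemma_old_non_theta_critical}), which gives us that $g$ is old at $p$, and then appeal to the standard fact that the $p$-adic Galois representation attached to a newform of level coprime to $p$ is crystalline at $p$.

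More concretely, first I would observe that by Lemma~\ref{lemma_old_non_theta_critical}, $g = \f(\kappa)$ is old at $p$, so there exists a newform $g^\sharp \in S_\kappa(\Gamma_0(N))$ of level $N$ (which is coprime to $p$ by our standing assumption) such that $g$ is one of the two $p$-stabilizations of $g^\sharp$. In particular, the Deligne representation $W_g$ attached to $g$ coincides with $W_{g^\sharp}$, the Galois representation attached to $g^\sharp$, since the $p$-stabilization process does not change the underlying system of Hecke eigenvalues away from $p$ nor the associated Galois representation.

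Next, since $g^\sharp$ has tame level $N$ with $\gcd(N,p)=1$, it is well-known (as a consequence of the compatibility of the local and global Langlands correspondences, or by direct construction from étale cohomology of modular curves with level coprime to $p$) that $W_{g^\sharp}\vert_{G_{\QQ_p}}$ is crystalline. Hence $W_g\vert_{G_{\QQ_p}}$ is crystalline. Since the cyclotomic character is crystalline, the Tate twist $V_g = W_g(\kappa/2)$ is crystalline as well.

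There is essentially no obstacle here; the corollary is an immediate unwinding of the preceding lemma together with the classical crystallinity of Galois representations attached to tame-level newforms. The only thing to be careful about is the half-integer twist when $\kappa$ is odd — but $\kappa \in I \subset \ZZ_{\geq 2}$ with $\kappa \equiv k \pmod{(p-1)p^{r-1}}$ and $k$ even forces $\kappa$ to be even (since $p$ is odd, $p-1$ is even), so $\kappa/2 \in \ZZ$ and the twist is a genuine Tate twist.
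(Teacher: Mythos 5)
Your proof is correct and matches the intended argument: the paper leaves the proof implicit, but Lemma~\ref{lemma_old_non_theta_critical} establishes that $g$ is old at $p$, and the paper then introduces $g_\circ$ (your $g^\sharp$) in Definition~\ref{def:unstabilizedform} as the level-$N$ newform whose $p$-stabilization is $g$; crystallinity of $W_{g_\circ}$ at $p$ is standard, and the integer Tate twist preserves it. Your check that $\kappa$ is even (so the twist is genuine) is a careful touch, though note that the paper also only ever applies the formalism at even $\kappa \equiv k \pmod{(p-1)p^{r-1}}$ precisely for this reason.
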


\begin{defn}
\label{def:unstabilizedform}
We let $g^\circ\in S_\kappa(\Gamma_0(N))$ denote the newform such that $g=(g^\circ)^b$ is the $p$-stabilization of $g^\circ$.
\end{defn}

Consider the Bloch-Kato Selmer group $H^1_{\rm f}(K,V_g)$. It comes equipped with Nekov\'a\v{r}'s $p$-adic height pairing
$$h^{\rm Nek}_{b,K}:\,H^1_{\rm f}(K,V_g) \otimes H^1_{\rm f}(K,V_g) \lra E.$$
The height pairing $h^{\rm Nek}_{b,K}$ is associated to the Hodge-splitting
$${\rm D}_{\rm cris}(V_g)={\rm D}_b\oplus {\rm Fil}^0{\rm D}_{\rm cris}(V_g)$$
together with the symplectic pairing 
$$\langle\,,\,\rangle_{N}\,:\, V_g \otimes V_g \lra E(1)$$
that is induced from the Poincar\'e duality for the \'etale cohomology of the modular curve $X_0(N)$, where $V_g=V_{g^\circ}$ appears as a direct summand.

Our goal in this subsection is to compare these objects to those obtained by specializing the $\scrA$-adic objects we have defined in the previous section. 

\begin{defn}
\label{def:Poincaredualitypairings}
\item[i)] We let $W_{Np}$ denote the Atkin-Lehner operator of level $Np$ and let $\langle\,,\, \rangle_{Np}$ denote the Poincar\'e duality pairing on the cohomology of the modular curve of level $\Gamma_0(N)\cap \Gamma_1(p)$. 
\item[ii)] Realizing $V_g$ as the $g$-isotypical $($with respect to the dual Hecke operators $T_\ell^\prime$ for $\ell \nmid Np$ and operators $U_\ell^\prime$ for $\ell\mid Np$$)$ quotient of the cohomology of the modular curve of level $\Gamma_0(N)\cap \Gamma_1(p)$ with coefficients in ${\rm TSym}^{\kappa-2}(\mathscr{H}_{\ZZ_p})$ on the left and as the $g$-isotypical $($with respect to the Hecke operators $T_\ell$ for $\ell \nmid Np$ and operators $U_\ell$ for $\ell\mid Np$$)$ direct summand of the modular curve of level $\Gamma_0(N)\cap \Gamma_1(p)$ with coefficients in ${\rm Sym}^{\kappa-2}(\mathscr{H}_{\ZZ_p}^{\vee})$ on the right, we define
$$\langle \,,\, \rangle_{Np}^\prime\,:\,V_g \otimes V_g \lra E(1)$$
 by setting 
$$\langle x,y\rangle_{Np}^\prime:=\langle x,W_{Np}^{-1}\,y \rangle_{Np}$$ 
and refer to it as the $p$-stabilized Poincar\'e duality pairing on $V_g$. Here, we follow the notation in \cite[\S2.3]{KLZ2} to define the sheaf $\mathscr{H}_{\ZZ_p}$.
\item[iii)] We let 
$${\rm Pr}_b^*: V_g \stackrel{\sim}{\lra} V_g$$ 
denote the natural isomorphism of Galois representations appearing in \cite[Proposition 10.1.1/1]{KLZ2}\footnote{This map would have been denoted by $({\rm Pr}^b)^*$ in op. cit.}. Here $V_g$ on the left is the $g$-isotypical direct summand in the cohomology of $X_0(N)$ $($with respect to the Hecke operators $T_\ell$ for $\ell \nmid N$ and operators $U_\ell$ for $\ell\mid N$$)$, whereas $V_g$ on the right is the $g$-isotypical direct summand in the cohomology of the modular curve of level $\Gamma_0(N)\cap \Gamma_1(p)$ $($with respect to the Hecke operators $T_\ell$ for $\ell \nmid Np$ and operators $U_\ell$ for $\ell\mid Np$$)$. We also have an isomorphism $({\rm Pr}_b)_*: V_g \stackrel{\sim}{\lra} V_g$ in the reverse direction.
\end{defn}
\begin{remark}
\label{rem_compare_V_g_V_g_star_Atkin}
In this remark, let us write $V_g^*$ for the (self-dual twist of the) $g$-isotypical direct summand of the modular curve of level $\Gamma_0(N)$ with coefficients in ${\rm Sym}^{\kappa-2}(\mathscr{H}_{\QQ_p}^{\vee})$. Also in this remark, we let $V_g$ denote the (self-dual twist of the) $g$-isotypical quotient of the modular curve of level $\Gamma_0(N)$ with coefficients in ${\rm TSym}^{\kappa-2}(\mathscr{H}_{\QQ_p})$. The Atkin--Lehner involution induces an isomorphism 
$$\lambda_N(g^\circ)^{-1}W_N: V_g^*\stackrel{\sim}{\lra} V_g\,.$$
where $\lambda_N(g^\circ)$ is the Atkin-Lehner pseudo-eigenvalue of $g^\circ$. The Heegner classes (denoted by $z_g^\circ\in H^1(K,V_g^*)$ and introduced in \S\ref{subsubsec:HeegnerCycles} below) naturally take coefficients in the representation $V_g^*$.  The identification via this involution will be made implicit by not differentiating $V_g$ and $V_g^*$. We will then write, except in  Theorem~\ref{thm:bigheegnermain} and Corollary~\ref{cor:padicGZoverKThatInterpolates}, $z_g^\circ\in H^1(K,V_g)$ in place of writing $\lambda_N(g^\circ)^{-1}W_N(z_g^\circ)$. This matter will only be crucial in Theorem~\ref{thm:bigheegnermain} and Corollary~\ref{cor:padicGZoverKThatInterpolates}, where the interpolative properties of Generalized Heegner cycles require to keep track of the space where the Galois representation $V_g$ is realized. When we write (by an abuse of notation) $\lambda_N(g)^{-1}W_N: V_g \lra V_g$, we will understand that $V_g$ on the left is a direct summand of $H^{1}_{\textup{\'{e}t}}(Y_1(N)_{\overline{\QQ}},{\rm Sym}^{\kappa-2}({\mathscr{H}_{\QQ_p}^\vee}))(\kappa/2)$, whereas $V_g$ on the right is a direct summand of $H^{1}_{\textup{\'{e}t}}(Y_1(N)_{\overline{\QQ}},{\rm TSym}^{\kappa-2}({\mathscr{H}_{\QQ_p}}))(1-\kappa/2)$.
\end{remark}
See \cite[\S3.1.1]{BLGHCpatching} for an elaboration on the self-duality pairings which make an appearance in Definition~\ref{def:Poincaredualitypairings}; most particularly, for the appearance of the Atkin--Lehner operator. We are grateful to D. Loeffler for bringing the following observation to our attention.

\begin{proposition}
\label{prop:comparisionoftwopoincarepairings}
\item[i)] $\langle \psi_\kappa\, x, \psi_\kappa\, y\rangle_{Np}^\prime=\psi_\kappa\circ \langle x,y\rangle_{Np^\infty}$.
\item[ii)]$\langle X\,,\, W_{Np}{\rm Pr}_b^*\, y\rangle_{Np}^\prime=\langle  ({\rm Pr}_b)_*X\,,\,y\rangle_{N}$.
\item[iii)] We have 
$$\langle{\rm Pr}_b^*\, x\,,\, {\rm Pr}_b^*\, y\rangle_{Np}^\prime= b \,\lambda_N(g^\circ)\, \mathcal{E}(g)  \mathcal{E}^* (g)\, \langle x, y\rangle_{N},$$
where $\mathcal{E}(g)=\left(1-\frac{p^{\kappa-2}}{b^2}\right)$ and $\mathcal{E}^*(g)=\left(1-\frac{p^{\kappa-1}}{b^2}\right)$.
\end{proposition}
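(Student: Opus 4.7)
The plan is to establish (i) as a formal consequence of the construction of the $\mathscr{A}$-adic symplectic pairing, and to reduce (ii) to a direct computation using the explicit description of $\mathrm{Pr}_b^*$ from \cite[Prop.~10.1.1]{KLZ2} combined with the standard identities satisfied by degeneracy maps between modular curves of levels $N$ and $Np$.

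For (i), recall that $V_{\mathbf{f}}$ arises as a Hecke-isotypic direct summand of the inverse limit of the étale cohomology of the tower $\{X_s\}$, twisted by $\Lambda^\dagger_{\mathrm{wt}}$ via the critical character $\Theta$; the pairing $\langle\,,\,\rangle_{Np^\infty}$ is induced from Poincaré duality on each $X_s$, the twist by $\Theta$ being precisely what is needed to obtain an $\mathscr{A}(1)$-valued self-pairing (rather than a pairing with the Hecke-dual summand). The Atkin-Lehner involution $W_{Np^s}$ enters naturally, identifying the $\mathbf{f}$-isotypic and $\overline{\mathbf{f}}$-isotypic components; since Hecke projection, Poincaré duality, and the Atkin-Lehner twist all commute with the specialization map $\psi_\kappa$, (i) follows on unraveling the construction.

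For (ii), recall from \cite[Prop.~10.1.1]{KLZ2} that $\mathrm{Pr}_b^*$ is, up to an Euler-factor normalization, a specific linear combination of the two degeneracy pullbacks $\pi_1^*, \pi_p^*: H^1(X_0(N),-) \to H^1(X_0(Np),-)$ engineered to land in the $U_p=b$-eigenspace of the $g_\circ$-isotypic component, with $a$ denoting the second root of the Hecke polynomial of $g_\circ$ at $p$ (so that $a+b = a_p(g_\circ)$ and $ab = p^{\kappa-1}$). One then expands
\[
\langle \mathrm{Pr}_b^* x,\, \mathrm{Pr}_b^* y\rangle'_{Np}=\langle \mathrm{Pr}_b^* x,\, W_{Np}\,\mathrm{Pr}_b^* y\rangle_{Np}
\]
into four pairings of the form $\langle \pi_i^* x, W_{Np}\pi_j^* y\rangle_{Np}$. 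Three inputs then combine to yield the claimed identity: the compatibility of $W_{Np}$ with the degeneracy maps, which relates $W_{Np}\pi_1^*$ to $\pi_p^* W_N$ up to an appropriate power of $p$, with $W_N$ acting on the $g_\circ$-summand as the scalar $\lambda_N(g_\circ)$; the adjointness $\langle \pi_i^* x,\pi_j^* y\rangle_{Np}=\langle x,\pi_{i,*}\pi_j^* y\rangle_N$ under Poincaré duality; and the explicit action of the composites $\pi_{i,*}\pi_j^*$ on the $g_\circ$-summand (the diagonal ones giving $(p+1)$-type scalars weighted by powers of $p^{\kappa-2}$, while $\pi_{1,*}\pi_p^* = T_p$ gives $a+b$). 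Collecting terms, applying $ab=p^{\kappa-1}$, and cancelling the normalization scalar of $\mathrm{Pr}_b^*$ (which contributes twice, once for each of the two copies), the polynomial in $a,b$ that emerges simplifies to $b\,\lambda_N(g_\circ)\,\mathcal{E}(g)\,\mathcal{E}^*(g)$.

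The main obstacle is to track the various normalization conventions consistently, and in particular to explain the asymmetric appearance of $\mathcal{E}(g)$ versus $\mathcal{E}^*(g)$ as opposed to a symmetric expression such as $\mathcal{E}(g)^{2}$. This asymmetry reflects the role played by the Atkin-Lehner twist $W_{Np}$ inside the $p$-stabilized pairing $\langle\,,\,\rangle'_{Np}$: one copy of $\mathrm{Pr}_b^*$ meets the Euler factor $\mathcal{E}(g)$ directly through $U_p=b$, while the other meets it after being twisted through $W_{Np}$, which on the new part of cohomology exchanges $U_p$ with $p^{\kappa-1}/U_p$ and thus produces $\mathcal{E}^*(g)$ in its place.
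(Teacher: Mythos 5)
Your proposal is correct and follows essentially the same route as the paper: both reduce part (ii), via the definition of the $p$-stabilized pairing and adjointness under Poincar\'e duality, to computing the scalar by which the composite $(\mathrm{Pr}_b)_*\,W_{Np}\,\mathrm{Pr}_b^*$ acts on the $g_\circ$-isotypic summand. The only real difference is that the paper simply quotes the last paragraph of the proof of \cite[Proposition 10.1.1]{KLZ2} for the value of that scalar, whereas you propose to re-derive it by expanding $\mathrm{Pr}_b^*$ into the two degeneracy pullbacks and computing the four cross terms; this is a legitimate unpacking of the citation, and your explanation for why $\mathcal{E}(g)$ and $\mathcal{E}^*(g)$ appear asymmetrically (one factor of $\mathrm{Pr}_b^*$ meets $U_p=b$ directly, the other after conjugation by $W_{Np}$) is the right heuristic, though the normalization bookkeeping you flag is where the work really lies. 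For part (i), the paper likewise just cites \cite[Proposition 4.4.8 and Theorem 4.6.6]{LZ1}, so your appeal to compatibility of the construction with specialization is the same reasoning at a comparable level of detail.
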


\begin{proof}
The first assertion is well-known; c.f. Proposition 4.4.8 and Theorem 4.6.6 of \cite{LZ1}. The second identity follows from definitions; c.f. \cite[Diagram (5)]{BLGHCpatching}. For the third, we note that 
\begin{align*}
\langle{\rm Pr}_b^*\, x\,,\, {\rm Pr}_b^*\, y\rangle_{Np}^\prime&= \langle {\rm Pr}_b^*\,x\,,\, W_{Np}\,{\rm Pr}_b^*\, y\rangle_{Np}\\
&=\langle x\,,\,({\rm Pr}_b)_*\, W_{Np}\,{\rm Pr}_b^*\, y\rangle_{N}\\
&=b \,\lambda_N(g^\circ)\, \mathcal{E}(g)  \mathcal{E}^* (g)\, \langle x, y\rangle_{N}
\end{align*}
where the first and second equalities follow from definitions, whereas the third is a consequence of the discussion in the final paragraph of the proof of Proposition 10.1.1 of \cite{KLZ2}.
\end{proof}
Since the $g$ is non-$\theta$-critical (Lemma~\ref{lemma_old_non_theta_critical}), the triangulation \eqref{eqn:triangulationoverA} gives rise to a saturated triangulation 
$$0\lra \DD_b\lra \Ddagrig(V_g)\lra \widetilde{\DD}_b\lra 0$$
of the $(\varphi,\Gamma_\cyc)$-module $\Ddagrig(V_g)$ by base change, where $\DD_b:=\mathbb D_\ubeta\otimes_{\scrA,\psi_\kappa}E$ and $\widetilde \DD_b:=\widetilde{\mathbb D}_\ubeta\otimes_{\scrA,\psi_\kappa}E$. With this data at hand, one may proceed precisely as in \S\ref{subsubsec:selmercomplexbig} to define a Selmer complex $S^\bullet(V_{g_{/K}},\DD_b)$ in the category of $E$-vector spaces. We let $\mathbf{R}\Gamma(V_{g_{/K}},\DD_b)$ denote the corresponding object in the derived category and $H^i(V_{g_{/K}},\DD_b)$ denote its cohomology. 

The general formalism to construct $p$-adic heights we outlined in \S\ref{subsec:Aadicheights} (where we utilize the symplectic pairing 
$$\langle \,,\, \rangle_{Np}^\prime\,:\, V_g\otimes V_g \lra E(1)$$
given in Definition~\ref{def:Poincaredualitypairings} to determine an isomorphism $V_g^*(1)\stackrel{\sim}{\ra} V_g$) also equips us with an $E$-valued height pairing 
$$\mathfrak{h}_{g,b,K}\,:\,H^1(V_{g_{/K}},\DD_b)\otimes H^1(V_{g_{/K}},\DD_b)\lra E\,.$$

\begin{lemma}
\label{lemma:specializedselmergroupversusthebase}
\item[{\rm{i)}}] We have a natural morphism $($which we shall denote by $\psi_\kappa$, by slight abuse$)$
$$\psi_\kappa:\,H^1(V_{\f_{/K}},\mathbb D_{\ubeta})\otimes_{\scrA,\psi_\kappa}E{\lra} H^1(V_{g_{/K}},\DD_b)$$
of $E$-vector spaces, which is an isomorphism for all but finitely many choices of $g$. 
\item[{\rm{ii)}}] The following diagram commutes:
$$\xymatrix@C=.1cm{H^1(V_{\f_{/K}},\mathbb D_{\ubeta})\ar[d]_{\psi_\kappa}&\otimes_{\scrA}& H^1(V_{\f_{/K}},\mathbb D_{\ubeta})\ar[d]_{\psi_\kappa}\ar[rrrrr]^(.65){\mathfrak{h}_{\f,K}}&&&&&\mathscr{A}\ar[d]_{\psi_\kappa}\\
 H^1(V_{g_{/K}},\DD_b)&\otimes_E& H^1(V_{g_{/K}},\DD_b)\ar[rrrrr]_(.65){\mathfrak{h}_{g,b,K}}&&&&& E
}$$

\end{lemma}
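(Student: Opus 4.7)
The plan is to deduce both parts from the compatibility of every ingredient in the Selmer complex with the derived specialization functor $-\otimes^{\mathbb L}_{\scrA,\psi_\kappa}E$. For part (i), I would first apply this functor termwise to the cone defining $S^\bullet(V_{\f_{/K}},\mathbb D_{\ubeta})$ and compare the result with $S^\bullet(V_{g_{/K}},\DD_b)$. Away from $p$ and at the global term, this amounts to the evident identifications $C^\bullet(G_{K,\Sigma},V_{\f})\otimes^{\mathbb L}_{\scrA,\psi_\kappa}E\simeq C^\bullet(G_{K,\Sigma},V_g)$ and the analogous statement for the two-term unramified complexes, since $V_\f$ and the relevant inertia invariants are flat over $\scrA$ on a dense open locus. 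At primes above $p$, the crucial input is that the triangulation \eqref{eqn:triangulationoverA} remains short exact after specialization and that $\mathbb{D}_{\ubeta}\otimes_{\scrA,\psi_\kappa}E$ embeds as a \emph{saturated} sub-$(\varphi,\Gamma_\cyc)$-module of $\Ddagrig(V_g)$. This is exactly where Lemma~\ref{lemma_old_non_theta_critical} enters: the non-$\theta$-criticality of $g$ (in the sense of Bella\"iche) forces the triangulation to specialize to the canonical one used to define $\DD_b$, and consequently the Fontaine--Herr complex of $\DD_b$ computes the local condition $U_\p^+(V_g,\DD_b)$ faithfully.

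To upgrade the resulting natural morphism to an isomorphism for all but finitely many weights, I would combine the finite generation of the cohomology of $(\varphi,\Gamma_\cyc)$-modules over $\scrA$ (Liu, Kedlaya--Pottharst--Xiao) with the base-change spectral sequence attached to $\otimes^{\mathbb L}_{\scrA,\psi_\kappa}E$. This sequence identifies the kernel and cokernel of the specialization map on $H^1$ with $\mathrm{Tor}_{\scrA}^i$-terms involving $H^0$ and $H^2$ of the Selmer complex; since these are finitely generated modules over the affinoid $\scrA$, the Tor-terms vanish off a finite ``jumping locus'' of weights, yielding the asserted genericity. For part (ii), commutativity of the diagram follows from the functoriality of the Bockstein morphism and the cup product under base change. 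Indeed, the exact sequence $0\to V_{\f}\otimes J_{\scrA}/J_{\scrA}^2\to V_{\f}\otimes \mathscr R_{+,\scrA}/J_{\scrA}^2\to V_{\f}\to 0$ base-changes via $\psi_\kappa$ precisely to the analogous sequence over $E$ inducing the Bockstein for $V_g$, while Proposition~\ref{prop:comparisionoftwopoincarepairings}(i) supplies the compatibility of the symplectic pairings $\langle\,,\,\rangle_{Np^\infty}$ and $\langle\,,\,\rangle_{Np}^\prime$ needed at the level of the cup product. Finally, the isomorphism $\partial_\cyc$ is tautologically compatible with specialization, since on both sides it sends $\gamma-1$ to $\log\chi_\cyc(\gamma)$.

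The main obstacle I anticipate is the local control at $p$ in part (i): one must verify that the triangulation \eqref{eqn:triangulationoverA} is compatible in a strong enough sense with the saturated triangulation of $\Ddagrig(V_g)$ used to define $S^\bullet(V_{g_{/K}},\DD_b)$. The non-$\theta$-critical hypothesis on $g$ is what rules out the pathology in which $\mathbb{D}_{\ubeta}\otimes_{\scrA,\psi_\kappa}E$ becomes non-saturated in $\Ddagrig(V_g)$; were this saturation to fail, the specialized Fontaine--Herr complex would compute the local condition only up to a controlled error term, and the base-change morphism in (i) could fail to be an isomorphism generically. Once saturation is in place, the remainder of the argument is a formal consequence of Benois' framework in~\cite{Ben14b}.
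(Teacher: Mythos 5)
Your approach is essentially the same as the paper's, which also rests on the base-change formalism for Selmer complexes (the paper cites Pottharst directly rather than unpacking the compatibilities as you do). A few remarks on the details.

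The paper's proof of part~(i) boils the argument down to the base-change exact sequence
\[
0\lra H^1(V_{\f_{/K}},\mathbb D_{\ubeta})/\wp_\kappa H^1(V_{\f_{/K}},\mathbb D_{\ubeta})\lra H^1(V_{g_{/K}},\DD_b)\lra  H^2(V_{\f_{/K}},\mathbb D_{\ubeta})[\wp_\kappa],
\]
which is exactly the low-degree portion of the $\mathrm{Tor}$ spectral sequence you invoke (degenerate here because $\scrA$ is a one-dimensional affinoid). Your identification of kernel/cokernel contributions could be tightened slightly: there is no kernel contribution -- the first map above is always injective -- and only the $H^2[\wp_\kappa]$-torsion term obstructs surjectivity; genericity then follows because $H^2$ is a finitely generated $\scrA$-module, so its torsion is supported on finitely many maximal ideals. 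Your emphasis on saturation of $\mathbb D_{\ubeta}\otimes_{\scrA,\psi_\kappa}E$ inside $\Ddagrig(V_g)$ via the non-$\theta$-criticality of $g$ is correct and indeed crucial, but note that the paper establishes this point in the two sentences immediately preceding the lemma rather than inside its proof, so it is part of the ambient setup rather than a new input. For part~(ii), your appeal to functoriality of the Bockstein morphism and the cup product, together with the compatibility of the symplectic pairings supplied by Proposition~\ref{prop:comparisionoftwopoincarepairings}(i), is the content the paper compresses into ``follows easily from definitions.''
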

\begin{proof}
Let $\wp_\kappa:=\ker(\psi_\kappa)$ be the prime of $\mathscr{A}$ corresponding to $g$. Notice then that 
$$H^1(V_{\f_{/K}},\mathbb D_{\ubeta})\otimes_{\scrA,\psi_\kappa}E=H^1(V_{\f_{/K}},\mathbb D_{\ubeta})/\wp_\kappa H^1(V_{\f_{/K}},\mathbb D_{\ubeta})$$ 
and the general base change principles for Selmer complexes (c.f. \cite[Section 1]{Pot13}) show that the sequence
$$0\lra H^1(V_{\f_{/K}},\mathbb D_{\ubeta})/\wp_\kappa H^1(V_{\f_{/K}},\mathbb D_{\ubeta})\lra H^1(V_{g_{/K}},\DD_b)\lra  H^2(V_{\f_{/K}},\mathbb D_{\ubeta})[\wp_\kappa]$$ 
of $E$-vector spaces is exact. The first assertion now follows. The second follows easily from definitions. 
\end{proof}

\begin{proposition}
\label{prop:comparisionofheightswithouttrivialzeros}
There is a natural isomorphism 
$$H^1(V_{g_{/K}},\DD_b) \stackrel{\sim}{\lra} H^1_{\rm f}(K,V_g)\,.$$
Moreover, the height pairing $\mathfrak{h}_{g,b,K}$ coincides with ${\displaystyle {h^{\rm Nek}_{b,K}}\big{/}{b \,\lambda_N(g^\circ)\, \mathcal{E}(g)  \mathcal{E}^* (g)}}$.
\end{proposition}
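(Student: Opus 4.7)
\emph{Step 1 (the natural isomorphism).} The plan is to interpret the Selmer complex $\mathbf{R}\Gamma(V_{g_{/K}},\DD_b)$ as the derived object computing the Bloch--Kato Selmer group. By Corollary~\ref{cor:giscrytalline} the representation $V_g$ is crystalline at $p$, and by Lemma~\ref{lemma_old_non_theta_critical} the form $g$ is non-$\theta$-critical. Specializing the family triangulation \eqref{eqn:triangulationoverA} at $\psi_\kappa$ and applying Berger's equivalence between crystalline $(\varphi,\Gamma_\cyc)$-modules and filtered $\varphi$-modules, the resulting sub-object $\DD_b\subset \Ddagrig(V_g)$ corresponds exactly to the Hodge splitting $\Dcris(V_g)=D_b\oplus \Fil^0\Dcris(V_g)$. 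By the standard dictionary (as in \cite{Ben14b}), the local condition cut out by $\DD_b$ at $\p$ and at $\p^c$ agrees with the Bloch--Kato local condition $H^1_{\textup{f}}(K_\q,V_g)$; note that the non-$\theta$-critical hypothesis is precisely what ensures this splitting is compatible, so that no trivial zero phenomenon contributes to the comparison in $H^1$. At primes $\lambda\in \Sigma\setminus\{\p,\p^c\}$, $V_g$ is unramified and the complex $U^+_\lambda(V_g)$ is by construction a model for the unramified cohomology, which coincides with $H^1_{\textup{f}}(K_\lambda,V_g)$. Assembling these local identifications produces the asserted isomorphism $H^1(V_{g_{/K}},\DD_b)\stackrel{\sim}{\longrightarrow} H^1_{\textup{f}}(K,V_g)$.

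\emph{Step 2 (comparing the two heights).} Both $\frak{h}_{g,b,K}$ and $h^{\textup{Nek}}_{b,K}$ are constructed in the same formal way: they are the composition of the Bockstein map induced by the cyclotomic deformation with the cup product promoted to an $E$-valued pairing via a choice of self-duality $V_g\simeq V_g^*(1)$. The content of \cite[Theorem~I]{Ben14b} is that when fed the \emph{same} Hodge-type splitting and the \emph{same} symplectic pairing, Benois's $(\varphi,\Gamma_\cyc)$-theoretic construction reproduces Nekov\'a\v r's Selmer-complex construction. Thus the comparison reduces to tracking the difference between the two symplectic pairings used: $\frak{h}_{g,b,K}$ is built from $\langle\,,\,\rangle_{Np}^\prime$ (the specialization of the Coleman-family pairing $\langle\,,\,\rangle_{Np^\infty}$ provided by Proposition~\ref{prop:comparisionoftwopoincarepairings}(i)), whereas $h^{\textup{Nek}}_{b,K}$ is built from the level-$N$ Poincar\'e pairing $\langle\,,\,\rangle_N$.

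\emph{Step 3 (the scaling factor).} The final input is Proposition~\ref{prop:comparisionoftwopoincarepairings}(ii), which computes the precise ratio of the two symplectic pairings after identifying the two avatars of $V_g$ via ${\rm Pr}_b^*$: the factor is exactly $b\,\lambda_N(g_\circ)\,\mathcal{E}(g)\,\mathcal{E}^*(g)$. Since the Bockstein plus cup-product construction depends linearly on the symplectic pairing, the two heights differ by the same factor, giving $\frak{h}_{g,b,K}=h^{\textup{Nek}}_{b,K}/\bigl(b\,\lambda_N(g_\circ)\,\mathcal{E}(g)\,\mathcal{E}^*(g)\bigr)$.

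\emph{Expected main obstacle.} The delicate point is Step~2: one must verify that under the isomorphism of Step~1, Benois's Selmer-complex cup product really does compute Nekov\'a\v r's Hodge-theoretic height for the data $(V_g, D_b)$. This is not a formal statement because the $(\varphi,\Gamma_\cyc)$-module formalism and Nekov\'a\v r's Fontaine-theoretic formalism are set up differently; but it is precisely the content of \cite[Theorem~I]{Ben14b}, applied in a setting with no trivial zeros (guaranteed here by the non-$\theta$-critical assumption and the fact that $\varphi$ acts semisimply on $\Dcris(V_g)$ with distinct eigenvalues $p^{-\kappa/2}a$ and $p^{-\kappa/2}b$, neither of which is $1$ nor $p^{-1}$ in the relevant range). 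The remaining steps are essentially formal, reducing to the explicit comparison in Proposition~\ref{prop:comparisionoftwopoincarepairings}.
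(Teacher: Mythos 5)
Your proposal is essentially the same approach as the paper's: invoke Benois's general results relating $(\varphi,\Gamma_\cyc)$-module Selmer groups and heights to Bloch--Kato and Nekov\'a\v r's objects, then use Proposition~\ref{prop:comparisionoftwopoincarepairings} to account for the discrepancy between the two symplectic pairings. That overall plan is correct.

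However, there is a genuine gap. For Benois's comparison theorems to apply (the paper cites \cite[Theorem~III]{Ben14b} for the Selmer-group isomorphism and \cite[Theorem~11]{Ben14b} for the height comparison — note you cite Theorem~I, which is the symmetry result, not the one that relates the $(\varphi,\Gamma)$-module height to Nekov\'a\v r's), one must verify that no trivial zeros occur. Concretely, one must check $\Dcris(V_g)^{\varphi=1}=0$ and $H^0(\widetilde\DD_b)=0$. You assert these in passing ("neither of which is $1$ nor $p^{-1}$ in the relevant range") but do not prove them, and this is precisely the part of the proof that carries actual content. The paper verifies these two conditions by a case split: for $\kappa\neq k$ one uses the constant-slope-$(k-1)$ assumption on $\f$ to rule out the Hecke roots being $\{1,p^{\kappa-1}\}$ (this would force $\kappa-1=v_p(b)=k-1$, a contradiction), while for $\kappa=k$ (i.e.\ $g=f^\beta$) one needs to appeal to the Ramanujan--Petersson bound (Deligne) since the slope argument is vacuous there. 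Without this verification your Step~1 is incomplete — you reduce to Berger's equivalence and a "standard dictionary," but the dictionary only yields the isomorphism $H^1(V_{g_{/K}},\DD_b)\cong H^1_{\rm f}(K,V_g)$ when these eigenvalue conditions hold, and establishing them is the non-formal part of the argument.
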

\begin{proof} The proof of the first assertion reduces to \cite[Theorem~III]{Ben14b} once we verify 
\begin{itemize}
\item[(i)] ${\rm D}_{\rm cris}(V_g)^{\varphi=1}=0$,
\item[(ii)] $H^0(\widetilde{\DD}_b)=0$.
\end{itemize}
Assume first $\kappa\neq k$ (so that $g\neq f^\beta$). Let $g^\circ$ be as in Definition~\ref{def:unstabilizedform}. The roots of the Hecke polynomial for $g^\circ$ at $p$ could not be the pair $\{1,p^{\kappa-1}\}$, as otherwise we would have $\kappa-1=v_p(b)=k-1$. This verifies both conditions in this case.

When $\kappa=k$ and $g=f^\beta$, both conditions follow as a consequence of the Ramanujan-Petersson conjecture for $f$ (as proved by Deligne), according to which the roots of the Hecke polynomial of $f$ at $p$ could not be the pair $\{1,p^{k-1}\}$.

The assertion concerning the comparison of two $p$-adic heights follows from \cite[Theorem~11]{Ben14b} together with Proposition~\ref{prop:comparisionoftwopoincarepairings}(iii). (We find it instructive to compare Benois' result to  \cite[Theorem~11.4.6]{Ne06} in the ordinary case.)
\end{proof}
The following commutative diagrams summarize the discussion in this subsection:
\begin{equation}
\label{eqn:comaprisonofheights}
\xymatrix@C=.1cm{H^1(V_{\f_{/K}},\mathbb D_{\ubeta})\ar[d]_{\psi_\kappa}&\otimes_{\scrA}& H^1(V_{\f_{/K}},\mathbb D_{\ubeta})\ar[d]_{\psi_\kappa}\ar[rrrrr]^(.65){\mathfrak{h}_{\f,K}}&&&&&\mathscr{A}\ar[d]_{\psi_\kappa}\\
 H^1(V_{g_{/K}},\DD_b)&\otimes_E& H^1(V_{g_{/K}},\DD_b)\ar[rrrrr]_(.65){\mathfrak{h}_{g,b,K}}&&&&& E\\
 H^1_{\rm f}(K,V_g)\ar[u]^{\cong}_{{\rm Pr}_b^*}&\otimes_E& H^1_{\rm f}(K,V_g)\ar[u]^{\cong}_{{\rm Pr}_b^*}\ar[rrrrr]_(.65){{h}_{b,K}^{\rm Nek}}&&&&& E \ar[u]_{b \,\lambda_N(g^\circ)\, \mathcal{E}(g)  \mathcal{E}^* (g)}
}\end{equation}

The following diagram is more useful for our purposes. It is obtained by using Proposition~\ref{prop:comparisionoftwopoincarepairings}(ii); see also \cite[Diagram (5)]{BLGHCpatching} for a detailed discussion that concerns the lower half of the diagram \eqref{eqn:comaprisonofheights_2}.
\begin{equation}
\label{eqn:comaprisonofheights_2}
\xymatrix@C=.1cm{H^1(V_{\f_{/K}},\mathbb D_{\ubeta})\ar[d]_{\psi_\kappa}&\otimes_{\scrA}& H^1(V_{\f_{/K}},\mathbb D_{\ubeta})\ar[d]_{\psi_\kappa}\ar[rrrrr]^(.65){\mathfrak{h}_{\f,K}}&&&&&\mathscr{A}\ar[d]_{\psi_\kappa}\\
 H^1(V_{g_{/K}},\DD_b)\ar[d]^{\cong}_{({\rm Pr}_b)_*}&\otimes_E& H^1(V_{g_{/K}},\DD_b)\ar[rrrrr]_(.65){\mathfrak{h}_{g,b,K}}&&&&& E\\
 H^1_{\rm f}(K,V_g)&\otimes_E& H^1_{\rm f}(K,V_g)\ar[u]^{\cong}_{W_{Np}\circ\,{\rm Pr}_b^*}\ar[rrrrr]_(.65){{h}_{b,K}^{\rm Nek}}&&&&& E\ar@{=}[u]
}\end{equation}
\subsection{Universal Heegner points}
\label{subsec:BigHeegstatementsconsequences}
In this subsection, we shall introduce elements in the Selmer groups on which we shall calculate the $\mathscr{A}$-adic height $\mathfrak{h}_{\f,K}$.
\subsubsection{Heegner cycles}
\label{subsubsec:HeegnerCycles}
 We recall the definition of Heegner cycles on Kuga-Sato varieties, following the discussion in \cite{NekovarGZ}. Recall that we have fixed an imaginary quadratic field $K$ such that all primes dividing the tame level $Np$ split completely in $K/\QQ$. Let $g\in S_\kappa(\Gamma_0(N))$ be a cuspidal eigenform of weight $\kappa> 2$.

Let $Y(N)$ denote the modular curve over $\QQ$ which is the moduli of elliptic curves with full level $N$ structure and we let $\mathfrak{j}:\, Y(N) \ra X(N)$ denote its non-singular compactification. Since we assume $N \geq 3$, there is a universal generalized elliptic curve $E\ra X(N)$ that restricts to the universal elliptic curve
$f :\, E\ra Y (N)$. The $(\kappa- 2)$-fold fibre product of $E$ with itself over $Y(N)$ has a canonical non-singular compactification $W$ described in detail in \cite{Deligne1971Bourbaki, Scholl1990Inventiones}. We have natural maps
\begin{equation}
\label{eqn:KugaSatotoVg}
H^{\kappa-1}_{\textup{\'{e}t}}(W\times_{\QQ}\overline{\QQ},\QQ_p)(\kappa/2)\ra H^{1}_{\textup{\'{e}t}}(X(N)\times_{\QQ}\overline{\QQ},\mathfrak{j}_*{\rm Sym}^{\kappa-2}({\mathscr{H}_{\QQ_p}^\vee}))(\kappa/2)\ra V_g\,.
\end{equation}
where $\mathscr{H}_{\QQ_p}:=(R^1f_*\QQ_p)^\vee=R^1f_*\QQ_p(1)$. Scholl defines a projector $\varepsilon$ (where his $w$ corresponds to our $\kappa-2$) and proves that there is a canonical isomorphism
$$H^{1}_{\textup{\'{e}t}}(X(N)\times_{\QQ}\overline{\QQ},\mathfrak{j}_*{\rm Sym}^{\kappa-2}({\mathscr{H}_{\QQ_p}^\vee}))\stackrel{\sim}{\lra}\varepsilon H^{\kappa-1}_{\textup{\'{e}t}}(W\times_{\QQ}\overline{\QQ},\QQ_p)\,.$$ 
We finally define 
$$\mathscr{B}:=\left\{\left(\begin{array}{cc}* & *\\ 0& * \end{array} \right) \right\}\Big{/}\{\pm 1\} \subset \GL_2(\ZZ/N\ZZ)\big{/}\{\pm 1\} $$
and the idempotent $\varepsilon_{\mathscr B}:=\frac{1}{|\mathscr{B}|}\sum_{g\in \mathscr{B}}g$ (which acts on the modular curves $Y(N)$ and $X(N)$). 
\begin{defn}
We let $\mathfrak{N}$ be an ideal of $\cO$ such that $\cO/\mathfrak{N}\cong\ZZ/N\ZZ$. For an arbitrarily chosen ideal $\mathcal{A}\subset \cO$, consider the isogeny $\mathbb{C}/\mathcal{A}\ra \mathbb{C}/\mathcal{A}{\mathfrak N}^{-1}$. It represents the Heegner point $y=y_{\mathcal A}$ on $Y_0(N)(\mathbb{C})$. It is defined over the Hilbert class field $H$ of $K$.  
\end{defn}
Following \cite{NekovarGZ}, we choose a point $\widetilde{y}\in Y(N)\times_{\QQ}H$ defined over $H$, lying above the Heegner point $y$ (viewed as a closed point of $Y_0(N)\times_\QQ H$). The fiber $E_{\widetilde{y}}$ is a CM elliptic curve defined over $H$ whose endomorphism ring is isomorphic to $\cO$. We let 
$$\Gamma_{\sqrt{{-D_K}}}\subset E_{\widetilde{y}}\times E_{\widetilde{y}}$$ 
denote the graph of $\sqrt{{-D_K}}\in \cO$ (fix any one of the two square-roots)
\begin{defn}
We let 
$$Y:=\underbrace{\Gamma_{\sqrt{{-D_K}}}\times \cdots \times\Gamma_{\sqrt{{-D_K}}}}_{\kappa/2-1 \hbox{ times}}\subset \underbrace{E_{\widetilde{y}}\times\cdots\times E_{\widetilde{y}}}_{\kappa-2 \hbox{ times}}=(W\times_\QQ H)_{\widetilde y}$$
and call the cycle (with rational coefficients) that is represented by $\varepsilon_{\mathscr B}\varepsilon Y$
inside of $\varepsilon_{\mathscr B}\varepsilon {\rm CH}^{\kappa/2}(W\times_\QQ H)_0\otimes \QQ$ (which we also denote by the same symbol $\varepsilon_{\mathscr B}\varepsilon Y$) the Heegner cycle. 
\end{defn}

The cohomology class of $\varepsilon Y$ in $H^{\kappa}_{\textup{\'{e}t}}(W\times_{\QQ}\overline{\QQ},\QQ_p)(\kappa/2)$ vanishes, so that one may apply the Abel-Jacobi map
$${\rm AJ}:{\rm CH}^{\kappa/2}(W\times_{\QQ}H)_0\otimes \QQ\lra H^1(H,H^{\kappa-1}_{\textup{\'{e}t}}(W\times_{\QQ}\overline{\QQ},\QQ_p)(\kappa/2))$$
on the Heegner cycle $\varepsilon_{\mathscr B}\varepsilon Y$. 
\begin{defn}
We let ${\rm AJ}_g:\,{\rm CH}^{\kappa/2}(W\times_{\QQ}H)_0\otimes \QQ \ra H^1(H,V_g)$
denote the compositum of the map \eqref{eqn:KugaSatotoVg} with the Abel-Jacobi map and define the Heegner cycle
$$z_g:={\rm cor}_{H/K}\left({\rm AJ}_g(\varepsilon_{\mathscr B}\varepsilon Y)\right) \in H^1(K,V_g)\,.$$
\end{defn}
\begin{remark}
\label{rem_atkin_lehner_heegner_def}
In view of Remark~\ref{rem_compare_V_g_V_g_star_Atkin}, it is more appropriate to write the map \eqref{eqn:KugaSatotoVg} as the composite
$$
H^{\kappa-1}_{\textup{\'{e}t}}(W\times_{\QQ}\overline{\QQ},\QQ_p)(\kappa/2)\lra H^{1}_{\textup{\'{e}t}}(X(N)\times_{\QQ}\overline{\QQ},\mathfrak{j}_*{\rm Sym}^{\kappa-2}({\mathscr{H}_{\QQ_p}^\vee}))(\kappa/2)\lra V_g^*
$$ 
and $z_g\in H^1(K,V_g^*)$ when we need to be mindful about the space where the Galois representation $V_g$ is realized (which is the case only in Theorem~\ref{thm:bigheegnermain} and Corollary~\ref{cor:padicGZoverKThatInterpolates} below). We retain the notation of Remark~\ref{rem_compare_V_g_V_g_star_Atkin} and write $V_g$ for the $g$-isotypical quotient of $H^{1}_{\textup{\'{e}t}}(X(N)\times_{\QQ}\overline{\QQ},\mathfrak{j}_*{\rm TSym}^{\kappa-2}({\mathscr{H}_{\QQ_p}}))(1-\kappa/2)$. The Atkin--Lehner involution gives rise to an identification 
$V_g^*\stackrel{\sim}{\lra}V_g$ and the Heegner class $\lambda_N(g)^{-1}W_N(z_g)\in H^1(K,V_g)$. We will often not keep track of this involution and write $z_g$ in place of $\lambda_N(g)^{-1}W_N(z_g)$.
\end{remark}
Since $p\nmid N$, all $X(N)$, $X_0(N)$ and $W$ have good reduction at $p$ and it follows from \cite[Theorem 3.1(i)]{Nekovar2000AJHeightsBanff1998} that 
$$z_g\in H^1_{\rm f}(K,V_g)\,.$$
\subsubsection{Heegner cycles in Coleman families}
\label{subsubsec:HeegcycleforColemanfamilies}
For a classical weight $\kappa\in I$ and $\psi_\kappa$ as in \S\ref{subsubsec:colemanfamilyintro},  we let $\f(\kappa)^\circ\in S_{\kappa}(\Gamma_0(N))$ denote the newform whose $p$-stabilization (with respect to $\ubeta(\kappa)$) is the eigenform $\f(\kappa)$. 

The following result (construction of a big Heegner point along the Coleman family $\f$) is \cite[Proposition 4.15]{BLGHCpatching} and \cite[Theorem 5.4.1]{JLZHC}.

\begin{theorem}[B\"uy\"ukboduk--Lei, Jetchev--Loeffler--Zerbes]
\label{thm:bigheegnermain}
There exists a unique class $\mathscr{Z}_{\f}\in H^1(V_{\f_{/K}},\mathbb D_{\ubeta})$ 
that is characterized by the requirements that for any $\kappa\in I$ we have 
\begin{equation}
\label{eqn_bigheegnermain_1}\psi_\kappa\left(\mathscr{Z}_{\f}\right)={\dfrac{\left(1-\dfrac{p^{\frac{\kappa}{2}-1}}{\ubeta(\kappa)}\right)^{2}}{u_K(2\sqrt{-D_K})^{\frac{\kappa}{2}-1}}\frac{W_{Np}\circ({\rm Pr}_{\ubeta(\kappa)})^*({z_{\f(\kappa)^\circ}})}{\ubeta(\kappa)\lambda_N(\f(\kappa)^\circ)\mathcal{E}(\f(\kappa))\mathcal{E}^*(\f(\kappa))} }\in H^1_{\rm f}(K,V_{\f(\kappa)})
\end{equation}
\begin{equation}
\label{eqn_bigheegnermain_2}
({\rm Pr}_{\ubeta(\kappa)})_*\circ\psi_\kappa\left(\mathscr{Z}_{\f}\right)=\dfrac{\left(1-\dfrac{p^{\frac{\kappa}{2}-1}}{\ubeta(\kappa)}\right)^{2}}{u_K(2\sqrt{-D_K})^{\frac{\kappa}{2}-1}}\frac{W_Nz_{\f(\kappa)^\circ}}{\lambda_N(\f(\kappa)^\circ)} \in H^1_{\rm f}(K,V_{\f(\kappa)}),
\end{equation}
where $u_K=|\cO_K^\times|/2$ and $-D_K$ is the discriminant of $K$.
\end{theorem}
We note that the $G_\QQ$-representation $V_{\f(\kappa)}$ that appears on the first line is realized as the $\f(\kappa)$-isotypical quotient of the cohomology of the modular curve of level $\Gamma_0(N)\cap \Gamma_1(p)$ with coefficients in ${\rm TSym}^{\kappa-2}(\mathscr{H}_{\ZZ_p})$, whereas the one that appears on the second line is realized as the $\f(\kappa)^\circ$-isotypical direct summand in the cohomology of the modular curve of level $\Gamma_0(N)$ with coefficients in ${\rm TSym}^{\kappa-2}(\mathscr{H}_{\ZZ_p})$. We also remark that the classical Heegner cycle $z_{\f(\kappa)^\circ}$ descends from the $\f(\kappa)^\circ$-isotypical direct summand in the cohomology of the modular curve of level $\Gamma_0(N)$ with coefficients in ${\rm Sym}^{\kappa-2}(\mathscr{H}_{\ZZ_p}^\vee)$. The Atkin--Lehner involution $\lambda_N(\f(\kappa)^\circ)^{-1}W_N$ which  appears above sends this space to the $\f(\kappa)^\circ$-isotypical direct summand in the cohomology with coefficients in ${\rm TSym}^{\kappa-2}(\mathscr{H}_{\ZZ_p})$. See \cite[\S3.1]{BLGHCpatching} for a detailed elaboration on this rather delicate issue.

\begin{remark}
\label{remark_GHC_proofs}
Jetchev--Loeffler--Zerbes in \cite{JLZHC} rely on the overconvergent \'etale cohomology of Andreatta--Iovita--Stevens. The construction of ``universal'' Heegner cycles in \cite{BLGHCpatching} exploits the $p$-adic construction of rational points, a theme first observed by Rubin~\cite{Rubin92}, and dwells on the formula of Bertolini--Darmon--Prasanna which relates the Bloch--Kato logarithms of {these cycles} to appropriate Rankin--Selberg $p$-adic $L$-values. In~\cite{BPSII}, we will give another construction of ``universal'' Heegner cycles in the context of Emerton's completed cohomology (on realizing the family $\f$ on Emerton's eigensurface).
\end{remark}

\section{$p$-adic $L$-functions over the imaginary quadratic field $K$}
\label{sec:padicLfunctions}

We introduce the needed $p$-adic $L$-functions for the arguments in this paper.  We first discuss a Rankin-Selberg $p$-adic $L$-function defined over our imaginary quadratic field $K$.  We then compare this $p$-adic $L$-function to a na\"ive product of $p$-adic $L$-functions defined over $\QQ$.

\subsection{Ranking-Selberg $p$-adic $L$-functions}

Loeffler and Zerbes in \cite{LZ1} have constructed $p$-adic $L$-functions (in $3$-variables) associated to families of semi-ordinary Rankin-Selberg products $f_1\otimes f_2$ of eigenforms, where $f_1$ runs through a Coleman family and $f_2$ through a $p$-ordinary family.  (See also \cite{LoefflerpadicRankinSelberg} where the correct interpolation property is extended from all crystalline points to all critical points.)
We shall let $f_2$ vary in a (suitable branch of the) universal CM family associated to $K$ (which we shall recall below), and thus we may reinterpret this $p$-adic $L$-function as a $p$-adic $L$-function associated to the base change of $\f$ to $K$.

Given a Dirichlet character $\eta$ of $p$-power order and conductor, we shall treat it as a character of $\Gamma$ by class field theory (normalized via geometric Frobenius elements). The base change $\eta\circ\mathbb{N}_{K/\QQ}$ is a ray class character of $p$-power order and conductor. Let us denote by ${\rm res}_{K}$ the natural injection $G_K\to G_\QQ$. Then the $p$-adic Galois character associated to the ray class character $\eta\circ\mathbb{N}_{K/\QQ}$ of $p$-power order and conductor is the character $\eta\circ \res_{K}=\eta_{\vert_{G_K}}$.

\subsubsection{CM Hida families}
For a general modulus $\mathfrak{n}$ of $K$, let $K(\mathfrak{n})$ denote the maximal $p$-extension contained in the ray class field modulo $\mathfrak{n}$. We set $H_{\mathfrak{n}}^{(p)}:=\Gal(K(\mathfrak{n})/K)$. In particular, $K(\p^\infty):=\cup K(\p^n)$ is the unique $\ZZ_p$-extension of $K$ which is unramified outside $\p$. We let $\Gamma_\p:=\varprojlim_n H_{\mathfrak{p}^n}^{(p)}$ denote its Galois group over $K$. We fix an arbitrary Hecke character $\psi_0$  of $\infty$-type $(-1,0)$, conductor $\p$ and whose associated $p$-adic Galois character factors through $\Gamma_\p$. Notice then that ${\psi}_0\equiv\mathds{1}_{\p} \mod \m_E$, where we have let $\mathds{1}_{\p}:\left(\cO/\p\right)^\times\ra \cO_E^\times$ denote the trivial character modulo $\p$. We also let $\Gamma_K$ denote the Galois group of the $\Zp^2$-extension of $K$.
\begin{remark}
If the class number of $K$ is prime to $p$, then the Hecke character $\psi_0$ is unique, as the ratio of two such characters would have finite $p$-power order and conductor dividing $\p$. 
\end{remark}

The theta-series 
$$\Theta(\psi_0):=\sum_{(\mathfrak{a}, \p)=1}\psi_0(\mathfrak{a})q^{\NN\mathfrak{a}}\in S_2(\Gamma_1(|D_K| p),\epsilon_K\omega^{-1})$$
  is  a newform and it is the weight two specialization (with trivial wild character) of the CM Hida family $\g$ with tame level $D_K$ and character $\epsilon_K\omega$. The weight one specialization of this CM Hida family with trivial wild character equals the $p$-ordinary theta-series $\Theta^{\ord}(\mathds{1}_\p):=\sum_{(\mathfrak{a},\p)=1}q^{\NN\mathfrak{a}}\in S_{1}(\Gamma_1(D_K p),\epsilon_K)$ of $\mathds{1}_\p$.
\begin{remark}
One may construct the Hida family $\g$ as follows. We let ${\bf{T}}_{|D_K|p}$ denote the Hecke algebra given as in \cite[\S4.1]{LLZ2} and we define the maximal ideal $\mathcal{I}_{\mathfrak{p}}\subset {\bf{T}}_{|D_K|p}$ as in \cite[Definition 5.1.1]{LLZ2}. Note that in order to determine the map $\phi_{\p}$ that appears in this definition, we use the algebraic Hecke character $\psi_0$ we have chosen above. 

It follows by \cite[Prop. 5.1.2]{LLZ2} that $\mathcal{I}_{\mathfrak{p}}$ is non-Eisenstein, $p$-ordinary and $p$-distinguished. By \cite[Theorem~4.3.4]{LLZ2}, the ideal $\mathcal{I}_{\mathfrak{p}}$ corresponds uniquely to a $p$-distinguished maximal ideal $\mathcal{I}$ of the universal ordinary Hecke algebra ${\bf{T}}_{|D_K|p^\infty}$ acting on $H^1_{\ord}(Y_1(|D_K|p^\infty))$ (definitions of these objects may be found in \cite[Definition 4.3.1]{LLZ2}). The said correspondence is induced from Ohta's control theorem~\cite[Theorem 1.5.7(iii)]{ohta99}, which also attaches to $\mathcal{I}_{\p}$ a unique non-Eisenstein, $p$-ordinary and $p$-distinguished maximal ideal $\mathcal{I}_{\p^r}$ of ${\bf{T}}_{|D_K|p^r}$ for each $r\geq 1$ (it is easy to see that it is the kernel of the compositum of the arrows  
$${\bf{T}}_{|D_K|p^r}\stackrel{\phi_{\p^r}}{\lra}\cO_L[H_{\p^r}]\lra\cO_E\lra \cO_E/\varpi_E,$$ and therefore with its original form given in \cite[Definition 5.1.1]{LLZ2}).   The ideal $\mathcal{I}$ determines the CM Hida family $\g$ alluded to above.   
\end{remark}

Let us denote by $\mathscr{W}(\Gamma_\p)$ the rigid analytic space over $\Qp$ parametrizing continuous characters of $\Gamma_\p$, so that whenever $A$ is an affinoid $\Qp$-algebra, we have $\mathscr{W}(\Gamma_\p)(A)={\rm Hom}_{\rm cts}(\Gamma_\p,A^\times)$. We may and will think of $\mathscr{W}(\Gamma_\p)(\mathbb{C}_p)$ as the open unit ball.

We shall henceforth identify $\mathscr{W}(\Gamma_\p)$ with a finite flat cover of the  weight space $\mathscr{W}(\Gamma)$ for the Hida family $\g$, whose $\mathbb{C}_p$-points are given by
$$\mathscr{W}(\Gamma)(\mathbb{C}_p)={\rm Hom}_{\rm cts}(\Gamma,\mathbb{C}_p^\times)\xrightarrow[\langle\chi_\cyc\rangle]{\sim} {\rm Hom}_{\rm cts}(1+p\Zp,\mathbb{C}_p^\times)$$
(c.f. \cite{ghatevatsal2004AIF}, p. 2157--2158). We let $\widetilde{\kappa}\in \mathscr{W}(\Gamma_\p)(\mathbb{\QQ}_p)$ denote the point corresponding to the weight one specialization $\Theta^{\rm ord}(\mathds{1}_\wp)$. Given a Dirichlet character of order $p^r$ and $p$-power conductor (so that its $p$-adic Galois character, which we have also denoted by $\eta$, factors through $\Gamma$), we denote by $\eta^{(\p)}\in {\rm Hom}_{\rm cts}(\Gamma_\p,\mathbb{C}_p^\times)$ any lift of the Galois character $\eta$. Observe then that the following diagram commutes for each such choice:
$$\xymatrix@C=0.1cm{\Gamma_K\ar[rd]_{\eta_{\vert_{G_K}}}\ar@{->>}[rr]&& \Gamma_\p\ar[ld]^{\eta^{(\p)}}\\
&\mathbb{C}_p^\times&
}$$
Then we have $\g(\eta^{(\p)})\in S_1(\Gamma_1(D_Kp^{r+1}),\epsilon_K\eta)$ for the specialization of the Hida family at the point corresponding to $\eta^{(\p)}$.

\subsubsection{The $p$-adic $L$-function and interpolation property}
We fix a neighborhood $\mathscr{Y}=\overline{B}(\widetilde{\kappa},p^{-\alpha})\subset {\rm Spm}\,\mathscr{W}(\Gamma_\p)$ about $\widetilde{\kappa}$ of radius $p^{-\alpha}$ for some positive rational number $\alpha$ (recall our identification of $\mathscr{W}(\Gamma_\p)$ with the rigid analytic unit ball). The neighborhood $\mathscr{Y}$ has a structure of an affinoid space over $\QQ_p$ and we denote by $\cO(\mathscr Y)$ the ring of analytic functions on $\mathscr Y$ (which is isomorphic to a Tate algebra). Let
$$L_p^{\RS}(\f,\g\vert_{\mathscr Y})\in \mathscr{O(Y)}\,\widehat{\otimes}\,\mathscr{H}_{\mathscr{A}} $$ 
denote the $3$-variable Rankin-Selberg $p$-adic $L$-function of Loeffler and Zerbes \cite{LZ1,LoefflerpadicRankinSelberg}. Since $\g$ is a $p$-ordinary family, we may choose $\mathscr{Y}$ as large as we like, e.g. (thanks to Rohrlich's generic non-vanishing results) to contain a point
$$\eta^{(\p)}\in \mathscr{W}(\Gamma_\p)(\mathbb{C}_p)$$
such that $L(f_{/K},\eta^{-1}\circ\mathbb{N}_{K/\QQ},\frac{k}{2})\neq 0$. In fact, on passing to limit in $\mathscr{Y}$, we obtain a $p$-adic $L$-function
$$L_p^{\RS}(\f,\g)\in \mathscr{H}(\Gamma_\p)\,\widehat{\otimes}\,\mathscr{H}_{\mathscr{A}}$$
where the completed tensor product of two Fr\'echet spaces is given as in \cite[\S17]{SchneiderBookNFA} (see especially the paragraph following Prop. 17.6 in op. cit.). It follows from the discussion in \cite[\S3]{BuyukbodukLeiFunctionalEquation} that the $p$-adic $L$-function may be thought as a relative $p$-adic $L$-function for $\f$ over $K$, interpolating the algebraic parts of the $L$-values $L(\f(\kappa)_{/K},\Psi,1)$ where $\kappa\in I$ and $\Psi$ runs through the algebraic Hecke characters of $K$ with infinity type $(a,b)$ with $1-\frac{\kappa}{2}\leq a\leq b \leq \frac{\kappa}{2}-1$.

\begin{defn}
We let  $\mathscr{D}_{\f_{/K}}^{\RS}\in \mathscr{H_A}$ denote the $p$-adic distribution obtained by specializing $L_p^{\RS}(\f,\g)$ to the point $\widetilde{\kappa} \in \mathscr{W}(\Gamma_\p)(\mathbb{Q}_p)$ in the weight space for $\g$, corresponding to the weight one specialization $\Theta^{\rm ord}(\mathds{1}_\wp)$.
\end{defn}
The following interpolation property characterizes the distribution $\mathscr{D}_{\f_{/K}}^{\RS}$. 
\begin{theorem}[Loeffler]
\label{THMloefflersinterpolation}
For every $\kappa\in I$, any $j \in \ZZ\cap [1,\kappa-1]$ and all Dirichlet characters $\eta$ of conductor $p^r$ $($we allow $r=0$$)$ we have
\begin{align*}(\psi_\kappa\otimes\eta\chi_\cyc^{j-1})(\mathscr{D}_{\f_{/K}}^{\RS})=(-1)^{j-1}&\times\frac{\mathcal{V}(\f(\kappa)^\circ,\eta,j)^2\,p^{2r(j-1)}\tau(\eta)^2}{\ubeta(\kappa)^{2r}\mathcal{E}(\f(\kappa))\mathcal{E}^*(\f(\kappa))}\\
&\times\frac{i^{\kappa-1}\Gamma(j)^2}{2^{2j+\kappa-1}\pi^{{2j}}}\times\frac{L(\f(\kappa)^\circ_{/K}, \eta^{-1}\circ\NN_{K/\QQ},j)}{\langle \f(\kappa)^\circ,\f(\kappa)^\circ\rangle_N}
\end{align*}
where $\displaystyle{\mathcal{V}(\underline{\bf{f}}(\kappa)^\circ,\eta,j)=\left(1-{p^{j-1}\eta(p)}\big{/}{\ubeta(\kappa)}\right)\left(1-{p^{\kappa-1-j}\eta^{-1}(p)}\big{/}{\ubeta(\kappa)}\right)}$, $\tau(\eta)$ is the Gauss sum normalized to have norm $p^{\frac{r}{2}}$ and finally,
$$\mathcal{E}(\f(\kappa)):=\left(1-\frac{p^{\kappa-2}}{\ubeta(\kappa)^2}\right)\,\,\,\,,\,\,\,\,\mathcal{E}^*(\f(\kappa)):=\left(1-\frac{p^{\kappa-1}}{\ubeta(\kappa)^2}\right)\,.$$
\end{theorem}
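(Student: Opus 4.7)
The plan is to deduce the formula by specialising the three-variable Rankin--Selberg $p$-adic $L$-function $L_p^{\RS}(\f,\g)$ of Loeffler--Zerbes to the weight-one point $\widetilde{\kappa}$ of the CM Hida family $\g$, and then to rewrite the resulting Rankin--Selberg convolution over $\QQ$ as a Hecke $L$-function over $K$ via Shimura's factorisation. Since $\mathscr{D}_{\f_{/K}}^{\RS}$ is defined to be this specialisation, the content of the theorem is that the Loeffler--Zerbes interpolation formula, when evaluated at $\lambda=1$ and $\g(\lambda)=\Theta^{\rm ord}(\mathds{1}_\p)$, assumes exactly the shape claimed.

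First, I would record the classical crystalline interpolation formula from \cite{LZ1} for $L_p^{\RS}(\f,\g)$ at an arithmetic point $(\kappa,\lambda,j,\eta)$ in the range of interpolation: it computes the algebraic part of $L(\f(\kappa)^\circ\otimes\g(\lambda)^\circ\otimes\eta^{-1},j)/\langle\f(\kappa)^\circ,\f(\kappa)^\circ\rangle_N$ up to explicit archimedean $\Gamma$-factors, powers of $2\pi i$ and of the tame level, the root number of $\eta$, the Mazur--Kitagawa-type factor $\mathcal{V}(\f(\kappa)^\circ,\eta,j)^2$ from Theorem~\ref{thmbellaichemain} encoding the two copies of $\f$ in the Rankin--Selberg integral, and four modified Euler factors at $p$: the pair $\mathcal{E}(\f(\kappa))\mathcal{E}^*(\f(\kappa))$ in the denominator correcting $\f(\kappa)^\circ\rightsquigarrow\f(\kappa)$, and an analogous pair correcting $\g(\lambda)^\circ\rightsquigarrow\g(\lambda)$.

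Next, I would set $\lambda=1$ and $\g(\lambda)=\Theta^{\rm ord}(\mathds{1}_\p)$. Shimura's product formula
$$L\bigl(\f(\kappa)^\circ\otimes\Theta(\mathds{1}_\p)\otimes\eta^{-1},\,s\bigr)\;=\;L\bigl(\f(\kappa)^\circ_{/K},\,\eta^{-1}\!\circ\!\NN_{K/\QQ},\,s\bigr)$$
converts the $\QQ$-Rankin--Selberg $L$-value into precisely the Hecke $L$-value appearing on the right-hand side of the theorem, and replaces the product of local Rankin--Selberg root numbers by $W(\eta\circ\NN_{K/\QQ})$. The two Euler corrections attached to $\g$ become trivial at the weight-one CM point because the relevant Hecke $L$-factor at $\p$ is absent, while the two attached to $\f(\kappa)$ survive and produce $\mathcal{E}(\f(\kappa))\mathcal{E}^*(\f(\kappa))$ in the denominator. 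The wild $p^r$-part of $\eta$ contributes $\ubeta(\kappa)^{-2r}\,p^{2r(j-\tfrac12)}$ from the Frobenius eigenvalue of the two copies of $\f$, and the archimedean block $i^{\kappa-1}N^{2j-\kappa+1}\Gamma(j)^2/2^{2j+\kappa-1}\pi^{2j}$ together with the sign $(-1)^{j-1}$ is the standard algebraic-part normalisation for the Hecke $L$-function $L(\f(\kappa)^\circ_{/K},\,\cdot,\,j)$ measured against the Petersson inner product $\langle\f(\kappa)^\circ,\f(\kappa)^\circ\rangle_N$.

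The main obstacle is that $\widetilde{\kappa}$ lies at weight one, i.e.\ on the boundary of the range of classical crystalline interpolation in \cite{LZ1}; the theorem of op.\ cit.\ directly applies only for $\lambda\geq 2$. One therefore needs the extension of the interpolation formula to all critical classical points that is established in \cite{LoefflerpadicRankinSelberg}, of which the weight-one CM specialisation $\Theta^{\rm ord}(\mathds{1}_\p)$ is a distinguished instance. Granting that extension, the substitution $\lambda=1$ in the interpolation formula combined with Shimura's factorisation yields the asserted identity verbatim.
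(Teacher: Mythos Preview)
Your approach is essentially the one the paper indicates: the authors do not give a self-contained proof but simply remark that the stated interpolation property is ``a slight alteration of Loeffler's original formulation in \cite{LoefflerpadicRankinSelberg}'' and ``can be obtained following the calculations carried out in \cite{BLForumMathCorrigendum} based on Loeffler's work in op.\ cit.''  Your outline---specialise the three-variable $L_p^{\RS}(\f,\g)$ of \cite{LZ1,LoefflerpadicRankinSelberg} to the weight-one CM point $\widetilde{\kappa}$ and invoke Shimura's factorisation to pass from the Rankin--Selberg convolution to the base-change $L$-function over $K$---is exactly the computation those references carry out, and your identification of \cite{LoefflerpadicRankinSelberg} as supplying the extension beyond the strictly crystalline range is on point.

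One caution on a detail: your assertion that ``the two Euler corrections attached to $\g$ become trivial at the weight-one CM point because the relevant Hecke $L$-factor at $\p$ is absent'' is too quick.  At $\widetilde{\kappa}$ the $p$-stabilised theta-series $\Theta^{\rm ord}(\mathds{1}_\p)$ has $U_p$-eigenvalue $1$ and the underlying newform $\Theta(\mathds{1})$ has Hecke polynomial $(X-1)^2$ at $p$ (since $p$ splits in $K$); the $\g$-side Euler factors therefore do not simply evaluate to $1$, and one must track them through the calculation rather than discard them.  In the references cited, these factors are absorbed into (and partially cancel against) other pieces of the interpolation formula to produce exactly the $\mathcal{V}$-factor squared and the $\mathcal{E},\mathcal{E}^*$ terms you see in the statement, but this bookkeeping is the actual content of the ``calculations'' in \cite{BLForumMathCorrigendum} and deserves more care than ``trivial''.
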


\begin{remark}
The $3$-variable Rankin--Selberg $p$-adic $L$-function $L_p^{\RS}(\f,\g\vert_{\mathscr Y})$ is the one given as in~\cite[Theorem 6.1]{LoefflerpadicRankinSelberg} and denoted by $L_p^{\rm geom}(\mathcal{F}_1,\mathcal{F}_2)$ in op.\ cit., with the choices $(\mathcal{F}_1,U_1)=(\f,U)$ and $(\mathcal{F}_2,\widetilde{U}_2)=(\g_{\vert_{\mathscr{Y}}},\mathscr{Y})$. We note that this $p$-adic $L$-function is only meromorphic a priori, but as explained in the Footnote 2 in \cite[\S6]{LoefflerpadicRankinSelberg}, it is analytic since the nebentype character of $\f$ is trivial and that of $\g$ equals $\epsilon_K$ (in particular, their product is non-trivial), by assumption.

We note that the $p$-adic $L$-function constructed in \cite{LoefflerpadicRankinSelberg} interpolates the values of the \emph{imprimitive} Rankin--Selberg $L$-series. Our running assumption is that the tame levels of the two families $\f$ and $\g$ are coprime. Thanks to this condition, the imprimitive Rankin--Selberg $L$-series associated to the crystalline specializations of $\f\otimes\g$ coincide with that of the motivic $L$-functions, c.f. \cite[Remark 2.2]{LoefflerpadicRankinSelberg}.

We briefly review the the construction $L_p^{\RS}(\f,\g\vert_{\mathscr Y})$ for the convenience of the reader. To construct this $p$-adic $L$-function with the desired interpolation properties, Loeffler in \cite[\S4]{LoefflerpadicRankinSelberg} first constructs two families of auxiliary $p$-adic $L$-functions $L_p^{\spadesuit}(\f,\g\vert_{\mathscr Y},\tau)$ and $L_p^{\diamondsuit}(\f,\g\vert_{\mathscr Y},\tau)$ for each choice of algebraic $\tau\in \mathscr{W}(\Gamma_\cyc):={\rm Spm}\, \Zp[[\Gamma_\cyc]]\otimes\Qp$ of non-negative weight (so that over some open subgroup of $\Gamma_\cyc$, we have $\tau=\chi_\cyc^{w(\tau)}$ for some non-negative integer $w(\tau)$), over the two-dimensional rigid analytic subspaces 
\begin{align*}
    \mathscr{W}^{\spadesuit}(\tau)&\,:=\{(\kappa,\widetilde{\lambda},\kappa-1-\tau)\in U\times \mathscr{Y}\times \mathscr{W}(\Gamma_\cyc)\} \\
    \mathscr{W}^{\diamondsuit}(\tau)&\,:=\{(\kappa,\widetilde{\lambda},\lambda+\tau)\in U\times \mathscr{Y}\times \mathscr{W}(\Gamma_\cyc)\}
\end{align*}
of $U\times\mathscr{Y}\times  \mathscr{W}(\Gamma_\cyc)$ (where for $\widetilde{\lambda}\in \mathscr{Y}$, we denote by $\lambda \in \mathscr{W}(\Gamma_\cyc)$ the point that lies below $\widetilde{\lambda}$). The definition of $L_p^{?}(\f,\g\vert_{\mathscr Y},\tau)$ (?=$\spadesuit,\diamondsuit$) follows Hida's original strategy, as enhanced via Urban's overconvergent projector $\Pi^{\rm oc}$ (which interpolates Shimura's holomorphic projector in an appropriate sense), c.f. \cite[\S3.3.4]{urban}. In more precise terms, Loeffler puts
\begin{align*}
    L_p^{\spadesuit}(\f,\g\vert_{\mathscr Y},\tau)&\,:=N^{-\kappa+\widetilde{\lambda}+2\tau}\lambda_{\f}\left[\Pi^{\rm oc}\left(\g_{\vert_{\mathscr Y}}\cdot \theta^{\tau}\left(E^{[p]}_{\kappa-\widetilde{\lambda}-2\tau}\right)\right)\right] \\
    L_p^{\diamondsuit}(\f,\g\vert_{\mathscr Y},\tau)&\,:=N^{\kappa-\widetilde{\lambda}-2\tau-2}\lambda_{\f}\left[\Pi^{\rm oc}\left(\g_{\vert_{\mathscr Y}}\cdot \theta^{\tau}\left(F^{[p]}_{\kappa-\widetilde{\lambda}-2\tau}\right)\right)\right]
\end{align*}
where: 
\begin{itemize}
    \item $\lambda_{\f}: S_{\kappa}^\dagger(\Gamma_0(N),U)\to \cO(U)$ is the $ \cO(U)$-linear functional that factors through the Hecke eigenspace associated to $\f$ and that sends $\f$ to $1$. Note the conjugate Coleman family $\f^c$ (given as in \cite{LoefflerpadicRankinSelberg}, Lemma 3.4) coincides with $\f$, since the tame nebentype of the family $\f$ is trivial by assumption. 
    \item $\theta^\tau$ is the twisting operator on the families of overconvergent modular forms (c.f. Lemma 3.3 in op. cit. and the paragraph that follows its proof).
    \item $E^{[p]}$ and $F^{[p]}$ are the two families of $p$-depleted  overconvergent modular forms, given as in Lemma 3.2 in op. cit. Note in particular that we enlarge our coefficent field $E$ so that it contains $N$th roots of unity. 
\end{itemize}
By construction, for each $?=\spadesuit,\diamondsuit$ and $\tau$ as before, the $p$-adic $L$-function  $L_p^{?}(\f,\g\vert_{\mathscr Y},\tau)$ verifies the required interpolation formula at the classical points of $\mathscr{W}^{?}(\tau)$ with $\kappa>\lambda$. The subspaces $\mathscr{W}^{\spadesuit}(\tau)$ and $\mathscr{W}^{\diamondsuit}(\tau)$ are small enough to ensure that the formal power series $\g_{\vert_{\mathscr Y}}\cdot \theta^{\tau}\left(E^{[p]}_{\kappa-\widetilde{\lambda}-2\tau}\right)$ and $\g_{\vert_{\mathscr Y}}\cdot \theta^{\tau}\left(F^{[p]}_{\kappa-\widetilde{\lambda}-2\tau}\right)$ become nearly-overconvergent over these spaces (so that we can apply Urban's overconvergent projector $\Pi^{\rm oc}$ on them). Moreover, $L_p^{\spadesuit}(\f,\g\vert_{\mathscr Y},\tau)$ and $L_p^{\diamondsuit}(\f,\g\vert_{\mathscr Y},\tau')$ coincide as functions on $\mathscr{W}^{\spadesuit}(\tau)\cap \mathscr{W}^{\diamondsuit}(\tau')$. 

Finally, one would like to patch $L_p^{?}(\f,\g\vert_{\mathscr Y},\tau)$ as $?=\spadesuit, \diamondsuit$ and $\tau$ vary. To achieve this, the first step is to enlarge $\mathscr{Y}$ (which one can, using the fact that the family $\g$ is ordinary) so as to ensure that the union $\bigcup_{t\in \mathbb{N}}\mathscr{W}^{\spadesuit}(t)\cap \mathscr{W}^{\diamondsuit}(\tau)$ is Zariski-dense in $\mathscr{W}^{\diamond}(\tau)$ (c.f. \cite{LoefflerpadicRankinSelberg}, Lemma 5.3). Using this observation and the existence\footnote{Loeffler relies on the earlier work of Loeffler and Zerbes in \cite[\S9.1]{LZ1} at this point. Alternatively, one may of course resort to the results of Andreatta--Iovita and Urban in \cite{urban} and \cite[Appendix B]{AndreattaIovita3Var} if one desires a treatment solely using $p$-adic analytic methods.} of a $3$-variable $p$-adic $L$-function associated to $\f\otimes\g_{\vert_{\mathscr{Y}}}$ that verifies the sought-after interpolation property at a subset of \emph{the crystalline points} of $U\times \mathscr{Y} \times \mathscr{W}(\Gamma_\cyc)$, Loeffler shows in \cite[Corollary~6.2]{LoefflerpadicRankinSelberg} that these families of $p$-adic $L$-functions do patch to an analytic function over the entirety of $U\times \mathscr{Y} \times \mathscr{W}(\Gamma_\cyc)$.
\end{remark}

\subsection{Na\"ive $p$-adic $L$-functions over $K$}

We now consider a na\"ive version of a $p$-adic $L$-function over $K$ by taking the product of two $p$-adic $L$-functions over $\QQ$.  We begin by recalling two-variable $p$-adic $L$-functions over the eigencurve.  This construction is due to Glenn Stevens, but first appeared in the literature in \cite{bellaichepadicL}.

Suppose that  $\underline{\bf{h}}$ is a Coleman family over a sufficiently small affinoid disc ${\rm Sp}(\mathscr{A})$ about a non-$\theta$-critical point $g$ of weight $k_0$ on the eigencurve $($in the sense of Definition 2.12 in \cite{bellaichepadicL}$)$ with $U_p$-eigenvalue $\bbalpha$.   Let $I$ denote the set of classical weights of forms occurring in $\mathscr{A}$.

\begin{theorem}
\label{thmbellaichemain}
There exists a $p$-adic distribution $\mathscr{D}^{\Be}_{\underline{\bf{h}}} \in \mathscr{H_A}$ which verifies the following interpolation property:\ For every $\kappa\in I$, any $j \in \ZZ\cap [1,\kappa-1]$ and all Dirichlet characters $\eta$ of conductor $p^r \geq 1$,
\begin{align*}
(\psi_\kappa\otimes\eta\chi_\cyc^{j-1})(\mathscr{D}^{\Be}_{\underline{\bf{h}}})=(-1)^j\Gamma(j)\mathcal{V}(\underline{\bf{h}}(\kappa)^\circ,\eta,j)\tau(\eta)\frac{p^{(j-1)r}}{\bbalpha(\kappa)^r}\frac{L(\underline{\bf{h}}(\kappa)^\circ,\eta^{-1},j)}{(2\pi i)^{j-1}\Omega_{\underline{\bf{h}}(\kappa)}^{\pm}}C_{\underline{\bf{h}}(\kappa)}^\pm
\end{align*}
where, 
\begin{itemize}
\item $\tau(\eta)$ is the Gauss sum $($normalized to have norm $p^{r/2}$$)$, 
\item  $\displaystyle{\mathcal{V}(\underline{\bf{h}}(\kappa)^\circ,\eta,j)=\left(1-{p^{j-1}\eta(p)}\big{/}{\bbalpha(\kappa)}\right)\left(1-{p^{\kappa-1-j}\eta^{-1}(p)}\big{/}{\bbalpha(\kappa)}\right)}$,
\item $\Omega_{\underline{\bf{h}}(\kappa)}^{+}$ and $\Omega_{\underline{\bf{h}}(\kappa)}^{-}$ are canonical periods in the sense of \cite[\S1.3]{Vatsal_Periods_1999},
\item $C_{\underline{\bf{h}}(\kappa)}^+$ and $C_{\underline{\bf{h}}(\kappa)}^-$ are 
non-zero constants that only depend on $\kappa$ and $C_{\underline{\bf{h}}(k_0)}^+ =C_{\underline{\bf{h}}(k_0)}^-=1$,
\item the sign $\pm$ is determined so as to ensure that $(-1)^{(j-1)}\eta(-1)=\pm1$.
\end{itemize}
Moreover, such a distribution is unique up to multiplication by a non-vanishing locally analytic function of $\kappa$, which accounts for the ambiguity in the choices of the $p$-adic periods $C_{\underline{\bf{h}}(\kappa)}^\pm$. 
\end{theorem}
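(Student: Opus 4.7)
The plan is to construct $\mathscr{D}^{\Be}_{\underline{\bf{h}}}$ via Stevens's machinery of overconvergent modular symbols in families, following the approach recorded in \cite{bellaichepadicL}. First I would work in the space of overconvergent modular symbols of tame level equal to that of $\underline{\bf{h}}$, with coefficients in the module of $\mathscr{A}$-valued locally analytic distributions on $\ZZ_p$, equipped with its natural weight-$\underline{\bf{h}}$ action. Since $g$ is non-$\theta$-critical at weight $k_0$, Bellaïche's control/lifting theorem \cite[Theorem~2.16]{bellaichepadicL} guarantees that the classical $U_p$-eigensystem attached to $\underline{\bf{h}}(k_0)$ lifts uniquely (up to scalar and to the $\pm$-sign) to a pair of $\mathscr{A}$-adic eigensymbols $\Phi^\pm_{\underline{\bf{h}}}$. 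Normalizing these lifts so that their specializations at $k_0$ recover chosen classical overconvergent symbols for $g$ forces $C^\pm_{\underline{\bf{h}}(k_0)}=1$.

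Next I would define $\mathscr{D}^{\Be}_{\underline{\bf{h}}}$ as the Amice transform of the restriction to $\ZZ_p^\times$ of the $\mathscr{A}$-valued distribution
\[
\mu^\pm_{\underline{\bf{h}}} := \Phi^\pm_{\underline{\bf{h}}}(\{0\}-\{\infty\})
\]
on $\ZZ_p$, noting that $\mu^\pm_{\underline{\bf{h}}}$ is of bounded slope (dictated by $\bbalpha$), so that its Amice transform lands in $\mathscr{H}_\mathscr{A}$. The sign $\pm$ is selected according to the parity $(-1)^{j-1}\eta(-1)$, which matches the decomposition of modular symbols into eigenspaces for the involution induced by complex conjugation.

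For the interpolation property at a classical $\kappa \in I$, the specialization $\psi_\kappa(\Phi^\pm_{\underline{\bf{h}}})$ is an overconvergent $U_p$-eigensymbol of slope $v_p(\bbalpha(\kappa))<\kappa-1$ (shrinking $\mathrm{Sp}(\mathscr{A})$ if needed so that all classical $\underline{\bf{h}}(\kappa)$ with $\kappa\neq k_0$ are of non-critical slope and, \emph{a fortiori}, non-$\theta$-critical). Stevens's classicality theorem then identifies $\psi_\kappa(\Phi^\pm_{\underline{\bf{h}}})$ with a scalar multiple $C^\pm_{\underline{\bf{h}}(\kappa)}$ of the canonical classical plus/minus modular symbol of $\underline{\bf{h}}(\kappa)^\circ$ attached to the periods $\Omega^\pm_{\underline{\bf{h}}(\kappa)}$. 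Evaluating $\psi_\kappa(\mu^\pm_{\underline{\bf{h}}})$ against the locally analytic character $x\mapsto \eta(x)x^{j-1}$ on $\ZZ_p^\times$ thereby reduces to the classical Amice--Vélu/Vi\v{s}ik computation of $L$-values of $\underline{\bf{h}}(\kappa)^\circ$ twisted by Dirichlet characters of $p$-power conductor (compare \cite[Section~6]{PS}); this yields the Euler-like factor $\mathcal{V}(\underline{\bf{h}}(\kappa)^\circ,\eta,j)$, the Gauss sum $\tau(\eta)$, the gamma factor $\Gamma(j)$, the sign $(-1)^j$, the powers of $p$ and $\bbalpha(\kappa)$, and the period $\Omega^\pm_{\underline{\bf{h}}(\kappa)}$ in the asserted formula.

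Finally, uniqueness of $\mathscr{D}^{\Be}_{\underline{\bf{h}}}$ follows from the density of the characters $\eta\chi_\cyc^{j-1}$ (as $(\kappa,\eta,j)$ varies over the interpolation set) in the weight-character space of $\ZZ_p^\times$, combined with the boundedness of the slope of $\mathscr{D}^{\Be}_{\underline{\bf{h}}}$ across $\mathrm{Sp}(\mathscr{A})$. The main technical obstacle concentrates at the central point $g$ itself, where $\underline{\bf{h}}(k_0)$ may be critical slope: ensuring that the specialized eigensymbol $\psi_{k_0}(\Phi^\pm_{\underline{\bf{h}}})$ is non-zero and genuinely spans the one-dimensional generalized $U_p$-eigenspace requires precisely the non-$\theta$-critical hypothesis via Bellaïche's refinement of Stevens's control theorem, which guarantees that the generalized eigenspace in overconvergent cohomology at $g$ coincides with the (one-dimensional) classical eigenspace.
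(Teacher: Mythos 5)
The paper's own ``proof'' is simply a citation to Bella\"iche's work (\cite[Theorem~3 and (4)]{bellaichepadicL}), and your sketch is a faithful reconstruction of exactly the argument Bella\"iche uses there: Stevens's overconvergent modular symbols over the affinoid $\mathscr{A}$, the uniqueness/lifting of the $U_p$-eigensymbol through the non-$\theta$-critical point via Bella\"iche's refined control theorem, the Amice--V\'elu/Vi\v{s}ik interpolation computation at classical specializations, and the density-plus-bounded-slope argument for uniqueness of the two-variable element of $\mathscr{H}_{\mathscr{A}}$. So your proposal takes essentially the same approach that the paper delegates to its reference.
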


\begin{proof}
See \cite[Theorem 3 and (4)]{bellaichepadicL}.
\end{proof}

\begin{defn}
\label{defKrelativepadicLfunc}

For the Coleman family $\f$ we have fixed above, mimicking Kobayashi \cite{KobayashiGZ,KobayashiGZ2}, we define $\mathscr{D}^{\naive}_{\f_{/K}}$ as the convolution of the $A$-valued distributions $\mathscr{D}_{\f_{/\QQ}}^{\Be}$ and  $\mathscr{D}_{\f_{/\QQ}^K}^{\Be}$:
$$\mathscr{D}^{\naive}_{\f_{/K}}:=\mathscr{D}_{\f_{/\QQ}}^{\Be}\ast \mathscr{D}_{\f_{/\QQ}^K}^{\Be}\,.$$ 
Here, $\f^K$ is the family obtained by twisting the Coleman family $\f$ by the quadratic character $\epsilon_K$. We call $\mathscr{D}^{\naive}_{\f_{/K}}$ the \emph{na\"{i}ve base change $p$-adic $L$-function}. 
\end{defn}

The na\"ive base change $p$-adic $L$-function is then characterized by the following interpolation property: 

For every $\kappa\in I$, any $j \in \ZZ\cap [1,\kappa-1]$ with $j\equiv k/2$ mod $2$, and all \emph{even} Dirichlet characters $\eta$ of conductor $p^r$ (we allow $r=0$),
\begin{align}
\label{EQNnaivepadicLinterpolation}
\begin{aligned}
(\psi_\kappa\otimes\eta\chi_\cyc^{j-1})(\mathscr{D}^{\naive}_{\f_{/K}})=\Gamma(j)^2\,\mathcal{V}(\f(\kappa)^\circ,\eta,j)^2\,&\tau(\eta)^2\,\frac{p^{2r(j-1)}}{\ubeta(\kappa)^{2r}}\times\frac{C_{\f(\kappa)}^\varepsilon C_{\f(\kappa)^K}^\varepsilon}{\Omega_{\f(\kappa)}^{\varepsilon}\Omega_{\f(\kappa)^K}^{\varepsilon}}\\
&\times\frac{L({\f(\kappa)^\circ_{/K},\eta^{-1}\circ\mathbb{N}_{K/\QQ}},j)}{(2\pi i)^{2(j-1)}}
\end{aligned}
\end{align}
where $\varepsilon\in \{\pm\}$ is the sign of $(-1)^{k/2-1}$.


\subsection{A factorization formula}

Let $\eta$ be a Dirichlet character of order $p^r$ (where $r$ is a natural number) and $p$-power conductor. Let $\omega$ denote the Teichm\"uller character and in what follows, denote by $s$ a parameter that varies over $\ZZ_p$.

\begin{defn}
\label{DEF_TwoVarAmiceTransforms}
Fix a Dirichlet character $\eta$ of order $p^r$ and $p$-power conductor as above. 
\item[i)] Given a locally analytic distribution $\mathscr{D}$ on $\Gamma_\cyc\xrightarrow[\chi_\cyc]{\sim}\ZZ_p^\times$, we put 
$$L_p(\mathscr{D},s+\eta):=\langle\chi_\cyc\rangle^{s-1}\omega^{k/2-1}\eta\left(\mathscr{D}\right)$$
for $s\in \ZZ_p$. When $\eta=\mathds{1}$ is the trivial character, we write $s$ in place of $s+\mathds{1}$.  In particular, we shall write $L_p(\mathscr{D},s)$ in place of $L_p(\mathscr{D},s+\mathds{1})$. 
\item[ii)] We define the analytic functions $L_p^{\RS}(\f,\kappa,s+\eta)$ and $L_p^{\naive}(\f_{/K},\kappa,s+\eta)$ on $U\times {\ZZ_p}$ by setting 
\begin{align*}
    L_p^{\RS}(\f_{/K},\kappa,s+\eta)&:=i^{1-k}(-1)^{\frac{k}{2}-1}\frac{\eta(N)^2\langle N\rangle^{2s-\kappa}D_K^{\frac{1}{2}}}{\ubeta(\kappa)\,\lambda_N(\f(\kappa)^\circ)}L_p({\mathscr{D}}_{\f_{/K}}^{\RS},s+\eta)\Big{\vert}_{w=(1+p)^{\kappa-k}-1}\\
    &=-i\frac{\eta(N)^2\langle N\rangle^{2s-\kappa} D_K^{\frac{1}{2}}}{\ubeta(\kappa)\,\lambda_N(\f(\kappa)^\circ)} \times L_p({\mathscr{D}}_{\f_{/K}}^{\RS},s+\eta)\big{\vert}_{w=(1+p)^{\kappa-k}-1}\\
    L_p^{\naive}(\f_{/K},\kappa,s+\eta)&:=L_p({\mathscr{D}}_{\f_{/K}}^{\naive},s+\eta)\big{\vert}_{w=(1+p)^{\kappa-k}-1}\,.
\end{align*}
\item[iii)] For each choice of $\kappa\in I$, we set
$${L_{p,\ubeta(\kappa)}^{\rm Kob}(\f(\kappa)^\circ_{/K},s+\eta):=\ubeta(\kappa)\,\lambda_N(\f(\kappa)^\circ)\mathcal{E}(\f(\kappa))\, \mathcal{E}^*(\f(\kappa))\,L_p^{\RS}(\f_{/K},\kappa,s+\eta)}$$ 
In the particular case when $\kappa=k$, we shall write $L_{p,\beta}^{\rm Kob}(f_{/K},s+\eta)$ in place of $L_{p,\ubeta(k)}^{\rm Kob}(\f(k)^\circ_{/K},s+\eta)$.
\end{defn}

See Remark~\ref{remark_GeoVsNek} below for a comparison of $L_{p,\ubeta(\kappa)}^{\rm Kob}(\f(\kappa)^\circ_{/K},s+\eta)$ to Nekov\'a\v{r}'s $p$-adic $L$-function in the $p$-ordinary set-up.
\begin{remark}
\label{REM_KobayashiNekDoesNotInterpolate}
Observe that the $p$-adic multipliers $\mathcal{E}(\f(\kappa))\,\mathcal{E}^*(\f(\kappa))$ do not vary continuously, and the $p$-adic $L$-functions $L_{p,\ubeta(\kappa)}^{\rm Kob}(\f(\kappa)^\circ_{/K},s)$ do not interpolate as $\kappa\in I$ varies. 
\end{remark}

Let us consider the function 
$${R}_{\f/K}(\kappa,\eta,s):=L_p^{\RS}(\f_{/K},s+\eta)\big{/}L_p^{\naive}(\f_{/K},\kappa,s+\eta).$$ 
We shall write ${R}_{\f/K}(\kappa,s)$ in place of ${R}_{\f/K}(\kappa,\mathds{1},s)$.

\begin{lemma}
\label{Lemma_rinterpolatesperiods}
 Let $\eta$ be a character of $\Gamma_\cyc$ with conductor $p^r$ and $p$-power order, and let $\langle\,,\,\rangle_{N,p}$ denote the Petersson inner product at level $\Gamma_0(Np)$. 
 \item[i)] The function $r(\kappa,\eta):={R}_{\f/K}(\kappa,\eta,\frac{k}{2})$ $($in the variable $\kappa$$)$  specializes to 
\begin{align}
\label{EQNpstabilizedpetersson}
 \frac{\eta(N)^2\langle N \rangle^{k-\kappa} D_K^{\frac{1}{2}}}{{8 \pi^2} \langle \f(\kappa),\f(\kappa)\rangle_{N,p}}\times 2^{2-\kappa}(-1)^{\frac{k}{2}-1}\frac{\Omega_{\f(\kappa)}^\varepsilon\,\Omega_{\f(\kappa)^K}^\varepsilon}{C_{\f(\kappa)}^\varepsilon \,C_{\f(\kappa)^K}^\varepsilon}
\end{align}
whenever $k\neq \kappa \in I$ or $\kappa=k$ but $L(f_{/K},\eta^{-1},\frac{k}{2})\neq 0$.
\item[ii)] For every Dirichlet character $\eta$ of $p$-power order and conductor we have
\begin{equation}
    \label{eqn_invariance_under_cyclotomic_translation}
r(\kappa,\eta)= \eta(N)^2 \, r(\kappa,\mathds{1})\,.
\end{equation}
\item[iii)] If $U$ is sufficiently small,  $r(\kappa):=r(\kappa+\mathds{1})$ is analytic on $U$.
\end{lemma}

\begin{proof}
To prove (i), we compare the interpolation formula for the $p$-adic $L$-function $L_p^{\RS}(\f_{/K},\kappa,\sigma)$  at $\sigma=\frac{k}{2}+\eta$ and $\kappa\in I$, which tells us that 
\begin{align}
   \label{eqn_Lemma433_23_09_2021_1}
   \begin{aligned}
     L_p^{\RS}(\f_{/K},\kappa,\frac{k}{2}+\eta)=&\,\Gamma({k}/{2})^2\,\mathcal{V}(\f(\kappa)^\circ,\eta,{k}/{2})^2 \,\tau(\eta)^2\, \frac{p^{2r(\frac{k}{2}-1)}}{\ubeta(\kappa)^{2r}} \\
     &\times \frac{ \eta(N)^2\langle N\rangle^{k-\kappa}D_K^{\frac{1}{2}}2^{1-\kappa}}{\ubeta(\kappa)\lambda_N(\f(\kappa)^{\circ})\mathcal{E}(\f(\kappa))\mathcal{E}^*(\f(\kappa))} \times\frac{L(\f(\kappa)^\circ_{/K},\eta^{-1},\frac{k}{2})}{(2\pi)^{k}\langle  \f(\kappa)^\circ,\f(\kappa)^\circ\rangle_N}
   \end{aligned}
\end{align}
and that of $L_p^{\naive}(\f_{/K},\kappa,s)$, according which we have 
\begin{align}
   \label{eqn_Lemma433_23_09_2021_2}
   \begin{aligned}
   L_p^{\naive}(\f_{/K},\kappa,\frac{k}{2}+\eta)&=\Gamma({k}/{2})^2\,\mathcal{V}(\f(\kappa)^\circ,\eta,{k}/{2})^2\times \tau(\eta)^2\,\frac{p^{2r(\frac{k}{2}-1)}}{\ubeta(\kappa)^{2r}}\\
   &\qquad\times \frac{C_{\f(\kappa)}^\varepsilon C_{\f(\kappa)^K}^\varepsilon}{\Omega_{\f(\kappa)}^{\varepsilon}\Omega_{\f(\kappa)^K}^{\varepsilon}}\times\frac{L(\f(\kappa)^\circ_{/K},\eta^{-1},\frac{k}{2})}{(2\pi i)^{k-2}}.
   \end{aligned}
\end{align}
The formulae \eqref{eqn_Lemma433_23_09_2021_1} and \eqref{eqn_Lemma433_23_09_2021_2} combined with the well-known comparison of Petersson inner products (c.f. \cite{LoefflerpadicRankinSelberg}, Page 619)
\begin{equation}
\label{eqn_relate_Petersson_products}
    \langle \f(\kappa),\f(\kappa)\rangle_{N,p}=\ubeta(\kappa)\lambda_N(\f(\kappa)^{\circ})\mathcal{E}(\f(\kappa))\,\mathcal{E}^*(\f(\kappa))\,\langle \f(\kappa)^\circ,\f(\kappa)^\circ\rangle_N
\end{equation}
 implies the equality in \eqref{EQNpstabilizedpetersson}.
 
We now prove (ii). Let $\eta$ be any Dirichlet character as in the statement of our lemma and let us put 
$$D_\eta(\kappa):=r(\kappa,\eta)-\eta(N)^2 \,r(\kappa).$$ 
Since both $r(\kappa,\eta)$ and $r(\kappa)$ are meromorphic functions of $\kappa$, so is $D_\eta(\kappa)$. Moreover  It follows from part (i) that 
we have 
$$D_\eta(\kappa)=0,\qquad \forall \kappa\in I, \kappa\neq k\,.$$
Since the set $I\setminus\{k\}$ is dense in $U$, this implies that the meromorphic function $D_\eta$ is identically zero and the proof of the identity \eqref{eqn_invariance_under_cyclotomic_translation} follows.
 
 We now prove our final assertion, namely that $r(\kappa)\in \cO(U)$ if the neighborhood $U$ of $k$ is small enough. By part (ii), it suffices to check that $r(\kappa,\eta)\in \cO(U)$ for a Dirichlet character $\eta$ with $L(f_{/K},\eta^{-1},\frac{k}{2})\neq 0$. For such $\eta$, Equation \eqref{EQNpstabilizedpetersson} tells us that 
 $$r(k,\eta)= \frac{ \eta(N)^2 D_K^{\frac{1}{2}}}{8\pi^2 \langle f^\beta,f^\beta\rangle_{N,p}}\times 2^{2-k}(-1)^{\frac{k}{2}-1}\frac{\Omega_{f^\beta}^\varepsilon\,\Omega_{(f^\beta)^K}^\varepsilon}{C_{f^\beta}^\varepsilon \,C_{(f^\beta)^K}^\varepsilon}\,.$$
 In particular, $r(\kappa,\eta)=\frac{a(\kappa)}{b(\kappa)}$ (where $a,b\in \cO(U)$ are coprime) is regular at $\kappa=k$ (i.e., $b(k)\neq 0$). On shrinking $U$ appropriately, we can ensure that $b$ does not vanish on $U$ and for such $U$, we have $r(\kappa,\eta)\in \cO(U)$. This completes the proof of (iii). 
\end{proof}

Until the end, we assume that $U$ is small enough to guarantee the validity of the conclusions of Lemma~\ref{Lemma_rinterpolatesperiods}.

\begin{lemma}
\label{lemma:Omegadependsonlyonweight}
Let $\eta$ be a Dirichlet character of conductor $p^r$ and $p$-power order. For sufficiently small $U$, we have 
$${R}_{\f/K}(\kappa,\eta,s)=\langle N \rangle^{2s-k} \,r(\kappa,\eta)$$
as meromorphic functions on $U\times \ZZ_p$.
\end{lemma}

\begin{proof}
The equation \eqref{EQNnaivepadicLinterpolation} applied taking $j\in [1,\kappa-1]$ with $j\equiv \frac{k}{2} \mod (p-1)$ shows that we have for all $k\neq\kappa\in I$
\begin{align}
    \label{eqn_thmbellaichemain_etais12jequivk}
    \begin{aligned}
L_p^{\naive}(\f_{/K},\kappa,j+\eta)&\,=\Gamma(j)^2\,\mathcal{V}(\f(\kappa)^\circ,\eta,j)^2\times \tau(\eta)^2\,\frac{p^{2r(j-1)}}{\ubeta(\kappa)^{2r}}\\
&\qquad\qquad\times\frac{C_{\f(\kappa)}^\varepsilon C_{\f(\kappa)^K}^\varepsilon}{\Omega_{\f(\kappa)}^{\varepsilon}\Omega_{\f(\kappa)^K}^{\varepsilon}}\times\frac{L(\f(\kappa)^\circ_{/K},\eta^{-1},j)}{(2\pi i)^{2(j-1)}}
\end{aligned}
\end{align}
whereas by Theorem~\ref{THMloefflersinterpolation} combined with Definition~\ref{DEF_TwoVarAmiceTransforms} and \eqref{eqn_relate_Petersson_products}, we have 
\begin{align}
\label{eqn_THMloefflersinterpolationin_etais12jequivk}
\begin{aligned}
  L_p^{\RS}(\f_{/K},\kappa,j+\eta)=
    &\,\Gamma(j)^2\mathcal{V}(\f(\kappa)^\circ,\eta,j)^2\,\tau(\eta)^2 \frac{p^{2r(j-1)}}{\ubeta(\kappa)^{2r}} \\
    &\qquad\times\eta(N)^2\langle N\rangle^{2j-\kappa}\times \frac{2^{1-\kappa} L(\f(\kappa)^\circ_{/K},\eta^{-1},j)}{(2\pi)^{2j}\langle \f(\kappa),\f(\kappa)\rangle_{N,p}}\,.
\end{aligned}
\end{align}
Combining \eqref{eqn_thmbellaichemain_etais12jequivk} and \eqref{eqn_THMloefflersinterpolationin_etais12jequivk} with \eqref{EQNpstabilizedpetersson}, we deduce that
\begin{align}
    \label{eqn_special_values_of_Rboldfkappas}
    \begin{aligned}
    L_p^{\RS}(\f_{/K},\kappa,j+\eta)=\langle N \rangle^{2j-k}  r(\kappa,\eta)L_p^{\naive}(\f_{/K},\kappa,j+\eta)
\end{aligned}
\end{align}
for all $k\neq \kappa\in I$ and $j\in \ZZ\cap [1,\kappa-1]$ with $j\equiv \frac{k}{2} \mod (p-1)$ as above. 

We note that all three functions 
$$L_p^{\RS}(\f_{/K},\kappa,s+\eta), \quad \langle N \rangle^{2s-k} r(\kappa,\eta),\quad L_p^{\naive}(\f_{/K},\kappa,s+\eta)$$
that appear in \eqref{eqn_special_values_of_Rboldfkappas} are continuous functions of $(\kappa,s)$, where the continuity of $r(\kappa,\eta)$ has been verified in the proof of Lemma~\ref{Lemma_rinterpolatesperiods}. Since the set
$$\{(\kappa,j): \kappa\in I,\quad \kappa\neq k \hbox{ and }  j\in \ZZ\cap [1,\kappa-1] \hbox{ with }  j\equiv k/2 \mod (p-1)\}$$
is dense in $U\times \ZZ_p$, it follows from \eqref{eqn_special_values_of_Rboldfkappas} that 
$$L_p^{\RS}(\f_{/K},\kappa,s+\eta)=\langle N \rangle^{2s-k}  r(\kappa,\eta)L_p^{\naive}(\f_{/K},\kappa,s+\eta)$$
as required.
\end{proof}

\begin{corollary} 
\label{cor_important_26_09_2021_1}
Let $\eta$ be a Dirichlet character of conductor $p^r$ and $p$-power order. Then,
\begin{equation}
    \label{eqn_cor_important_26_09_2021_1_statement}
    {R}_{\f/K}(k,\eta,s)=\eta(N)^2\langle N \rangle^{2s-k} (-1)^{\frac{k}{2}-1}|D_K|^{\frac{1}{2}}\frac{\Omega_{f}^\varepsilon\,\Omega_{f^K}^\varepsilon}{2^{k-2}{\cdot 8\pi^2}\left\langle f^\beta,f^\beta\right\rangle_{N,p}}\,.
\end{equation} 
\end{corollary}

\begin{proof}
Suppose that $\eta_0$ is a Dirichlet character of $p$-power order and conductor such that $L(f_{/K},\eta_0^{-1},\frac{k}{2})\neq 0$. Plugging in $\kappa=k$ and $\eta=\eta_0$ in  Lemma~\ref{lemma:Omegadependsonlyonweight}, we see that
\begin{equation}
    \label{eqn_cor_important_26_09_2021_1}
    {R}_{\f/K}(k,\eta_0,s)=\langle N \rangle^{s-\frac{k}{2}} \,r(k,\eta_0)\,.
\end{equation}
Equation \eqref{eqn_cor_important_26_09_2021_1} combined with \eqref{EQNpstabilizedpetersson} shows that 
\begin{equation}
    \label{eqn_cor_important_26_09_2021_2}
    {R}_{\f/K}(k,\eta_0,s)=\eta_0(N)^2\langle N \rangle^{2s-k} (-1)^{\frac{k}{2}-1}D_K^{\frac{1}{2}}\frac{\Omega_{f}^\varepsilon\,\Omega_{f^K}^\varepsilon}{2^{k-2}{\cdot 8\pi^2}\left\langle f^\beta,f^\beta\right\rangle_{N,p}}\,.
\end{equation}
Suppose now that $\eta$ is any Dirichlet character of $p$-power order and conductor. Plugging in 
$\kappa=k$ in  Lemma~\ref{lemma:Omegadependsonlyonweight}, we see that
\begin{equation}
    \label{eqn_cor_important_26_09_2021_3}
    {R}_{\f/K}(k,\eta,s)=\langle N \rangle^{s-\frac{k}{2}} \,r(k,\eta)\,.
\end{equation}
We also infer from \eqref{eqn_invariance_under_cyclotomic_translation} that 
\begin{equation}
    \label{eqn_cor_important_26_09_2021_4}
 r(k,\eta)=\eta\eta_0^{-1}(N)^2\, r(k,\eta_0)\,.
\end{equation}
Equation \eqref{eqn_cor_important_26_09_2021_1} and \eqref{eqn_cor_important_26_09_2021_3} together with \eqref{eqn_cor_important_26_09_2021_4} show that 
\begin{equation}
    \label{eqn_cor_important_26_09_2021_5}
 {R}_{\f/K}(k,\eta,s)=\eta\eta_0^{-1}(N)^2\, {R}_{\f/K}(k,\eta_0,s)\,.
\end{equation}
The identity \eqref{eqn_cor_important_26_09_2021_1_statement} follows on combining \eqref{eqn_cor_important_26_09_2021_2} and \eqref{eqn_cor_important_26_09_2021_5}. 
\end{proof}

\begin{corollary}\label{cor_Kob_vs_Stevens_at_f}
$${\displaystyle L_{p,\beta}^{\rm Kob}(f_{/K},s)=\langle N \rangle^{2s-k}(-1)^{\frac{k}{2}-1}|D_K|^{\frac{1}{2}}\frac{\Omega_{f}^\varepsilon\,\Omega_{f^K}^\varepsilon}{2^{k-2}{\cdot 8\pi^2}\left\langle f,f\right\rangle_{N}}\,L_p^{\Be}(f^\beta_{/\QQ},s)L_p^{\Be}(f^{\beta,K}_{/\QQ},s)\,.}$$
\end{corollary}
\begin{proof}
This is an immediate consequence of Corollary~\ref{cor_important_26_09_2021_1}, on recalling that our choices enforce the requirement that 
$C_{f^\beta}^\pm=C_{f^{\beta,K}}^\pm=1$.  
\end{proof}

\begin{remark}
\label{remark_GeoVsNek}
Only in this remark, $\underline{\bf{h}}$ denotes a primitive Hida family of tame level $N$ and $U_p$-eigenvalue $\bbalpha$. We let $h$ denote its specialization to weight $2r$; suppose $h$ is old at $p$ and let us write $\alpha$ for the $U_p$-eigenvalue on $h$. In this situation, Nekov\'a\v{r} in \cite[I.5.10]{NekovarGZ} constructed a two-variable $p$-adic $L$-function $L_p(h\otimes K,\,\cdot\,)$ associated to $h$. We let 
$$L_p^{\rm Nek}(h_{/K},s+\eta):=L_p(h\otimes K, \eta\langle\chi_\cyc\rangle^{s}\circ \mathbb{N}_{K/\QQ})$$ 
denote its restriction to cyclotomic characters, where as above, $\eta$ is a Dirichlet character of $p$-power order and conductor. 

In this particular case, the distribution $\mathscr{D}^{\RS}_{\underline{\bf{h}}}$ was constructed by Hida and it enjoys an interpolation property that is identical to one recorded in Theorem~\ref{THMloefflersinterpolation}. One may specialize $L_p^{\RS}(\underline{\bf{h}}_{/K},\kappa,s+\eta)$ to the $p$-stabilized form $h$ and obtain a $p$-adic $L$-function $L_{p,\alpha}^{\rm Kob}({{h}^\circ}_{/K},s+\eta)$ as above. One may compare\footnote{To do so: 
\begin{itemize}
\item In Nekov\'a\v{r}'s interpolation formula in \cite[Page 611]{NekovarGZ}, one takes $\mathscr{C}=\mathds{1}$ and $\mathscr{W}:=\eta\circ\mathbb{N}_{K/\QQ}$. 
    \item One expresses the root number denoted by $\tau(\eta\circ \mathbb{N})$ in \cite{NekovarGZ} (which appears in the interpolation formula on Page 611 of op. cit.) in terms of the Gauss sum $\tau(\eta)$ as follows (c.f. \cite{NekovarGZ}, \S5.13):
$\tau(\eta\circ\mathbb{N}_{K/\QQ})= \eta(D_K)\tau(\eta)^2/p^r$\,. 
\end{itemize}
 Then the expression $\tau(\mathscr{W})\,\overline{\mathscr{W}}((\sqrt{-D_K}))$ in op.cit. that appears in the interpolation formula on Page 611 equals 
$$\tau(\eta\circ\mathbb{N}_{K/\QQ})\eta^{-1}(D_K)=\tau(\eta)^2/p^r\,.$$
} the interpolation formulae for the respective distributions giving rise to $L_p^{\rm Nek}(h_{/K},s+\eta)$ and $L_{p,\alpha}^{\rm Kob}({{h}^\circ}_{/K},s+\eta)$ to deduce that
$$L_{p,\alpha}^{\rm Kob}({{h}^\circ}_{/K},s+r+\eta)=L_p^{\rm Nek}({{h}}_{/K},s+\eta)\,.$$
\end{remark}


\section{Proofs of Theorem~\ref{thm:padicGZoverKIntro} and Corollary~\ref{cor:padicGZoverQIntro}}
\label{sec:proofofCriticalGZ}
We shall assume until the end of this article that $K\neq \QQ(i), \QQ(\sqrt{-3})$. Notice in particular that $u_K=1$.
\subsection{$p$-adic Gross--Zagier formula for non-ordinary eigenforms at non-critical slope}
Suppose $g=\sum_{n=1}a_n(g)q^n\in S_{2r}(\Gamma_0(N))$ is a normalized eigenform. We let $a,b\in \overline{\QQ}$ denote the roots of its Hecke polynomial $X^2-a_p(g)X+p^{2r-1}$ at $p$. Suppose that $v_p(\iota_p(a_p(g)))>0$ and assume that
$$0<h:=v_p(\iota_p(b))< v_p(\iota_p(a))$$
so that we have $2h<2r-1$. Let $g^b \in S_{2r}(\Gamma_0(Np))$ denote the $p$-stabilization corresponding to the Hecke root $b$ and let $\g$ be a Coleman family which admits $g^b$ as its specialization in weight $2r$. Theorem~\ref{THMloefflersinterpolation} applies and equips us with a two-variable $p$-adic $L$-function $L_p^{\RS}(\g_{/K},\kappa,s)$. Let us set 
$$L_{p,b}^{\rm Kob}(g_{/K},s):=b\,\lambda_N(g)\mathcal{E}(g)\, \mathcal{E}^*(g)\,L_p^{\RS}(\g_{/K},2r,s)$$
as in Definition~\ref{DEF_TwoVarAmiceTransforms}. The following $p$-adic Gross--Zagier formula is Kobayashi's work~\cite{kobayashi_higherweight_nonord_GZ} in progress.

\begin{theorem}[Kobayashi]
\label{thm:noncriticalGZhigherweight}
$$\frac{d}{ds}L_{p,b}^{\rm Kob}(g_{/K},s)\big{\vert}_{s=r}=\left(1-\frac{p^{r-1}}{b}\right)^{4}\frac{h_{b,K}^{\rm Nek}(z_g,z_g)}{(4{|D_K|})^{r-1}}\,.$$
\end{theorem}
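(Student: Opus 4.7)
My plan is to adapt Kobayashi's strategy from~\cite{KobayashiGZ} (weight-two supersingular) to higher weight non-ordinary forms, using the triangulation formalism for $(\varphi,\Gamma_\cyc)$-modules in place of the $\pm$-decomposition. First, I would deform $g^b$ into a Coleman family $\g$ over an affinoid $\mathrm{Sp}(\scrA)$ of constant slope $h=v_p(b)<r-\tfrac12$, and invoke Theorem~\ref{thm:bigheegnermain} to produce a universal Heegner class $\mathscr{Z}_\g\in H^1(V_{\g/K},\mathbb{D}_\ubeta)$ whose specialization at weight $\kappa=2r$ recovers an explicit multiple of $z_g$. The $\scrA$-adic Rankin--Selberg $p$-adic $L$-function $L_p^{\RS}(\g_{/K},\kappa,s)$ of Loeffler--Zerbes is available thanks to Theorem~\ref{THMloefflersinterpolation}.

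The heart of the argument would be an explicit reciprocity law identifying, up to explicit Euler-like factors, the image of $\mathrm{loc}_\p(\mathscr{Z}_\g)$ under a Perrin-Riou-style big logarithm for the triangulation~\eqref{eqn:triangulationoverA} with $L_p^{\RS}(\g_{/K},\kappa,s)$. At sufficiently many crystalline classical specializations of $\g$ lying outside the range of interpolation, this identity should reduce to a Bertolini--Darmon--Prasanna type formula (in the vein of~\cite{JLZHC, BLGHCpatching}); extending it over the whole Coleman family would use the density of such specializations together with a rigidity argument analogous to those in~\cite{BLGHCpatching}.

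With the reciprocity law in hand, I would compute $\tfrac{d}{ds}L_{p,b}^{\rm Kob}(g_{/K},s)\vert_{s=r}$ via the Bockstein formalism of Section~\ref{subsec:Aadicheights}. The point is that $s=r$ is a forced trivial zero of $L_{p,b}^{\rm Kob}(g_{/K},s)$ coming from the explicit factor $\mathcal{E}(g)\mathcal{E}^*(g)$ (after unwinding through the na\"{\i}ve factorization in the style of Lemma~\ref{Lemma_rinterpolatesperiods}); passing to the derivative is therefore equivalent to composing the cyclotomic Bockstein map $\beta^{\mathrm{cyc}}_{V_g,\mathbb D_b}$ with the specialized Heegner class and cup-producting with itself, which by Definition~\ref{definition:height pairing} is exactly the $\scrA$-adic height pairing evaluated on $\mathscr{Z}_\g$. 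Specializing at $\kappa=2r$ then yields $\frak{h}_{g,b,K}^{1,1}(z_g,z_g)$ up to bookkeeping.

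The final step would be to convert this back into Nekov\'a\v{r}'s pairing $h_{b,K}^{\rm Nek}$ through Proposition~\ref{prop:comparisionofheightswithouttrivialzeros}: the correction factor $b\,\lambda_N(g)\,\mathcal{E}(g)\,\mathcal{E}^*(g)$ in the denominator of Benois's comparison cancels precisely the analogous factor relating $L_{p,b}^{\rm Kob}$ to $L_p^{\RS}$, while the Heegner normalization $u_K^{-1}(2\sqrt{-D_K})^{1-r}(1-p^{r-1}/b)^2$ of Theorem~\ref{thm:bigheegnermain}, applied in both entries of the height, squares to produce the advertised $(1-p^{r-1}/b)^4/(4|D_K|)^{r-1}$. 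The main obstacle I anticipate is Step~2: establishing an explicit reciprocity law valid uniformly over a non-ordinary Coleman family. This is genuinely harder than in the ordinary or weight-two supersingular cases, because one must handle the Perrin-Riou regulator on the rank-one $(\varphi,\Gamma_\cyc)$-submodule $\mathbb D_\ubeta$ over $\scrA$ and verify its compatibility with the Bloch--Kato logarithm at every classical crystalline specialization where a BDP-style formula is available.
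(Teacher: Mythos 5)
This theorem is not proved in the paper: it is stated and attributed to Kobayashi's work in progress (the citation \cite{kobayashi_higherweight_nonord_GZ}), and the paper treats it as an external input feeding into Corollary~\ref{cor:padicGZoverKThatInterpolates} and then Theorem~\ref{thm:twovarGZ}. There is therefore no internal proof to compare your sketch against. Given that, your proposal should be judged as a genuine attempt to prove an open statement, and on those terms there are two substantive problems.

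First, you propose a route that is essentially the opposite of Kobayashi's. As the introduction explains, Kobayashi's method is the classical analytic one of Gross--Zagier and Perrin-Riou: a Rankin--Selberg kernel computation comparing the cyclotomic derivative of a concretely constructed $p$-adic measure against archimedean and $p$-adic heights of Heegner cycles. You instead propose the ``explicit reciprocity $+$ Bockstein'' strategy in the style of Bertolini--Darmon--Prasanna. That is not wrong in principle, but it is a genuinely different program, and the entire weight of it falls on the unproved reciprocity law you acknowledge in your last paragraph --- identifying $\mathrm{loc}_{\p}(\mathscr{Z}_{\g})$ under a big logarithm over a non-ordinary Coleman family with $L_p^{\RS}(\g_{/K},\kappa,s)$. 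Until that step is established, the remainder of your sketch is formal bookkeeping; you have not proved the theorem, you have reduced it to a harder (or at least comparably hard, and currently unavailable) reciprocity statement.

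Second, there is a concrete misconception in the middle of your argument: the vanishing $L_{p,b}^{\rm Kob}(g_{/K},r)=0$ is \emph{not} a trivial (exceptional) zero, and it does not come from the factor $\mathcal{E}(g)\mathcal{E}^*(g)$. Since $g$ is non-ordinary and $v_p(b)<r-\tfrac12$, both $\mathcal{E}(g)$ and $\mathcal{E}^*(g)$ are nonzero, and the interpolation multiplier $\mathcal{V}(\f(\kappa)^\circ,\eta,j)$ in Theorem~\ref{THMloefflersinterpolation} is nonvanishing at the central point. The zero is \emph{global}: the Heegner hypothesis forces the sign of the functional equation of $L(g_{/K},s)$ to be $-1$, so the complex central value vanishes, and this is inherited by the $p$-adic $L$-function through the interpolation formula. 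This matters because the Bockstein/leading-term machinery you invoke is designed for exceptional zeros where a local Euler factor degenerates; extracting the first derivative at a functional-equation zero requires an actual identity (precisely a Gross--Zagier type reciprocity), not a formal Bockstein manipulation. Your normalization bookkeeping in the last step (cancelling $b\,\lambda_N(g)\,\mathcal{E}(g)\,\mathcal{E}^*(g)$ against the Benois comparison, and squaring the Heegner normalization to get $(1-p^{r-1}/b)^4/(4|D_K|)^{r-1}$) is correct and matches the way the paper wields the theorem downstream, but it does not bear on the missing analytic or reciprocity-theoretic core of the statement.
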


Recall that we have to assume $K\neq \QQ(i),\QQ(\sqrt{-3})$ since we rely on Kobayashi's results here, so that $u_K=1$.

\begin{corollary}
\label{cor:padicGZoverKThatInterpolates} Suppose that $\kappa\in I$ as in \S\ref{subsec:specializeheights} with $\kappa\geq 2k$.
\item[i)]$\psi_\kappa\circ \mathfrak{h}_{\f,K}\left(\mathscr{Z}_{\f},\mathscr{Z}_{\f}\right)=\dfrac{\left(1-\frac{p^{\frac{\kappa}{2}-1}}{b}\right)^{4}}{(4|D_K|)^{\frac{\kappa}{2}-1}}\dfrac{h_{\ubeta(\kappa),K}^{\rm Nek}(z_{\f(\kappa)^\circ},z_{\f(\kappa)^\circ})}{\ubeta(\kappa)\lambda_N(\f(\kappa)^\circ)\mathcal{E}(\f(\kappa))\mathcal{E}^*(\f(\kappa))}$.
\item[ii)]$\dfrac{d}{ds}{L_{p}^{\RS}(\f_{/K},\kappa,s)}\big{\vert}_{s=\frac{\kappa}{2}}= \psi_\kappa\circ \mathfrak{h}_{\f,K}\left(\mathscr{Z}_{\f},\mathscr{Z}_{\f}\right).$
\end{corollary}
\begin{proof}
\item[i)] To lighten our notation, let us put $\mathfrak{h}_{\f(\kappa),\ubeta(\kappa),K}:=\mathfrak{h}$. We then have
\begin{align*}
\psi_\kappa\circ \mathfrak{h}_{\f,K}\left(\mathscr{Z}_{\f},\mathscr{Z}_{\f}\right)&=\mathfrak{h}\left(\psi_\kappa(\mathscr{Z}_{\f})\,,\,\psi_\kappa(\mathscr{Z}_{\f})\right)\\
&=\dfrac{\left(1-\frac{p^{\frac{\kappa}{2}-1}}{b}\right)^{2}}{(2\sqrt{-D_K})^{\frac{\kappa}{2}-1}}\mathfrak{h}\left(\psi_\kappa(\mathscr{Z}_{\f}),\,\frac{W_{Np}\circ({\rm Pr}_{\ubeta(\kappa)})^*({z_{\f(\kappa)^\circ}})}{\ubeta(\kappa)\lambda_N(\f(\kappa)^\circ)\mathcal{E}(\f(\kappa))\mathcal{E}^*(\f(\kappa))} \right)\\
&=\dfrac{\left(1-\frac{p^{\frac{\kappa}{2}-1}}{b}\right)^{2}}{(2\sqrt{-D_K})^{\frac{\kappa}{2}-1}}\dfrac{h_{\ubeta(\kappa),K}^{\rm Nek}\left(({\rm Pr}_{\ubeta(\kappa)})_*\circ\psi_\kappa(\mathscr{Z}_{\f}),\, W_N(z_{\f(\kappa)^\circ}) \right)}{\ubeta(\kappa)\lambda_N(\f(\kappa)^\circ)^2\mathcal{E}(\f(\kappa))\mathcal{E}^*(\f(\kappa))}\\
&=\dfrac{\left(1-\frac{p^{\frac{\kappa}{2}-1}}{b}\right)^{4}}{(4|D_K|)^{\frac{\kappa}{2}-1}}\dfrac{h_{\ubeta(\kappa),K}^{\rm Nek}\left(W_N(z_{\f(\kappa)^\circ}),\, W_N(z_{\f(\kappa)^\circ}) \right)}{\ubeta(\kappa)\lambda_N(\f(\kappa)^\circ)^3\mathcal{E}(\f(\kappa))\mathcal{E}^*(\f(\kappa))}\\
&=\dfrac{\left(1-\frac{p^{\frac{\kappa}{2}-1}}{b}\right)^{4}}{(4|D_K|)^{\frac{\kappa}{2}-1}}\dfrac{h_{\ubeta(\kappa),K}^{\rm Nek}\left(z_{\f(\kappa)^\circ},\, z_{\f(\kappa)^\circ} \right)}{\ubeta(\kappa)\lambda_N(\f(\kappa)^\circ)\mathcal{E}(\f(\kappa))\mathcal{E}^*(\f(\kappa))}
\end{align*}
where the first equality follows from the upper half of the diagram \eqref{eqn:comaprisonofheights_2}, the second from \eqref{eqn_bigheegnermain_1}, the third from the lower half of the diagram \eqref{eqn:comaprisonofheights_2} and the fact that $\Lambda_N(\f(\kappa)^\circ)^{-1}W_N$ is a self-adjoint involution (see also Remark~\ref{rem_compare_V_g_V_g_star_Atkin} for the usage of the involution $\Lambda_N(\f(\kappa)^\circ)^{-1}W_N:V_{\f(\kappa)}\stackrel{\sim}{\lra}V_{\f(\kappa)}$), the forth from \eqref{eqn_bigheegnermain_2} and last by the fact that $\Lambda_N(\f(\kappa)^\circ)^{-1}W_N$ is a self-adjoint involution. This completes the proof of the first part.
\item[ii)] The asserted equality is a restatement of Theorem~\ref{thm:noncriticalGZhigherweight}(ii), taking (i) into account and recalling from Definition~\ref{DEF_TwoVarAmiceTransforms} the relation between $L_{p}^{\RS}(\f_{/K},\kappa,s)$ and $L_{p,\ubeta(\kappa)}^{\rm Kob}(\f(\kappa)^\circ_{/K},s)$.
\end{proof}

\begin{remark}
Note that the assumption that $\kappa\geq 2k$ guarantees that we have $2v_p(\ubeta(\kappa))<\kappa-1$, as required to apply Kobayashi's Theorem~\ref{thm:noncriticalGZhigherweight}. 

The reason why we record this alteration of Theorem~\ref{thm:noncriticalGZhigherweight} here is because both sides of the asserted equality in Corollary~\ref{cor:padicGZoverKThatInterpolates}(ii) interpolate well as $\kappa$ varies (unlike its predecessor Theorem~\ref{thm:noncriticalGZhigherweight}). See {Remark~\ref{REM_KobayashiNekDoesNotInterpolate}}.  
\end{remark}

\subsection{$\mathscr{A}$-adic Gross--Zagier formula}
Recall the Coleman family $\f$ over the affinoid algebra $\mathscr{A}=\mathscr{A}(U)$ from \S\ref{subsubsec:colemanfamilyintro}. Recall also the $\mathscr{A}$-valued cyclotomic height pairing $\mathfrak{h}_{\f,K}$ we have introduced in \S\ref{subsec:Aadicheights} and the universal Heegner point $\mathscr{Z}_{\f}\in (V_{\f_{/K}},\mathbb D_{\ubeta})$ given as in Theorem~\ref{thm:bigheegnermain}. Recall finally also the two-variable $p$-adic $L$-function $L_p^{\RS}(\f_{/K},\kappa,s)$ from \S\ref{sec:padicLfunctions}.

\begin{defn}
\label{DEF_AmiceTransformOfUniversalHeight}
Let us write $H_{\f,K}$ for the Amice transform of the height pairing $\mathfrak{h}_{\f,K}$. In explicit terms, 
$$H_{\f,K}(x,y):=\mathfrak{h}_{\f,K}(x,y)\big{\vert}_{w=(1+p)^{\kappa-k}-1}\,.$$
\end{defn}
\begin{theorem}[$\mathscr{A}$-adic Gross--Zagier formula]
 With the notation as above, the following identity is valid in $\mathscr{A}:$
\label{thm:twovarGZ} 
$$\frac{\partial}{\partial s}L_p^{\RS}\left(\f_{/K},\kappa,s+\frac{\kappa-k}{2}\right)\Big{\vert}_{s=\frac{k}{2}}=H_{\f,K}(\mathscr{Z}_{\f},\mathscr{Z}_{\f})\,.$$
\end{theorem}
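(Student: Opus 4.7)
The plan is to show that both sides of the claimed identity define rigid-analytic functions on $U={\rm Sp}\,\scrA$ and agree on a Zariski-dense subset, forcing equality in $\scrA$.

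First I would verify that both sides lie in $\scrA$. The right-hand side is, by Definition~\ref{DEF_AmiceTransformOfUniversalHeight}, the element $\frak{h}_{\f,K}(\mathscr{Z}_\f,\mathscr{Z}_\f)\in\scrA$ re-expressed as a function of $\kappa$ via $w=(1+p)^{\kappa-k}-1$; membership in $\scrA$ follows from the construction of the $\scrA$-adic height in Section~\ref{subsec:Aadicheights} applied to the classes $\mathscr{Z}_\f\in H^1(V_{\f_{/K}},\mathbb{D}_{\ubeta})$ produced in Theorem~\ref{thm:bigheegnermain}. For the left-hand side, $L_p^{\RS}(\f_{/K},\kappa,s)$ is assembled from the distribution $\mathscr{D}_{\f_{/K}}^{\RS}\in\mathscr{H}_{\scrA}=\mathscr{H}\,\widehat{\otimes}\,\scrA$ together with $\scrA$-valued multipliers involving $\ubeta$ and $\lambda_N(\f)$, so applying $\partial/\partial s$ and then evaluating at a value of $s$ that depends rigid-analytically on $\kappa$ again yields an element of $\scrA$. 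A chain rule calculation then simplifies the LHS to $\frac{d}{du}L_p^{\RS}(\f_{/K},\kappa,u)\big|_{u=\kappa/2}$.

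Next I would invoke Corollary~\ref{cor:padicGZoverKThatInterpolates}, which asserts precisely that for every $\kappa\in I$ with $\kappa\geq 2k$ one has $\frac{d}{du}L_p^{\RS}(\f_{/K},\kappa,u)\big|_{u=\kappa/2}=\psi_\kappa\circ\frak{h}_{\f,K}(\mathscr{Z}_\f,\mathscr{Z}_\f)$. Since $\psi_\kappa$ sends $w\mapsto (1+p)^{\kappa-k}-1$, the right-hand side of this identity is exactly the value of $H_{\f,K}(\mathscr{Z}_\f,\mathscr{Z}_\f)$ at $\kappa$. The corollary synthesizes three inputs whose roles I would recall explicitly: Kobayashi's formula (Theorem~\ref{thm:noncriticalGZhigherweight}) applied to the specialization $\f(\kappa)$, whose slope $k-1$ is strictly less than $\kappa/2-1/2$ as soon as $\kappa\geq 2k$; the specialization diagram \eqref{eqn:comaprisonofheights}, which identifies $\psi_\kappa\circ\frak{h}_{\f,K}$ with $h^{\rm Nek}_{\ubeta(\kappa),K}$ up to the factor $\ubeta(\kappa)\,\lambda_N(\f(\kappa)^\circ)\,\mathcal{E}(\f(\kappa))\,\mathcal{E}^*(\f(\kappa))$ that is precisely the correction between $L_p^{\RS}$ and $L^{\rm Kob}_{p,\ubeta(\kappa)}$; and the interpolation formula of Theorem~\ref{thm:bigheegnermain} relating $\psi_\kappa(\mathscr{Z}_\f)$ to the Heegner cycle $z_{\f(\kappa)^\circ}$.

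Finally I would conclude by density: the set $\{\kappa\in I:\kappa\geq 2k\}$ is infinite and accumulates in the closed disk $U=\overline B(k,p^{-r})$, hence is Zariski-dense in ${\rm Sp}\,\scrA$. Since $\scrA$ is a reduced affinoid algebra, any element vanishing on such a set must be zero, so the difference of the two sides of the theorem vanishes identically in $\scrA$. The main obstacle in the plan will be the first step, namely making rigorous that the LHS genuinely defines an element of $\scrA$: differentiating in $s$ inside the two-variable object $L_p^{\RS}(\f_{/K},\kappa,s)$ and then substituting at a $\kappa$-dependent value requires a careful check that the resulting map is rigid-analytic in $\kappa$, since the Amice-transform formalism of Section~\ref{sec:padicLfunctions} is formulated in a mixed $(\kappa,s)$-variable setup where commuting derivation with specialization is not automatic. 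Once that formal verification is in place, the rest of the argument is a clean density reduction to Kobayashi's $p$-adic Gross--Zagier formula at the non-critical-slope specializations of $\f$.
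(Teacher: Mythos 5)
Your proposal is correct and follows essentially the same route as the paper: both sides are elements of $\mathscr{A}$, their difference vanishes at all $\kappa\in I\cap\ZZ_{\geq 2k}$ by Corollary~\ref{cor:padicGZoverKThatInterpolates} together with the interpolation properties of the height pairing \eqref{eqn:comaprisonofheights} and of the universal Heegner cycle (Theorem~\ref{thm:bigheegnermain}), and one concludes by density of these weights in $U$. The only difference is that you flag and discuss the rigid-analyticity of the $\kappa$-dependent $s$-derivative more explicitly than the paper, which takes it for granted.
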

\begin{proof}
Consider the difference 
$$\mathfrak{D}(\kappa):=\frac{\partial}{\partial s}L_p^{\RS}\left(\f_{/K},\kappa,s+\frac{\kappa-k}{2}\right)\Big{\vert}_{s=\frac{k}{2}}-{H}_{\f,K}(\mathscr{Z}_{\f},\mathscr{Z}_{\f})\,.$$
It follows from the interpolative properties of $L_p^{\RS}(\f_{/K},\kappa,s)$, the $\mathscr{A}$-adic height pairing $\mathfrak{h}_{\f,K}$ outlined in \eqref{eqn:comaprisonofheights} and that of the universal Heegner cycle (Theorem~\ref{thm:bigheegnermain}) together with Corollary~\ref{cor:padicGZoverKThatInterpolates}(ii) that
 $$\mathfrak{D}(\kappa)=0,\,\, \forall\,\, \kappa\in I\cap \ZZ_{\geq 2k}\,.$$
By the density of $ I\cap \ZZ_{\geq 2k}$ in the affinoid $U$, we conclude that $\mathfrak{D}$ is identically zero, as required.
\end{proof}
\subsection{Proof of Theorem~\ref{thm:padicGZoverKIntro}}\label{subsec_proof_of_Cor_thm_padicGZoverKIntro} On specializing the statement of Theorem~\ref{thm:twovarGZ} ($\mathscr{A}$-adic Gross--Zagier formula) to $\kappa=k$ and relying once again on the interpolative properties of the $\mathscr{A}$-adic height pairing $\mathfrak{h}_{\f,K}$ and that of the universal Heegner cycle $\mathscr{Z}_{\f}$, the proof of Theorem~\ref{thm:padicGZoverKIntro} follows at once. \qed

\subsection{Proof of Corollary~\ref{cor:padicGZoverQIntro}}
\label{subsec_proof_of_Cor_cor_padicGZoverQIntro}
Recall that we are assuming that the weight $k=2$. Recall also that $A_f/\QQ$ stands for the abelian variety of $\GL_2$-type that the Eichler-Shimura congruences associate to $f$ and that we assume that $L(f/\QQ,s)$ has a simple zero at $s=1$.

It follows from the classical (complex) Gross--Zagier formula and Theorem~\ref{thm:padicGZoverKIntro} that
\begin{equation}
\label{eqn:compareGZformulaecomplexvspadic} 
\frac{d}{ds}L_{p,\beta}^{\rm Kob}(f_{/K},s)\big{|}_{s=1}=\left(1-\frac{1}{\beta}\right)^4\frac{L^\prime(A_f/K,1)\,|D_K|^{\frac{1}{2}}}{\langle P_{f}, P_f\rangle_{\infty,K}\,8\pi^2\,\langle f,f\rangle_{N}}h_{\beta,K}^{\rm Nek}(P_f,P_f)
\end{equation}
where we recall that $P_f \in A_f(K)$ is the Heegner point and $\langle\,,\,\rangle_{\infty,K}$ is the N\'eron-Tate height pairing over $K$. Since we know in our set-up that ${\rm Tr}_{K/\QQ}P_f$ is non-torsion, it is a non-zero multiple of $P$ within the one-dimensional $\QQ$-vector space $A_f(\QQ)\otimes\QQ$. We may therefore replace in (\ref{eqn:compareGZformulaecomplexvspadic}) the height pairings of $P_f$ over $K$ with those of $P$ over $\QQ$ to deduce that 
\begin{equation}
\label{eqn:compareGZformulaecomplexvspadic2}
\frac{d}{ds}L_{p,\beta}^{\rm Kob}(f_{/K},s)\big{|}_{s=1}=\left(1-\frac{1}{\beta}\right)^4\frac{L^\prime(A_f/K,1)|D_K|^{\frac{1}{2}}}{\langle P,P\rangle_{\infty}\,8\pi^2\,\langle f,f\rangle_{N}}h_{\beta,\QQ}^{\rm Nek}(P,P).
\end{equation}
On the other hand, we have 
\begin{equation}
\label{eqn:LfunctionoverKvsQ} 
L^\prime(A_f/K,1)=L^\prime(A_f/\QQ,1)L(A_f^K/\QQ,1)
\end{equation} 
and by Corollary~\ref{cor_Kob_vs_Stevens_at_f} together with the interpolation property of $L_{p,\beta}(f^K_{/\QQ},s)$ (c.f. Theorem~\ref{thmbellaichemain}),
\begin{align}
\label{eqn:padicLoverKvsQ} \begin{aligned}
\frac{d}{ds}L_{p,\beta}^{\rm Kob}(f_{/K},s)\big{|}_{s=1}={\frac{\Omega_f^+\Omega^+_{f^K}|D_K|^{\frac{1}{2}}}{8\pi^{2}\langle f,f\rangle_N}}&\times\frac{d}{ds}L_{p,\beta}^{\Be}(f_{/\QQ},s)\big{|}_{s=1}\\
&\times\left(1-\frac{1}{\beta}\right)^2\frac{L(A_f^K/\QQ,1)}{\Omega^+_{f^K}}\,.
\end{aligned}
\end{align}
Plugging the identities \eqref{eqn:LfunctionoverKvsQ} and \eqref{eqn:padicLoverKvsQ} in \eqref{eqn:compareGZformulaecomplexvspadic2}, the desired equality follows. \qed


\section{Applications}
\label{sec:applications}
We shall illustrate various applications of the $p$-adic Gross--Zagier formula at critical slope (Theorem~\ref{thm:padicGZoverKIntro} and Corollary~\ref{cor:padicGZoverQIntro}). These were already recorded in \S\ref{sec:Intro} as Theorems~\ref{thm:PRconjectureIntro}, \ref{thm:padicheightnontrivialIntro} and \ref{thm:BSDformulainrankoneIntro}. Before we give proofs of these results, we set some notation and record a number of preliminary results.

\subsection{Perrin-Riou's big logarithm map and $p$-adic $L$-functions}
\label{subsec_PRbigLog_padicLBK}
Until the end of this article, we assume that $f=f_A\in S_2(\Gamma_0(N))$ is a newform that comes attached to an elliptic curve $A/\QQ$. 

We assume that $A$ is non-CM. According to Serre~\cite{serre68}, this in turn means that the local representation $\rho_f\,{\vert_{G_{\QQ_p}}}$ is indecomposable and by a result of Breuil--Emerton~\cite{BreuilEmerton2010}, this in turn translates to the requirement that $f$ does not admit a $\theta$-critical $p$-stabilization. 

We assume also that the residual representation $\overline{\rho}_f=\overline{\rho}_A$ is absolutely irreducible. This final assumption amounts to the requirement that $E$ does not admit a rational $p$-isogenty, which is guaranteed thanks to Mazur's work~\cite{mazur78} whenever $p\not\in \{3,5,7,13,37\}$. Our assumption that $L(f/\QQ,s)$ has a simple zero at $s=1$ is also in effect until the end. 

Set $T:=(\varprojlim A(\overline{\QQ})[p^n])$ and $V=T\otimes_{\Zp}\Qp$. Let $\mathscr{A}_{/\ZZ}$ be a minimal Weierstrass model of the elliptic curve $A$. Let $\omega_{\mathscr A}$ denote a N\'eron differential that is normalized so that $\Omega^+_{A}:=\int_{A(\mathbb{R})}\omega_{\mathscr{A}}>0$. Let $\pi: X_0(N)\to E$ be a modular parametrization of $E$ and let $c_\pi$ denote the Manin constant, so that we have $\pi^*\omega_{\mathscr A}=c_\pi 2\pi i  f(z)dz=c_\pi\omega_f$. In our setting, it follows from~\cite[Corollary 4.1]{mazur78}, which states that if $p\mid c_\pi$ then $p^2\mid 4N$, that $c_\pi$ is a $p$-adic unit.

The Heegner point $P_f\in E(K)\otimes \QQ$ is given as 
$$P_f:=c_{\pi}^{-1}\,{\rm tr}_{H_K/K}\,\pi(z)$$
where $z\in X_0(N)(H_K)$ is any choice of a Heegner point of conductor $1$. Our requirement that $\ord_{s=1}L(E,s)=1=\ord_{s=1}L(E/K,s)$ together with the fact $c_\pi\in \ZZ_p^\times$ shows that
$$P_f\in  E(\QQ)\otimes \ZZ_p\xrightarrow{\sim} H^1_{\rm f}(\QQ,T).$$
Let $P\in E(\QQ)$ be any lift of a generator of the cyclic group $E(\QQ)/E(\QQ)_{\rm tor}$ and let $d\in \QQ^\times$ denote the unique rational number with 
\begin{equation}
    \label{eqn_P_vs_Pf_final_section}
    P_f=P\otimes d\quad \in\, E(\QQ)\otimes\QQ\,.
\end{equation}
Note that $d\in \Zp^\times$, but we will not rely on this observation in what follows.

Let $\omega_{\rm cris}\in \DD_{\rm cris}(V)$ denote the element that corresponds to $\omega_{\mathscr A}$ under the comparison isomorphism. We choose a $\varphi$-eigenbasis $\{\omega_\alpha,\omega_\beta\}$ of $\DD_{\rm cris}(V)$ (where $\omega_\lambda$ is a $\varphi$-eigenvector with the eigenvalue $\lambda^{-1}$) by the requirement that
$$\omega_\alpha+\omega_\beta=\omega_{\rm cris}\,.$$
Note that such an eigenbasis exists since we are assuming that $A$ is non-CM.

We denote by 
$$\mathfrak{Log}_V: H^1(\QQ_p,V\otimes\LL^{\iota})\otimes \mathscr{H} \lra \DD_{\textup{cris}}(V)\otimes\mathscr{H} $$
Perrin-Riou's big dual exponential map, and by
$$\mathfrak{L}_{\rm PR}:=\mathfrak{Log}_V\circ\res_p \left(\mathbb{BK}_1\right) \in \DD_{\textup{cris}}(V)\otimes  \mathscr{H}$$
Perrin-Riou's vector valued $p$-adic $L$-function where $\mathbb{BK}_1 \in  H^1(\QQ,V\otimes\LL^{\iota})$ is the normalized Beilinson--Kato element as in~\cite[\S13.9]{kato04}. Note that the Beilinson--Kato element depends on the choice of a Shimura period $\Omega_f^\pm$,  we fix this choice on requiring that $\Omega_f^\pm=\Omega_A^\pm$.

We define $\mathfrak{L}_{\rm PR}^{(\alpha)},\mathfrak{L}_{\rm PR}^{(\beta)}\in \mathscr{H}_E$ as the coordinates of $\mathfrak{L}_{\rm PR}$ with respect to the basis $\{\omega_\alpha,\omega_\beta\}$, so that we have
$$\mathfrak{L}_{\rm PR}=\mathfrak{L}_{\rm PR}^{(\alpha)}\,\,\omega_\alpha+\mathfrak{L}_{\rm PR}^{(\beta)}\,\,\omega_\beta\,.$$

\begin{defn}
Set $\mathscr{D}_{f^\beta_{/\QQ}}:=\psi_2(\mathscr{D}_{\f_{/\QQ}}^{\Be}) \in \mathscr{D}(\Gamma)$. Associated to the other ($p$-ordinary) stabilization $f^\alpha$ of $f$, we also have the Mazur--Swinnerton-Dyer measure $\mathscr{D}_{f^\alpha_{/\QQ}}\in \mathscr{D}_0(\Gamma)$. 

For $\lambda=\alpha,\beta$, we set $L_{p,\lambda}(f,s):=L_p(\mathscr{D}_{f^\lambda_{/\QQ}},s)$.
\end{defn}

We remark that the measure $\mathscr{D}_{f^\alpha_{/\QQ}}$ is characterized by an interpolation formula identical to that for the distribution $\mathscr{D}_{f^\beta_{/\QQ}}$ (c.f. Theorem~\ref{thmbellaichemain}), exchanging every $\alpha$ in the formula with $\beta$ and vice versa (but we recall that the interpolation property in Theorem~\ref{thmbellaichemain} does \emph{not} characterize $\mathscr{D}_{f^\beta_{/\QQ}}$ itself).

The following result is due to Kato (when $\lambda=\alpha$) and to Benois and the first named author~\cite[Theorem~A]{BenoisBuyukbodukCKPRVolume} (when $\lambda=\beta$, see also the related work by Hansen~\cite{hansenBSP}, Ochiai~\cite{OchiaiIwasawa2018} and Wang~\cite{Wang2021} in this scenario).
\begin{theorem}
\label{thm:katohansen}
For each $\lambda\in \{\alpha,\beta\}$ we have $\mathfrak{L}_{\rm PR}^{(\lambda)}= \mathscr{D}_{f^\lambda_{/\QQ}}\,.$
\end{theorem}

\subsubsection{Proof of Theorem~\ref{thm:padicheightnontrivialIntro} (non-triviality of $p$-adic heights)}
\label{subsubsec_proof_of_thm_padicheightnontrivialIntro}
Suppose on the contrary that both $h_{\alpha,\QQ}^{\rm Nek}$ and $h_{\beta,\QQ}^{\rm Nek}$ were trivial. It follows from Corollary~\ref{cor:padicGZoverQIntro}  and Perrin-Riou's $p$-adic Gross--Zagier formula for the slope-zero $p$-adic $L$-function $L_{p}(\mathscr{D}_{f^\alpha{/\QQ}},s)$ that 
$$\mathds{1}\left(\mathfrak{L}_{\rm PR}^\prime\right)=0\,.$$ 
Using \cite[Proposition 2.2.2]{PerrinRiou93Rubinsformula}, we conclude that $\log_V(\rm{BK}_1)=0$, or equivalently, that $\res_p(\rm{BK}_1)=0$. Since ${\rm ord}_{s=1}L(f/\QQ,s)=1$, the theorem of Kolyvagin  shows that the compositum of the arrows
$$A(\QQ)\,{\otimes}_{\ZZ}\Qp\stackrel{\sim}{\lra} H^1_{\rm f}(\QQ,V)\stackrel{\res_p}{\lra} H_{\rm f}(\QQ_p,V)=A(\QQ_p)\otimes_{\ZZ} \Qp$$
is injective. It follows that ${\rm BK}_1\in H^1_{\rm f}(\QQ,V)$ is a torsion class, contradicting \cite[Theorem 1.2]{kbbIwasawa2017}.
\qed

\subsection{Birch and Swinnerton-Dyer formula for analytic rank one (Proof of Theorem~\ref{thm:BSDformulainrankoneIntro})}
\label{sec:BSD}
We retain the set-up and hypotheses in \S\ref{subsec_PRbigLog_padicLBK}. In particular, we continue to assume that ${\ord}_{s=1}L(A_{/\QQ},s)=1$.

\begin{proof}[Proof of Theorem~\ref{thm:BSDformulainrankoneIntro}] Since the Iwasawa main conjecture for $A$ holds true under our running assumptions (c.f. Remark~\ref{intro_remark_IMC}), Perrin-Riou's leading term formulae for her module of $p$-adic $L$-functions\footnote{Note that the element $\mathfrak{L}_{\rm PR}$ we have introduced above is a generator of this module since we assume the truth of main conjectures.} in \cite[\S 3]{PerrinRiou93Rubinsformula} together with Theorem~\ref{thm:katohansen} and Theorem~\ref{thm:padicheightnontrivialIntro} show that the $p$-adic Birch and Swinnerton-Dyer conjecture (which corresponds to the statement ${\rm BSD}_{\DD_\lambda}(V)$ in \cite[Proposition 3.4.6]{PerrinRiou93Rubinsformula}) for $A$ is true up to $p$-adic units:
\begin{align}\label{EQNBSDfrakPadic}
{\rm ord}_{p}\left(\left(1-{1}/{\lambda}\right)^{-2} \frac{ L_{p,\lambda}^\prime(f/\QQ,1)}{{\rm Reg}_{p}(A/\QQ)}\right)={\rm length}_{\Zp}&\left(\hbox{III}(A/\QQ)[p^\infty]\right) \\
\notag&\,\,\,\,\,\,\,\,\,\,\,\,\,\,\,\,\,\,+{\rm ord}_{p}{\rm Tam}(A/\QQ)\,.
\end{align}
Here, ${\rm Tam}(A/\QQ):=\prod_{\ell\mid N}c_\ell(A/\QQ)$ and $c_\ell(A/\QQ)$ is the Tamagawa factor at $\ell$, and
\begin{equation}\label{EQN_sigmapartofregulator}
{\rm Reg}_{p}(A/\QQ):=h_{\lambda,\QQ}^{\rm Nek}(P,P)=\frac{h_{\lambda,\QQ}^{\rm Nek}(P_{f},P_{f})}{d^2}\,,
\end{equation} 
where we recall that $P\in E(\QQ)$ is a lift of a generator of $E(\QQ)/E(\QQ)_{\rm tor}$ and $d\in \QQ^\times$ is as in \eqref{eqn_P_vs_Pf_final_section}.  Notice that the terms concerning the torsion group $A(\QQ)[p^\infty]$ are omitted from \eqref{EQNBSDfrakPadic}, as they are both trivial since we assume that $\overline{\rho}_f=A(\overline{\QQ})[p]$ is absolutely irreducible by assumption.

Applying either Corollary~\ref{cor:padicGZoverQIntro} (if the $p$-adic valuation of $\lambda$ is $1$) or Perrin-Riou's $p$-adic Gross--Zagier formula at slope-zero (if $\lambda$ is a $p$-adic unit), we see that
\begin{equation}\label{EQNGZquotientsatsigma}
-\frac{L^\prime(f/\QQ,1)}{{\rm Reg}_{\infty}(A/\QQ)\,{\Omega_{f}^+}}= \left(1-{1}/{\lambda}\right)^{-2} \frac{L_{p,\lambda}^\prime(f,1)} {{\rm Reg}_{p}(A/\QQ)} 
\end{equation}
 where ${\rm Reg}_{\infty}(A/\QQ):={\langle P_{f},P_{f}\rangle_\infty}/d^2$. Combining \eqref{EQNBSDfrakPadic} and \eqref{EQNGZquotientsatsigma}, we infer that 
\begin{align*}
{\rm ord}_{p}\left(-\frac{L^\prime(f/\QQ,1)}{{\rm Reg}_{\infty}(A/\QQ) \,{\Omega_{f}^+}}\right)={\rm length}_{\Zp}&\left(\hbox{III}(A/\QQ)[p^\infty]\right) +{\rm ord}_{p}{\rm Tam}(A/\QQ)\,.
\end{align*}
Since $L(f,s)=L(E/\QQ,s)$ and $\Omega_f^+=\Omega_A^+$, the proof of Theorem~\ref{thm:BSDformulainrankoneIntro} follows. 
\end{proof}

\subsection{Proof of Theorem~\ref{thm:PRconjectureIntro} (Perrin-Riou's conjecture)}
\label{sec:PRsconj}
We continue to assume throughout \S\ref{sec:PRsconj}, as in \S\ref{subsec_PRbigLog_padicLBK}-\S\ref{sec:BSD}, that $f=f_A$ is an eigenform of weight $2$, which is associated to a non-CM elliptic curve $A_{/\QQ}$ that has good ordinary reduction at $p$ and that has analytic rank one. We also assume that $\overline{\rho}_A$ is absolutely irreducible. 

We shall follow the argument in the proof of Theorem 2.4(iv) of \cite{kbbIwasawa2017} very closely, where the analogous assertion has been verified in the case when the prime $p$ is a prime of good supersingular   reduction. Essentially, the argument in op.\ cit.\ works verbatim, on replacing all references to Kobayashi's work with references to  Corollary~\ref{cor:padicGZoverQIntro}, Perrin-Riou's $p$-adic Gross--Zagier formula at slope zero and Theorem~\ref{thm:padicheightnontrivialIntro}. We summarize it here for the convenience of the readers. 

Let us write
$$\mathfrak{Log}_{V}=\mathfrak{Log}_{V,\alpha}\cdot \omega_\alpha+\mathfrak{Log}_{V,\beta}\cdot\omega_\beta\,.$$
Let $\omega^*_{\mathscr{A}} \in \DD_{\textup{cris}}(V)/\textup{Fil}^0\DD_{\textup{cris}}(V)$ denote the unique element such that $[\omega_{\mathscr{A}},\omega^*_{\mathscr{A}}]=1$, where
$$[\,,\,]:\, \DD_{\textup{cris}}(V) \otimes \DD_{\textup{cris}}(V) \lra \QQ_p$$
is the natural pairing. We define  $\log_A(\res_p({\rm BK}_1))$ according to the identity 
$$\log_{V}(\res_p({\rm BK}_1))=\log_A(\res_p({\rm BK}_1))\cdot\omega^*_{\mathscr A}\,.$$
The dual basis of $\{\omega_\alpha,\omega_\beta\}$ with respect to the pairing $[\,,\,]$ is $\{\omega_\beta^*,\omega_\alpha^*\}$, where $\omega_\beta^*$ (respectively, $\omega_\alpha^*$) is the image of $\omega^*_{\mathscr A}$ under the inverse of the isomorphism 
$$s_{D_\beta}:\,D_\beta\stackrel{\sim}{\ra} \DD_{\textup{cris}}(V)/\textup{Fil}^0\DD_{\textup{cris}}(V)$$ (respectively, under the inverse of $s_{D_\alpha}$). Let $\lambda\in \{\alpha,\beta\}$ be such that the height pairing $h_{\lambda,\QQ}^{\rm Nek}$ is non-trivial and let $\lambda^*$ be given so that $\{\lambda,\lambda^*\}=\{\alpha,\beta\}$. Then,
\begin{align}
\notag(1-\frac{1}{\lambda})^2\cdot c(f)\cdot h_{\lambda,\QQ}^{\rm Nek}(P_f,P_f)&=L_{p,\lambda}^\prime(f,1)\\
\notag&=\mathds{1}\left(\mathfrak{Log}_{V,\lambda}(\partial_\lambda\mathbb{BK}_1)\right)\\
\notag&=\left[\exp^*(\partial_\lambda{{\rm BK}}_1),(1-p^{-1}\varphi^{-1})(1-\varphi)^{-1}\cdot\omega_{\lambda^*}^*\right]\\
\notag&=(1-\frac{\lambda^*}{p})(1-\frac{1}{\lambda^*})^{-1}\left[\exp^*(\partial_\lambda{{\rm BK}}_1),\omega^*_{\mathscr A}\right]\\
\notag&=(1-\frac{1}{\lambda})(1-\frac{1}{\lambda^*})^{-1}\frac{\left[\exp^*(\partial_\lambda{{\rm BK}}_1),\log_V(\res_p({\rm BK}_1))\right]}{\log_A(\res_p({\rm BK}_1))}\\
\notag&=-(1-\frac{1}{\lambda})(1-\frac{1}{\lambda^*})^{-1}\,\frac{h_{\lambda,\QQ}^{\rm Nek}\left({\rm BK}_1,{\rm BK}_1\right)}{\log_A(\res_p({\rm BK}_1))}.
\end{align}
Here:
\begin{itemize}
\item The first equality follows from Perrin-Riou's $p$-adic Gross--Zagier formula if $\lambda =\alpha$, or else it is Corollary~\ref{cor:padicGZoverQIntro}.
\item The second equality follows from the definition of $\mathfrak{Log}_{V,\lambda}$ and the fact that it maps Beilinson-Kato class (determined with the choice $\Omega_f^\pm=\Omega_A^\pm$, as in \S\ref{subsec_PRbigLog_padicLBK}) to $\mathscr{D}_{f^\lambda_{/\QQ}}$ (Theorem~\ref{thm:katohansen}); as well as the definition of the \emph{derived Beilinson-Kato class} 
$$\partial_\lambda\mathbb{BK}_1 \in H^1_\Iw(\widetilde{\DD}_\lambda)$$ 
which is given within the proof of Theorem~2.4 in \cite{kbbIwasawa2017}.
\item The element 
$$\partial_\lambda{{\rm BK}}_1\in H^1_{/\rm f}(\QQ_p,V):=H^1(\QQ_p,V)/H^1_{\rm f}(\QQ_p,V)$$ 
is the projection of the derived Beilinson-Kato class $\partial_\lambda\mathbb{BK}_1$ under the natural map 
$${\rm pr}_0\,:\, H^1_\Iw(\widetilde{\DD}_\lambda)\lra H^1_{/\rm f}(\QQ_p,V)$$ and the third equality follows from the explicit reciprocity laws of Perrin-Riou (as proved by Colmez) (c.f. the discussion in \cite[Section 2.1]{kbbleiintegralMC}). 
\item Fourth and fifth equalities follow from definitions (and using the fact that $\lambda\lambda^*=p$). 
\item The final equality follows from the Rubin-style formula proved in \cite[Theorem 4.13]{BenoisBuyukbodukExceptionalPR} and the comparison of various $p$-adic heights summarized in the diagram \eqref{eqn:comaprisonofheights}.
\end{itemize}
Recalling from \eqref{eqn_P_vs_Pf_final_section} the definition of the rational multiplier $d$ and that $d(f)=d^2c(f)$, we infer that
\begin{equation}
\label{eqn:BKcalculation}
\frac{h_{\lambda,\QQ}^{\rm Nek}\left({\rm BK}_1,{\rm BK}_1\right)}{\log_A(\res_p({\rm BK}_1))}=-(1-1/\alpha)(1-1/\beta)\cdot d(f)\cdot h_{\lambda,\QQ}^{\rm Nek}(P,P)\,.
\end{equation}
It follows from the fact that $h_{\lambda,\QQ}^{\rm Nek}(\ast,\ast)$ and $\left(\log_A\circ\,\res_p\,(\ast)\right)^2$ are both non-trivial quadratic forms on  the one dimensional $\QQ_p$-vector space $A(\QQ)\otimes\QQ_p$, combined with \eqref{eqn:BKcalculation} that 
$$\frac{h_{\lambda,\QQ}^{\rm Nek}(P,P)}{\log_A(\res_p(P))^2}=\frac{h_{\lambda,\QQ}^{\rm Nek}({\rm BK}_1,{\rm BK}_1)}{\log_A(\res_p({\rm BK}_1))^2}=-(1-1/\alpha)(1-1/\beta)\cdot d(f)\cdot \frac{h_{\lambda,\QQ}^{\rm Nek}(P,P)}{\log_{A}(\res_p({\rm BK}_1))}\,.$$
\qed

\bibliographystyle{amsalpha}
\bibliography{references}

\providecommand{\bysame}{\leavevmode\hbox to3em{\hrulefill}\thinspace}
\providecommand{\MR}{\relax\ifhmode\unskip\space\fi MR }
\providecommand{\MRhref}[2]{%
  \href{http://www.ams.org/mathscinet-getitem?mr=#1}{#2}
}
\providecommand{\href}[2]{#2}
\begin{thebibliography}{{Maz}78}

\bibitem[AI21]{AndreattaIovita3Var}
Fabrizio Andreatta and Adrian Iovita, \emph{Triple product {$p$}-adic
  {$L$}-functions associated to finite slope {$p$}-adic families of modular
  forms}, Duke Math. J. \textbf{170} (2021), no.~9, 1989--2083. \MR{4278669}

\bibitem[BB17]{BenoisBuyukbodukExceptionalPR}
Denis Benois and K{a}z{\i}m B\"uy\"ukboduk, \emph{{O}n the exceptional zeros of
  $p$-non-ordinary $p$-adic {$L$}-functions and a conjecture of
  {P}errin-{R}iou}, $\geq$ 2017, \emph{to appear}.

\bibitem[BB21]{BenoisBuyukbodukCKPRVolume}
\bysame, \emph{{I}nterpolation of {B}eilinson--{K}ato elements and $p$-adic
  {$L$}-functions}, 2021, To appear in Annales Mathematiques du Quebec (Special
  birthday issue for Bernadette Perrin-Riou), arXiv:2008.12536.

\bibitem[BBV16]{BertiBertoliniVenerucci}
Andrea Berti, Massimo Bertolini, and Rodolfo Venerucci, \emph{Congruences
  between modular forms and the {B}irch and {S}winnerton-{D}yer conjecture},
  Elliptic curves, modular forms and {I}wasawa theory, Springer Proc. Math.
  Stat., vol. 188, Springer, Cham, 2016, pp.~1--31. \MR{3629647}

\bibitem[BDV21]{BDV}
Massimo Bertolini, Henri Darmon, and Rodolfo Venerucci, \emph{{A} conjecture of
  {P}errin-{R}iou}, 2021, Advances in Math., to appear.

\bibitem[BE10]{BreuilEmerton2010}
Christophe Breuil and Matthew Emerton, \emph{Repr\'esentations {$p$}-adiques
  ordinaires de {${\rm GL}_2(\bold Q_p)$} et compatibilit\'e local-global},
  Ast\'erisque (2010), no.~331, 255--315. \MR{2667890}

\bibitem[Bel12]{bellaichepadicL}
Jo\"el Bella{\"\i}che, \emph{Critical {$p$}-adic {$L$}-functions}, Invent.
  Math. \textbf{189} (2012), no.~1, 1--60. \MR{2929082}

\bibitem[Ben14]{Ben14b}
Denis Benois, \emph{$p$-adic heights and $p$-adic {H}odge theory}, 2014,
  preprint, arXiv:1412.7305v1.

\bibitem[Ber02]{Ber02}
Laurent Berger, \emph{Repr\'esentations {$p$}-adiques et \'equations
  diff\'erentielles}, Invent. Math. \textbf{148} (2002), no.~2, 219--284.
  \MR{1906150}

\bibitem[BFH90]{BumpFriedbergHoffstein}
Daniel Bump, Solomon Friedberg, and Jeffrey Hoffstein, \emph{Nonvanishing
  theorems for {$L$}-functions of modular forms and their derivatives}, Invent.
  Math. \textbf{102} (1990), no.~3, 543--618. \MR{1074487}

\bibitem[BL17]{kbbleiintegralMC}
K\^az{\i}m {B\"uy\"ukboduk} and Antonio {Lei}, \emph{{Integral Iwasawa theory
  of Galois representations for non-ordinary primes.}}, {Math. Z.} \textbf{286}
  (2017), no.~1-2, 361--398 (English).

\bibitem[BL20]{BuyukbodukLeiFunctionalEquation}
K\^{a}z{\i}m B\"{u}y\"{u}kboduk and Antonio Lei, \emph{Functional equation for
  {$p$}-adic {R}ankin-{S}elberg {$L$}-functions}, Ann. Math. Qu\'{e}.
  \textbf{44} (2020), no.~1, 9--25. \MR{4071870}

\bibitem[BL21]{BLGHCpatching}
K{\^a}z{\i}m B\"uy\"ukboduk and Antonio Lei, \emph{{I}nterpolation of
  {G}eneralized {H}eegner {C}ycles in {C}oleman {F}amilies}, 2021, Journal of
  LMS, doi: 10.1112/jlms.12471.

\bibitem[BPS19]{BPSII}
K{\^a}z{\i}m B\"uy\"ukboduk, Robert Pollack, and Shu Sasaki, \emph{{B}ig
  {H}eegner cycles with coefficients in {E}merton's completed cohomology},
  $\geq$ 2019.

\bibitem[B{\"u}y20]{kbbIwasawa2017}
K\^az{\i}m B{\"u}y{\"u}kboduk, \emph{{Beilinson-Kato and Beilinson-Flach
  elements, Coleman-Rubin-Stark classes, Heegner points and a conjecture of
  Perrin-riou}}, Development of Iwasawa theory -- the centennial of K.
  Iwasawa's birth. Proceedings of the international conference ``Iwasawa
  2017'', University of Tokyo, Tokyo, Japan, July 19--28, 2017, Tokyo:
  Mathematical Society of Japan, 2020, pp.~141--193 (English).

\bibitem[Col97]{coleman}
Robert~F. Coleman, \emph{{$p$}-adic {B}anach spaces and families of modular
  forms}, Invent. Math. \textbf{127} (1997), no.~3, 417--479. \MR{1431135}

\bibitem[{Del}71]{Deligne1971Bourbaki}
Pierre {Deligne}, \emph{{Formes modulaires et repr\'esentations
  $\ell$-adiques.}}, {S\'emin. Bourbaki 1968/69, No.355, 139-172 (1971).},
  1971.

\bibitem[Dis19]{DisegniUniversal}
Daniel Disegni, \emph{{T}he universal $p$-adic {G}ross-{Z}agier formula},
  $\geq$2019, preprint,
  \url{https://disegni-daniel.perso.math.cnrs.fr/univ.pdf}.

\bibitem[Eme04]{EmertonTheta}
Matthew Emerton, \emph{{A} $p$-adic variational {H}odge {C}onjecture and
  modular forms with complex multiplication}, 2004,
  \url{http://www.math.uchicago.edu/~emerton/pdffiles/cm.pdf}.

\bibitem[GV04]{ghatevatsal2004AIF}
Eknath Ghate and Vinayak Vatsal, \emph{On the local behaviour of ordinary
  {$\Lambda$}-adic representations}, Ann. Inst. Fourier (Grenoble) \textbf{54}
  (2004), no.~7, 2143--2162 (2005).

\bibitem[GZ86]{GrossZagier1986}
Benedict~H. Gross and Don~B. Zagier, \emph{Heegner points and derivatives of
  {$L$}-series}, Invent. Math. \textbf{84} (1986), no.~2, 225--320. \MR{833192}

\bibitem[Han15]{hansenBSP}
David Hansen, \emph{{I}wasawa theory of overconvergent modular forms, {I}:
  {C}ritical $p$-adic {$L$}-functions}, 2015, Preprint, 30 pages,
  http://arxiv.org/abs/1508.03982.

\bibitem[Her98]{H1}
Laurent Herr, \emph{Sur la cohomologie galoisienne des corps {$p$}-adiques},
  Bull. Soc. Math. France \textbf{126} (1998), no.~4, 563--600. \MR{1693457}

\bibitem[How07]{how2}
Benjamin Howard, \emph{{Variation of Heegner points in Hida families.}},
  Invent. Math. \textbf{167} (2007), no.~1, 91--128 (English).

\bibitem[JLZ19]{JLZHC}
Dimitar Jetchev, David Loeffler, and Sarah Zerbes, \emph{{H}eegner points in
  {C}oleman families}, $\geq$2019, arXiv:1906.09196.

\bibitem[JSW17]{JetchevSkinnerWan}
Dimitar Jetchev, Christopher Skinner, and Xin Wan, \emph{The {B}irch and
  {S}winnerton-{D}yer formula for elliptic curves of analytic rank one}, Camb.
  J. Math. \textbf{5} (2017), no.~3, 369--434. \MR{3684675}

\bibitem[Kat04]{kato04}
Kazuya Kato, \emph{{$p$}-adic {H}odge theory and values of zeta functions of
  modular forms}, Ast\'erisque (2004), no.~295, ix, 117--290, Cohomologies
  $p$-adiques et applications arithm{\'e}tiques. III.

\bibitem[KLZ17]{KLZ2}
Guido Kings, David Loeffler, and Sarah~Livia Zerbes, \emph{Rankin-{E}isenstein
  classes and explicit reciprocity laws}, Camb. J. Math. \textbf{5} (2017),
  no.~1, 1--122. \MR{3637653}

\bibitem[Kob12]{KobayashiGZ2}
Shinichi Kobayashi, \emph{On the {$p$}-adic {G}ross-{Z}agier formula for
  elliptic curves at supersingular primes}, Algebraic number theory and related
  topics 2010, RIMS K\^oky\^uroku Bessatsu, B32, Res. Inst. Math. Sci. (RIMS),
  Kyoto, 2012, pp.~61--79. \MR{2986918}

\bibitem[Kob13]{KobayashiGZ}
\bysame, \emph{The {$p$}-adic {G}ross-{Z}agier formula for elliptic curves at
  supersingular primes}, Invent. Math. \textbf{191} (2013), no.~3, 527--629.
  \MR{3020170}

\bibitem[Kob21]{kobayashi_higherweight_nonord_GZ}
\bysame, \emph{{T}he $p$-adic {G}ross-{Z}agier formula for higher weight
  modular forms at non-ordinary primes}, $\geq$2021, in preparation.

\bibitem[KP07]{KP}
Masato Kurihara and Robert Pollack, \emph{Two {$p$}-adic {$L$}-functions and
  rational points on elliptic curves with supersingular reduction},
  {$L$}-functions and {G}alois representations, London Math. Soc. Lecture Note
  Ser., vol. 320, Cambridge Univ. Press, Cambridge, 2007, pp.~300--332.
  \MR{2392358}

\bibitem[KPX14]{KPX}
Kiran~S. Kedlaya, Jonathan Pottharst, and Liang Xiao, \emph{Cohomology of
  arithmetic families of {$(\varphi,\Gamma)$}-modules}, J. Amer. Math. Soc.
  \textbf{27} (2014), no.~4, 1043--1115. \MR{3230818}

\bibitem[Liu08]{Liu}
Ruochuan Liu, \emph{Cohomology and duality for {$(\phi,\Gamma)$}-modules over
  the {R}obba ring}, Int. Math. Res. Not. IMRN (2008), no.~3, Art. ID rnm150,
  32. \MR{2416996}

\bibitem[LLZ15]{LLZ2}
Antonio Lei, David Loeffler, and Sarah~Livia Zerbes, \emph{Euler systems for
  modular forms over imaginary quadratic fields}, Compos. Math. \textbf{151}
  (2015), no.~9, 1585--1625.

\bibitem[{Loe}18]{LoefflerpadicRankinSelberg}
David {Loeffler}, \emph{{A note on \(p\)-adic Rankin-Selberg \(L\)-functions}},
  {Can. Math. Bull.} \textbf{61} (2018), no.~3, 608--621 (English).

\bibitem[LZ16]{LZ1}
David Loeffler and Sarah~Livia Zerbes, \emph{Rankin-{E}isenstein classes in
  {C}oleman families}, Res. Math. Sci. \textbf{3} (2016), Paper No. 29, 53.

\bibitem[{Maz}78]{mazur78}
B.~{Mazur}, \emph{{Rational isogenies of prime degree. (With an appendix by D.
  Goldfeld).}}, {Invent. Math.} \textbf{44} (1978), 129--162 (English).

\bibitem[Nek93]{Ne92}
Jan Nekov{\'a}\v{r}, \emph{On {$p$}-adic height pairings}, S\'eminaire de
  {T}h\'eorie des {N}ombres, {P}aris, 1990--91, Progr. Math., vol. 108,
  Birkh\"auser Boston, Boston, MA, 1993, pp.~127--202. \MR{1263527}

\bibitem[{Nek}95]{NekovarGZ}
Jan {Nekov\'a\v{r}}, \emph{{On the $p$-adic height of Heegner cycles.}}, {Math.
  Ann.} \textbf{302} (1995), no.~4, 609--686 (English).

\bibitem[{Nek}00]{Nekovar2000AJHeightsBanff1998}
\bysame, \emph{{$p$-adic Abel-Jacobi maps and $p$-adic heights.}}, {The
  arithmetic and geometry of algebraic cycles. Proceedings of the CRM summer
  school, Banff, Alberta, Canada, June 7--19, 1998. Vol. 2}, Providence, RI:
  American Mathematical Society (AMS), 2000, pp.~367--379 (English).

\bibitem[Nek06]{Ne06}
Jan Nekov\'{a}\v{r}, \emph{Selmer complexes}, Ast\'erisque (2006), no.~310,
  viii+559. \MR{2333680}

\bibitem[Och18]{OchiaiIwasawa2018}
Tadashi Ochiai, \emph{{I}wasawa {M}ain {C}onjecture for $p$-adic families of
  elliptic modular cuspforms}, 2018, arXiv:1802.06427.

\bibitem[Oht99]{ohta99}
Masami Ohta, \emph{{Ordinary $p$-adic \'etale cohomology groups attached to
  towers of elliptic modular curves.}}, {Compos. Math.} \textbf{115} (1999),
  no.~3, 241--301.

\bibitem[Pot13]{Pot13}
Jonathan Pottharst, \emph{Analytic families of finite-slope {S}elmer groups},
  Algebra Number Theory \textbf{7} (2013), no.~7, 1571--1612. \MR{3117501}

\bibitem[PR93]{PerrinRiou93Rubinsformula}
Bernadette Perrin-Riou, \emph{Fonctions {$L$} {$p$}-adiques d'une courbe
  elliptique et points rationnels}, Ann. Inst. Fourier (Grenoble) \textbf{43}
  (1993), no.~4, 945--995. \MR{1252935 (95d:11081)}

\bibitem[PR94]{PRLocalIwasawa1994}
\bysame, \emph{Th\'eorie d'{I}wasawa des repr\'esentations {$p$}-adiques sur un
  corps local}, Invent. Math. \textbf{115} (1994), no.~1, 81--161.

\bibitem[PS11]{PS}
Robert Pollack and Glenn Stevens, \emph{Overconvergent modular symbols and
  {$p$}-adic {$L$}-functions}, Ann. Sci. \'{E}c. Norm. Sup\'{e}r. (4)
  \textbf{44} (2011), no.~1, 1--42. \MR{2760194}

\bibitem[Rub92]{Rubin92}
Karl Rubin, \emph{{$p$}-adic {$L$}-functions and rational points on elliptic
  curves with complex multiplication}, Invent. Math. \textbf{107} (1992),
  no.~2, 323--350. \MR{1144427}

\bibitem[{Sch}90]{Scholl1990Inventiones}
A.J. {Scholl}, \emph{{Motives for modular forms.}}, {Invent. Math.}
  \textbf{100} (1990), no.~2, 419--430 (English).

\bibitem[Sch02]{SchneiderBookNFA}
Peter Schneider, \emph{Nonarchimedean functional analysis}, Springer Monographs
  in Mathematics, Springer-Verlag, Berlin, 2002. \MR{1869547}

\bibitem[{Ser}68]{serre68}
Jean-Pierre {Serre}, \emph{{Abelian \(\ell\)-adic representations and elliptic
  curves. Written with the collaboration of Willem Kuyk and John Labute}},
  {McGill University Lecture Notes. New York-Amsterdam: W. A. Benjamin, Inc.,
  208 p. (1968).}, 1968.

\bibitem[{Ski}16]{skinnerPasificJournal2016}
Christopher {Skinner}, \emph{{Multiplicative reduction and the cyclotomic main
  conjecture for $\mathrm{GL}_2$.}}, {Pac. J. Math.} \textbf{283} (2016),
  no.~1, 171--200 (English).

\bibitem[SU14]{skinnerurbanmainconj}
Christopher Skinner and Eric Urban, \emph{The {I}wasawa main conjectures for
  {$\rm GL_2$}}, Invent. Math. \textbf{195} (2014), no.~1, 1--277.

\bibitem[Urb14]{urban}
Eric Urban, \emph{Nearly overconvergent modular forms}, Iwasawa {T}heory 2012:
  State of the Art and Recent Advances (Thanasis Bouganis and Otmar Venjakob
  (eds.), Contributions in Mathematical and Computational Sciences, vol. 7,
  Springer (2014), 401--441.

\bibitem[Vat99]{Vatsal_Periods_1999}
V.~Vatsal, \emph{Canonical periods and congruence formulae}, Duke Math. J.
  \textbf{98} (1999), no.~2, 397--419. \MR{1695203}

\bibitem[Wan15]{Wan2015ForumSigma}
Xin Wan, \emph{The {I}wasawa main conjecture for {H}ilbert modular forms},
  Forum Math. Sigma \textbf{3} (2015), e18, 95. \MR{3482263}

\bibitem[Wan21]{Wang2021}
Shan~Wen Wang, \emph{Le {S}yst\`eme d'{E}uler de {K}ato en {F}amillie ({II})},
  Acta Math. Sin. (Engl. Ser.) \textbf{37} (2021), no.~1, 173--204.
  \MR{4204541}

\bibitem[Zha14]{ZhangBSD}
Wei Zhang, \emph{Selmer groups and the indivisibility of {H}eegner points},
  Camb. J. Math. \textbf{2} (2014), no.~2, 191--253. \MR{3295917}

\end{thebibliography}

\end{document}